\documentclass{article}
\usepackage{amssymb,amsmath,bm}
\usepackage{amsthm}
\usepackage{hyperref}
\usepackage{mathrsfs}
\usepackage{textcomp}
\usepackage{enumerate}      
\usepackage{graphicx}        
\usepackage{caption}

\newcommand{\cp}[1]{\vcenter{\hbox{#1}}}
\newcommand{\C}{\mathcal C}
\newcommand{\R}{\mathbb R}
\newcommand{\T}{\mathcal T}
\newcommand{\X}{\mathcal{X}}
\newcommand{\g}{\gamma}
\newcommand{\s}{\Sigma}
\newcommand{\AS}{\mathcal{AS}}

\newtheorem{theorem}{Theorem}[section]
\newtheorem{lemma}[theorem]{Lemma}
\newtheorem{proposition}[theorem]{Proposition}
\newtheorem{definition}[theorem]{Definition}
\newtheorem{corollary}[theorem]{Corollary}
\newtheorem{conjecture}[theorem]{Conjecture}

\theoremstyle{remark}
\newtheorem{remark}[theorem]{Remark}

\theoremstyle{remark}
\newtheorem{example}[theorem]{Example}

\numberwithin{equation}{section}

\begin{document}

\title{\textbf{The skein algebra of arcs and links and the decorated Teichm\"uller space}
\bigskip}

\author{\medskip Julien Roger and Tian Yang}

\date{}

\maketitle
\begin{abstract} We define an associative $\mathbb{C}[[h]]$--algebra $\AS_h(\s)$ generated by framed arcs and links over a punctured surface $\s$ which is a quantization of the Poisson algebra $\C(\s)$ of arcs and curves on $\s$. We then construct a Poisson algebra homomorphism from $\C(\s)$ to the algebra of smooth functions on the decorated Teichm\"uller space endowed with the Weil-Petersson Poisson structure. The construction relies on a collection of geodesic lengths identities in hyperbolic geometry which generalize Penner's Ptolemy relation, the trace identities and Wolpert's cosine formula. As a consequence, we derive an explicit formula for the geodesic lengths functions in terms of the edge lengths of an ideally triangulated decorated hyperbolic surface.
\end{abstract}


\section{Introduction}

The skein module $\mathcal{S}_q(M)$ of a 3--manifold $M$ was introduced independently by Turaev \cite{Turaev1} and Przytycki \cite{Przytycki} as a generalization of the Jones polynomial of a link in $S^3$, using as a key ingredient the Kauffman bracket skein relation. If the 3--manifold is the product $\s\times [0,1]$ of a surface $\s$ by an interval, it has a natural structure of an algebra, and is at the heart of the combinatorial approach to constructing a TQFT developped in \cite{BHMV}. This construction in turn has had many applications in low dimensional topology (see \cite{FWW} for example) and is presumed to be equivalent to the geometric approach to TQFT coming from conformal theory (see the recent work of Andersen and Ueno \cite{AU}).

On the other hand, the skein algebra turns out to be deeply related to the $SL_2$--geometry of the undelying surface. More precisely, following the work of Turaev \cite{Turaev2}, Bullock, Frohman and Kania-Bartoszy\'nska\,\cite{Bu,BFK} and Przytycki and Sikora \cite{PS}, the skein algebra can be understood as a quantization of the $SL_2(\mathbb{C})$--character variety of $\s$.

The goal of the present paper is the following: First, we extend the notion of skein algebra for a punctured surface by allowing for framed arcs between the punctures. In order to show that this construction is well-defined we revisit arguments found in \cite{G,Przytycki} and extend them when necessary. Second, following the approach of \cite{BFK}, we relate our construction to geometric structures of $\s$. In our approach, the relevant object turns out to be the decorated Teichm\"uller space and the notion of $\lambda$--length introduced by Penner \cite{Penner1} . In order to establish a connection between the two constructions, we derive a series of identities in hyperbolic geometry which we hope will be of interest in themselves.

To motivate our approach let us first recall some of the steps in the construction of the skein algebra and its relationship to the character variety. Let $q$ be the formal power series $e^\frac{h}{4}\in\mathbb{C}[[h]]$. The skein algebra $\mathcal{S}_h(\s)$, introduced by Przytycki\,\cite{Przytycki} and Turaev\,\cite{Turaev1}, is the $\mathbb{C}[[h]]$--algebra generated by isotopy classes of framed links in $\s\times\left[0,1\right]$ subject to the Kauffman bracket skein relation
\[\cp{\includegraphics[width=1cm]{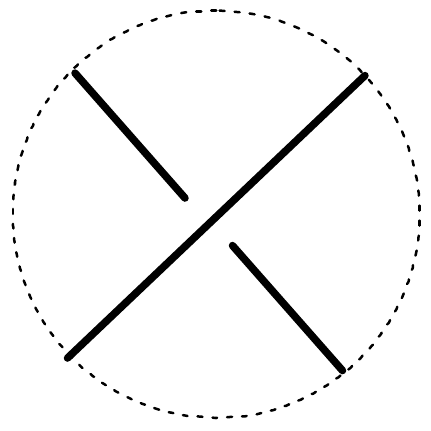}}\ =\ q\ \cp{\includegraphics[width=1cm]{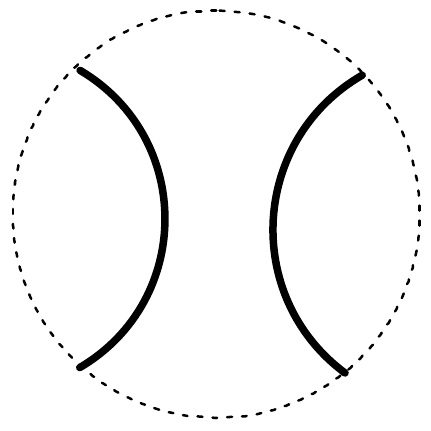}}\  +\ q^{-1}\ \cp{\includegraphics[width=1cm]{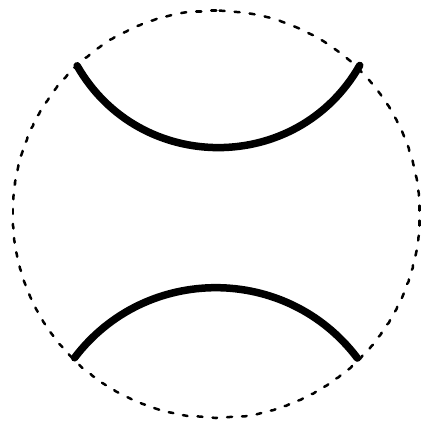}}\]
as well as the framing relation $\cp{\includegraphics[width=0.4cm]{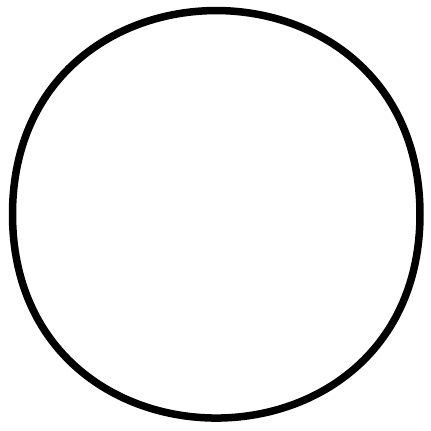}} = -q^2-q^{-2}$. In \cite{Turaev2}, Turaev studied the relationship between the skein algebra and the Lie algebra of curves on $\s$ introduced by Goldman\,\cite{G}. In turn, in the work of Goldman, the Lie bracket on curves is related to the Weil-Petersson Poisson structure on the $SL_2(\mathbb{C})$--character variety $\X(\s)$ of $\s$, that is, the space of conjugacy classes of representations $\rho\colon\pi_1(\s)\to SL_2(\mathbb{C})$. A direct relationship between the skein algebra and the character variety was described by Bullock\,\cite{Bu} who constructed a surjective homomorphism from the commutative algebra $\mathcal{S}_0(\s)$ to the coordinate ring $\mathbb{C}[\X(\s)]$ of $\X(\s)$. This map turns out to be an isomorphism by the work of Przytycki and Sikora\,\cite{PS} (see more recently \cite{CM} for a direct proof). Up to a sign, it assigns to each free homotopy class of curves $\g$ in $\s$ its trace function $tr_\g$ in $\mathbb{C}[\X(\s)]$, given by taking the trace of representations evaluated at $\g$. One of the key ingredients is then given by the trace identities which relate the product of traces of two intersecting curves with the traces of their resolutions at one point. These identities, in turn, come from the classical formula $tr A\cdot tr B=tr AB + tr AB^{-1}$ relating traces in $SL_2(\mathbb{C})$. Using this isomorphism, Bullock, Frohman and Kania-Bartoszy\'nska\,\cite{BFK} showed that the skein algebra is in fact a quantization of the character variety for the Goldman-Weil-Petersson bracket, in the sense of deformation of Poisson algebras. What this means is that $\mathcal{S}_0(\s)$, endowed with the Poisson bracket inherited from the commutator on $\mathcal{S}_h(\s)$, is isomorphic \emph{as a Poisson algebra} to $\mathbb{C}[\X(\s)]$.

Our goal is to extend this construction by introducing framed arcs in the definition of the skein algebra for a surface with punctures. We define a generalized framed link to be an embedding of a collection of annuli and strips in $\s\times [0,1]$, so that the ends of the strips are above the punctures (see Section~\ref{section:2} for a precise definition). A component given by a strip will be called a framed arc.
\[\includegraphics[width=3cm]{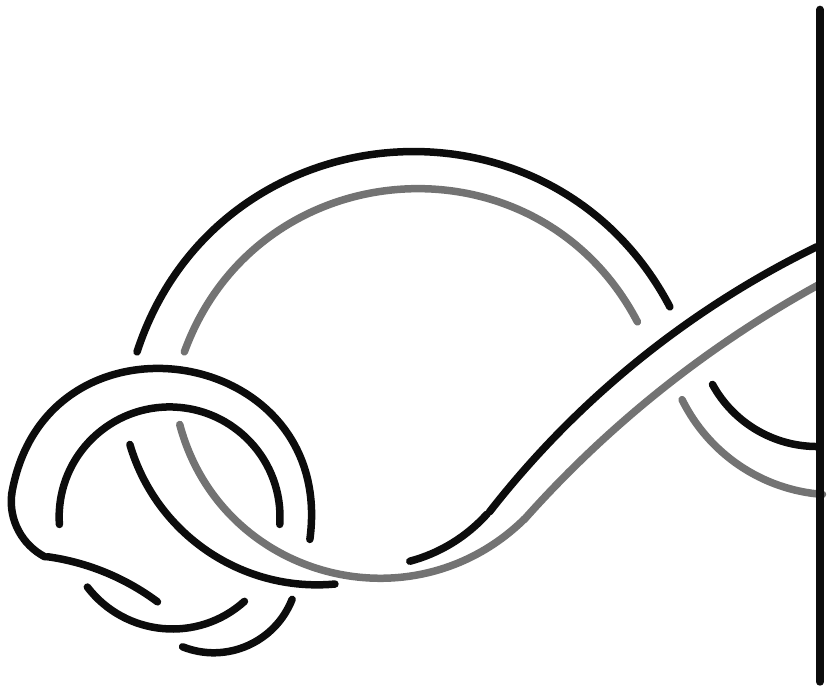}v\times [0,1]\]
The \emph{skein algebra of arcs and links} $\AS_h(\s)$ of $\s$ will then be generated by isotopy classes of generalized framed links. The usual skein relation still applies for crossings occurring above $\s$, where we allow some of the strands to be arcs. When two arcs meet at a puncture we introduce the so-called \emph{puncture-skein relation}
\[\cp{\includegraphics[width=1cm]{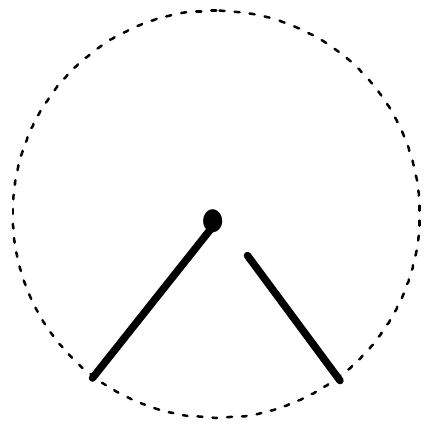}}\ =\ \frac{1}{v}\Big(q^{\frac{1}{2}}\ \cp{\includegraphics[width=1cm]{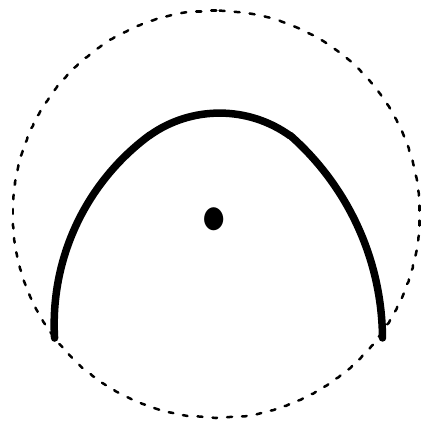}}\  +\ q^{-\frac{1}{2}}\ \cp{\includegraphics[width=1cm]{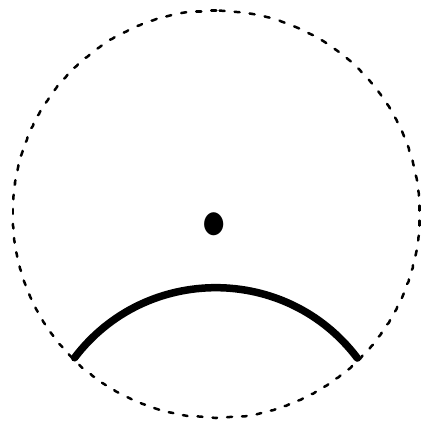}}\ \Big).\]
Here $v$ is a central element associated to the puncture which turns out to be essential when trying to interpret this relation geometrically. In addition, the framing relation still applies and we also impose the \emph{puncture relation} $\cp{\includegraphics[width=0.4cm]{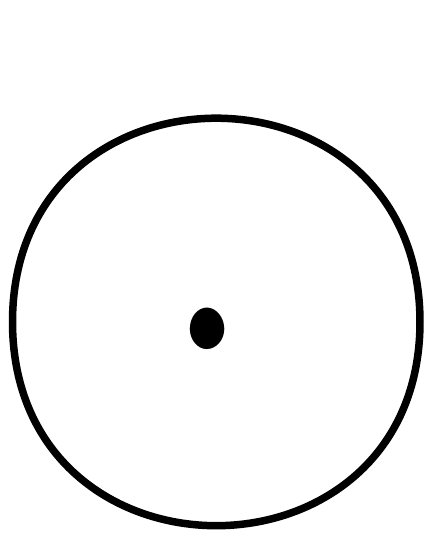}}\ =\ q+q^{-1}$.

In the non-quantum case, we consider the algebra $\C(\s)$ generated by arcs and loops on $\s$ itself subject to (classical) skein relations. It admits a Poisson bracket decribed in terms of resolutions of intersections inside the surface and at the punctures which generalizes Goldman's Lie bracket on loops. Using arguments similar to the ones in \cite{BFK}, we show that this bracket comes from the commutator in $\AS_h(\s)$, In other words
\begin{theorem}
\label{intro1}
$\AS_h(\s)$ is a quantization of $\C(\s)$.
\end{theorem}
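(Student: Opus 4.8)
The plan is to check the three defining properties of a deformation quantization: (i) $\AS_h(\s)$ is an associative $\mathbb{C}[[h]]$-algebra, topologically free as a $\mathbb{C}[[h]]$-module; (ii) there is an algebra isomorphism $\AS_h(\s)/h\,\AS_h(\s)\cong\C(\s)$; and (iii) under this isomorphism the bracket induced by the commutator, namely $\{a,b\}:=h^{-1}[\tilde a,\tilde b]\bmod h$ for any lifts $\tilde a,\tilde b\in\AS_h(\s)$, coincides with the combinatorial Poisson bracket on $\C(\s)$. Properties (i) and (ii) I would extract from the structural results of Section~\ref{section:2}: those results provide a $\mathbb{C}[[h]]$-basis of $\AS_h(\s)$ indexed by isotopy classes of crossingless diagrams of arcs and loops without trivial components (``reduced multicurves''), which yields topological freeness; and since setting $h=0$ sends $q$ and $q^{1/2}$ to $1$, the Kauffman and puncture-skein relations degenerate to their classical counterparts, so the $\mathbb{C}$-algebra $\AS_h(\s)/h\,\AS_h(\s)$ has precisely the presentation defining $\C(\s)$ and the tautological map is an isomorphism.

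For (iii) I would first observe that $\C(\s)$ is commutative: at $q=1$ neither the Kauffman relation nor the puncture-skein relation distinguishes the two strands at a crossing, so the height ordering of two diagrams in $\s\times[0,1]$ becomes irrelevant modulo $h$. Hence $[\tilde a,\tilde b]\in h\,\AS_h(\s)$ for all $a,b$, the recipe $\{a,b\}=h^{-1}[\tilde a,\tilde b]\bmod h$ does not depend on the chosen lifts (replacing $\tilde a$ by $\tilde a+h\tilde c$ changes the commutator by $h[\tilde c,\tilde b]\in h^2\,\AS_h(\s)$), and it automatically satisfies the Poisson axioms. Since both this bracket and the combinatorial bracket on $\C(\s)$ are $\mathbb{C}$-bilinear derivations, it suffices to compare them on pairs of reduced multicurves $\alpha,\beta$, which span $\C(\s)$.

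The main step is then a crossing-by-crossing resolution, following Bullock--Frohman--Kania-Bartoszy\'nska in the interior and using the puncture-skein relation at the punctures. I would isotope $\alpha$ and $\beta$ to meet transversally in finitely many points, each in the interior of $\s$ or at a puncture, with no triple points, and realize $\tilde a\tilde b$ by pushing $\alpha$ into $\s\times(\tfrac12,1)$ and $\beta$ into $\s\times(0,\tfrac12)$, and $\tilde b\tilde a$ by the opposite stacking. Switching the crossings one at a time, an interior switch changes the skein element by $(q-q^{-1})$ times the difference of the two Kauffman smoothings, while a switch at a puncture $p$ changes it, via the puncture-skein relation, by $v_p^{-1}(q^{1/2}-q^{-1/2})$ times the difference of the two smoothings there, and in both cases the leftover diagrams have strictly fewer $\alpha$--$\beta$ crossings. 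Using $q-q^{-1}=\tfrac{h}{2}+O(h^3)$ and $q^{1/2}-q^{-1/2}=\tfrac{h}{4}+O(h^3)$ and inducting on the number of crossings, one obtains
\[ \tilde a\,\tilde b-\tilde b\,\tilde a\ \equiv\ \frac{h}{2}\sum_{p\ \mathrm{interior}}(\alpha\cdot_p\beta)\ +\ \frac{h}{4}\sum_{p\ \mathrm{puncture}}\frac{1}{v_p}\,(\alpha\cdot_p\beta)\pmod{h^2}, \]
where $\alpha\cdot_p\beta$ denotes the signed resolution of $\alpha\cup\beta$ at $p$; dividing by $h$ and reducing modulo $h$ recovers exactly the combinatorial Poisson bracket on $\C(\s)$. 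The hard part will be making the induction rigorous to order $h^2$: one must check that resolving a crossing introduces only diagrams with fewer $\alpha$--$\beta$ crossings, that the order in which the crossings are switched does not affect the answer modulo $h^2$ (an independence-at-first-order argument), and---the feature with no analogue in the closed case---that at a puncture the puncture-skein relation reproduces the correct combinatorial resolution together with the central factor $v_p^{-1}$, which is precisely why $v$ was inserted into that relation.
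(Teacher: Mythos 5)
Your proposal is correct and follows essentially the same route as the paper: topological freeness and the isomorphism $\AS_h(\s)/h\AS_h(\s)\cong\C(\s)$ are taken from the structural results of Section~\ref{section:2} (the state basis and Proposition~\ref{prop:iso}), and the bracket comparison is the same local skein-resolution bookkeeping, with $q-q^{-1}=\tfrac{h}{2}+O(h^3)$ at interior crossings and $v^{-1}(q^{1/2}-q^{-1/2})=\tfrac{h}{4v}+O(h^3)$ at punctures matching the weights $\tfrac12$ and $\tfrac{1}{4v}$ in the Goldman bracket. The only organizational difference is that you switch the $\alpha$--$\beta$ crossings one at a time and telescope, whereas the proof of Theorem~\ref{dequantization} resolves both products into the common state basis and compares coefficients $q^{\pm 2n_{\mathcal{S}}}$ state by state, handling a curve with a self-intersection at a puncture as a separate case that your telescoping absorbs automatically.
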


The next step of our construction is to relate the algebra $\C(\s)$ to the $SL_2$--geometry of the surface $\s$, in the case when $\chi(\s)<0$ and the set of punctures $V$ is non-empty. In this context, the relevant object is the decorated Teichm\"uller space $\T^d(\s)$ introduced by Penner \,\cite{Penner1}. It is defined as a bundle over the usual Teichm\"uller space $\mathcal{T}(\s)$ with fiber $\mathbb{R}^V_{>0}$. Given a hyperbolic metric $m\in\mathcal{T}(\s)$, the choice of a point in the fiber corresponds to fixing the length of a horocycle at each of the punctures of $\s$. This assignment, in turn, permits to measure the length $l(\alpha)$ of each arc $\alpha$ between horocycles. A more relevant quantity in our context is the $\lambda$-length of $\alpha$ given by $\lambda(\alpha)=e^\frac{l(\alpha)}{2}$. They satisfy the well-known Ptolemy relation
\[\lambda(e)\lambda(e')=\lambda(a)\lambda(c)+\lambda(b)\lambda(d)\]
where $a$, $b$, $c$, $d$ are the consecutive edges of a square and $e$ and $e'$ are its diagonals. Graphically, the Ptolemy relation can the be rewritten:
\[\cp{\includegraphics[width=1.4cm]{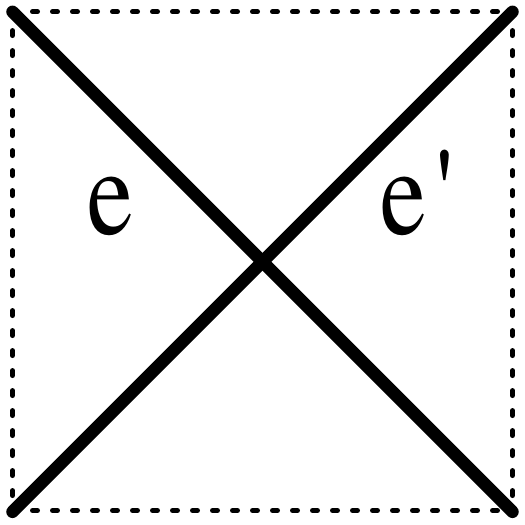}}\ \ =\ \ \cp{\includegraphics[width=1.4cm]{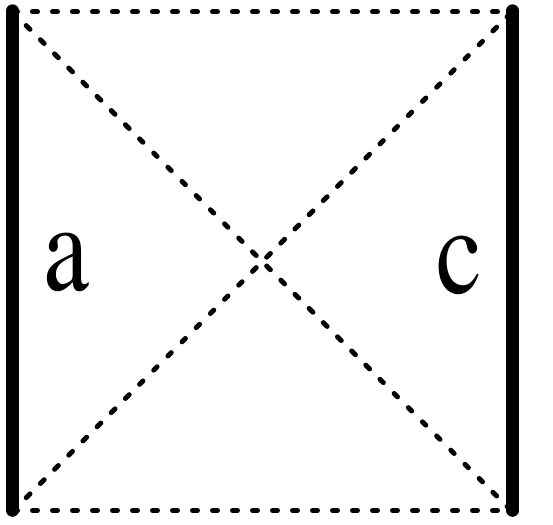}}\ \ +\ \ \cp{\includegraphics[width=1.4cm]{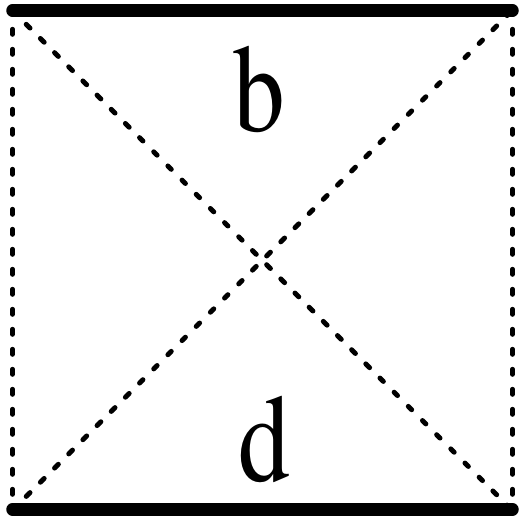}}\ \]
which we want to interpret as a (non-quantum) skein relation.

Using these notions, we obtain the following theorem.
\begin{theorem}
\label{intro2}
There is a well-defined homomorphism of Poisson algebras
\[\Phi\colon\C(\s)\to C^\infty(\mathcal{T}^d(\s))\,.\]
\end{theorem}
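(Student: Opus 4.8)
The plan is to define $\Phi$ on generators and check that it respects the defining relations of $\C(\s)$ as well as the Poisson bracket. On a loop $\g$, I would set $\Phi(\g) = -\operatorname{tr}_\g$ (minus the trace function of the holonomy, which descends to a function on $\T^d(\s)$ via the covering $\T^d(\s)\to\T(\s)$), matching the classical Bullock map. On an arc $\alpha$ between punctures, I would set $\Phi(\alpha) = \lambda(\alpha)$, the $\lambda$-length measured against the horocyclic decoration; this is the content of wanting to read the Ptolemy relation as a skein relation. One then extends multiplicatively to products of disjoint components. The first task is well-definedness: check that $\Phi$ kills the classical skein relation for interior crossings (this is the classical trace identity $\operatorname{tr}A\operatorname{tr}B = \operatorname{tr}AB + \operatorname{tr}AB^{-1}$, exactly as in \cite{Bu,BFK}), that it kills the classical puncture-skein relation (this is where a \emph{generalized Ptolemy / $\lambda$-length identity} at a puncture is needed — presumably one of the ``geodesic lengths identities'' advertised in the abstract), the framing relation $\Phi(\text{trivial loop}) = -2$, and the puncture relation $\Phi(\text{loop around a puncture}) = 2$. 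The latter two are elementary: a contractible loop has holonomy $\pm I$ so trace $\pm 2$, and a small loop around a puncture is parabolic with trace $\pm 2$; the signs are fixed by the same sign conventions that make Bullock's map an algebra map.

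Next I would verify that $\Phi$ is a homomorphism of Poisson algebras, i.e. $\Phi\{x,y\} = \{\Phi(x),\Phi(y)\}_{WP}$ on generators. There are three cases according to how the supports of two generators meet. For two loops meeting transversally in the interior this is Goldman's theorem together with Wolpert's formula, reproved here as one of the identities (the ``trace identities and Wolpert's cosine formula'' mentioned in the abstract). For a loop and an arc, or two arcs, meeting in the interior, the bracket is again computed by resolving the crossing, and one needs the derivative of a trace or a $\lambda$-length along the Hamiltonian flow of the other; this should follow from the infinitesimal form of the corresponding length identity. The genuinely new case is two arcs sharing an endpoint at a puncture, where the Poisson bracket of $\C(\s)$ is defined via the puncture-resolution: here one must show that the Weil–Petersson bracket of two $\lambda$-lengths equals (up to the expected constant) the signed sum of $\lambda$-lengths of the two puncture-resolutions. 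Penner's computation of the Weil–Petersson form in $\lambda$-length (Darboux-type) coordinates, differentiated, should yield exactly this — and indeed Wolpert's cosine formula ought to emerge as the special case where the two arcs are pieces of a single closed geodesic. It suffices to check all of this on a triangulated surface, since the $\lambda$-lengths of the edges of one ideal triangulation are global coordinates on $\T^d(\s)$ and the Ptolemy relation lets one express every arc in these coordinates.

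Since $\C(\s)$ is generated by arcs and loops and the relations are the skein, framing, and puncture relations, once $\Phi$ is shown to respect all of these it is a well-defined algebra map, and once it is shown to intertwine the brackets on generators it is automatically a Poisson map (the bracket being a biderivation). The main obstacle, and the technical heart of the paper, is establishing the two new $\lambda$-length identities: the \emph{static} one (generalized Ptolemy at a puncture, making $\Phi$ well-defined) and the \emph{infinitesimal} one (the Weil–Petersson bracket of two $\lambda$-lengths at a shared puncture). These require careful hyperbolic-trigonometric computations with horocycle segments — lifting the configuration to $\mathbb H^2$, choosing normalizations (e.g. sending a horocycle to a horizontal line in the upper half-plane), and expressing $\lambda$-lengths as simple functions of the lifted endpoints — which is precisely the ``collection of geodesic lengths identities'' the abstract promises; everything else is a bookkeeping reprise of \cite{Bu,BFK}. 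A secondary subtlety is tracking signs and the central elements $v$ consistently, so that the classical limit of the puncture-skein relation (with $v\mapsto 1$) matches the Ptolemy relation with the correct orientation conventions.
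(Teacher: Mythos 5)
Your outline captures the broad strategy of the paper (generators go to generalized traces and $\lambda$--lengths, the relations are killed by hyperbolic length identities including a new Ptolemy-type identity at a puncture, and the Poisson check is reduced to the edge coordinates of an ideal triangulation), but it has a genuine gap at the level of well-definedness. The algebra $\C(\s)$ is generated by \emph{regular} isotopy classes: Reidemeister Move I is not allowed. Resolving the crossing of a curl shows that in $\C(\s)$ a curve $\alpha$ with one curl satisfies $\alpha=\alpha_0\cdot\bigcirc+\alpha_0=-2\alpha_0+\alpha_0=-\alpha_0$, where $\alpha_0$ is the uncurled curve. Since $\alpha$ and $\alpha_0$ are homotopic, any assignment that depends only on the homotopy class (such as $-\mathrm{tr}_\gamma$ or $\lambda(\alpha)$) sends both to the same function and therefore does \emph{not} descend to $\C(\s)$. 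This is why the paper introduces the curling number $c(\alpha)$ and sets $\Phi(\alpha)=(-1)^{c(\alpha)}\lambda(\alpha)$, and why a nontrivial part of the proof is controlling how $c$ behaves under resolutions (Lemma~\ref{lemma:nonzero}) and running an induction on the number of intersection points to pass from geodesic representatives to arbitrary regular isotopy classes with curls and bigons (Propositions~\ref{prop:induction} and \ref{prop:puncture}). Related to this, your verification of the skein relation only addresses crossings between two distinct components (the identity $\mathrm{tr}A\,\mathrm{tr}B=\mathrm{tr}AB+\mathrm{tr}AB^{-1}$); the relations also apply to self-crossings of a single arc or loop, both in $\s$ and at a puncture, and there the required identities are different (Lemmas~\ref{3.6}--\ref{3.8}), with the sign-sensitive cases forcing the ``twisted'' generalized triangle cosine laws of Appendix~\ref{A}. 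Incidentally, with the paper's unsigned classical skein relation the correct value on a geodesic loop is $+2\cosh\frac{l}{2}$, not $-\mathrm{tr}$; the minus signs enter only through $(-1)^{c(\alpha)}$.

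A second concrete error is your suggestion that the central elements should be specialized via $v\mapsto 1$. The punctures are generators of $\C(\s)$ and must be sent to the horocycle lengths, $\Phi(v)=r(v)$: the generalized Ptolemy identity at a puncture (Lemma~\ref{3.8} and Lemma~\ref{3.4}) carries the factor $\frac{1}{r(v)}$, exactly matching the $v^{-1}$ in the puncture-skein relation, and the compatibility $\{v,\alpha\}=0$ uses Mondello's description of the kernel of $\Pi_{WP}$ to show $r(v)$ is a Casimir; with $v\mapsto 1$ the puncture-skein relation would simply not be satisfied by $\lambda$--lengths. Finally, on the Poisson side your ``check on generators, then it is automatic'' step hides the real work: $\Pi_{WP}$ is only given in the length coordinates of a fixed triangulation, so the paper first proves the bracket formula for pairs of edges, then uses Lemma~\ref{product} (every curve times a suitable monomial in the edges is a polynomial in the edges) together with the Leibniz rule to bootstrap to arbitrary pairs of generalized curves, rather than computing $\Pi_{WP}(\lambda(\alpha),\lambda(\beta))$ directly for arbitrary curves.
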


Up to signs, this map sends loops to their trace functions, arcs to their $\lambda$-lengths and punctures to horocycle lengths around them. The Poisson structure on $\mathcal{T}^d(\s)$ is an extension of the usual Weil-Petersson Poisson bracket on $\mathcal{T}(\s)$ and was described by Mondello\,\cite{Mondello}. The proof of the theorem relies on a collection of length identities which generalize Penner's Ptolemy relation, the trace identities and Wolpert's cosine formula\,\cite{Wolpert} for the Poisson bracket of two trace functions. These identities are derived in turn from a set of ``cosine laws'' which can be found in the Appendix A of \cite{GL} by Guo and Luo.

Combining Theorems~\ref{intro1} and \ref{intro2}, it is tempting to interpret $\AS_h(\s)$ as a quantization of the decorated Teichm\"uller space. Following \cite{Bu} and \cite{PS}, we first conjecture that the homomorphism $\Phi$ in the theorem above is injective. Another important step would then be to understand what is the correct ``algebra of functions'' on $\T^d(\s)$. In their work, Bullock, Frohman and Kania-Bartoszy\'nska make use of the fact that the character variety is an algebraic variety, and as such they are lead to use its coordinate ring $\mathbb{C}[\X(\s)]$. This choice is made natural by the fact that $\mathbb{C}[\X(\s)]$ is generated by trace functions. In our context, the image of the homomorphism $\Phi$ is essentially the subalgebra generated by trace functions and $\lambda$--lengths. A natural question to ask is then if $\T^d(\s)$ has a natural structure of an algebraic variety for which its coordinate ring coincides with this subalgebra. An important observation which makes this approach sensible is the fact that the $\lambda$--lengths associated to the edges of an ideal triangulation form a coordinate system on $\T^d(\s)$ in which every trace function can be written as a Laurent polynomial (see Proposition~\ref{Laurent} and Appendix~\ref{B}). The way this fact translates at the level of the skein algebra $\AS_h(\s)$ however remains an intriguing problem.

The fact that the trace identity and the Ptolemy relation can be combined into generalized skein relations involving both arcs and curves has been used recently in works of Dupont and Palesi\,\cite{DP} and Musiker and Williams\,\cite{MW}, in the context of cluster algebras associated to triangulated surfaces. It would be interesting to see if our work applies to the context of quantum cluster algebras as defined by Berenstein and Zelevinsky\,\cite{BZ}. Closely related to these considerations is the construction of quantum trace functions in the context of the quantization of Teichm\"uller space. This problem was solved recently by Bonahon and Wong in \cite{BW1, BW2} using the skein relation in a crucial way. In turn, their construction is based on the use of shear coordinates\,\cite{Bo} which are closely related to $\lambda$-lengths. As such, we hope that our work could shed new light on the relationship between the skein algebra and the quantum Teichm\"uller space.
\medskip

\textbf{Acknowledgments:} We are grateful to M. Freedman, C. Frohman, J. Przytycki and Z-H.Wang for showing interest in this work and for their helpful comments. We would like also to thank F. Bonahon and F. Luo for their continuous support and for many conversations which have lead to this work. The second author is partly supported by an NSF research fellowship.

\section{Algebraic aspects}\label{section:2}

\subsection{The skein algebra of arcs and links}

We consider a surface $\s$ obtained from a closed oriented surface $\overline{\s}$ by removing a possibly empty finite subset $V$. Elements of $V$ will be called the \emph{punctures} of $\s$.

\begin{definition} A continuous map $\alpha=\coprod_i\alpha_i\sqcup\coprod_j l_j$ from a domain $D$ consisting of a finite collection of strips $\coprod_i\ [0,1]\times(-\epsilon,\epsilon)$ and annuli $\coprod_j S^1\times(-\epsilon,\epsilon)$ into $\overline{\s}\times [0,1]$ is called a \emph{generalized framed link} in $\s\times [0,1]$ if 
\begin{enumerate}[(1)]
\item $\alpha$ is an injection into $\overline{\s}\times (0,1)$;

\item each $l_j$ is an embedding into $\s\times[0,1]$;

\item the restriction of each $\alpha_i$ to ${(0,1)\times(-\epsilon,\epsilon)}$ is an embedding into $\s\times [0,1]$;

\item the restriction of each $\alpha_i$ to ${\{0,1\}\times(-\epsilon,\epsilon)}$ is an orientation preserving embedding into $V\times [0,1].$
\end{enumerate}
Two generalized framed links $\alpha$ and $\beta$ are \emph{isotopic} if there exists a continuous map $H\colon D\times[0,1]\rightarrow\overline{\s}\times[0,1]$ such that $H_0=\alpha$ and $H_1=\beta$, and $H_t$ is a generalized framed link for each $t\in(0,1)$. 
\end{definition}

Each strip $\alpha_i$ in a generalized link $\alpha$ will be called a \emph{(framed) arc}, with the understanding that such a component can be ``knotted''. Condition $(1)$ implies that each arc component of a generalized link ends at a different height above the punctures of $\s$. Condition $(4)$ prevents a framed arc from doing a ``half-twist'' between two punctures.

Some conventions are needed when considering a diagram of a generalized link projected onto $\s$. First, modulo an isotopy, we can assume that the framing of a generalized link always points in the vertical direction. On a link diagram this will correspond to the direction pointing toward the reader. Second, we use the usual convention to encode which strand of a generalized link passes over another in $\s\times[0,1]$ and we assume that the diagram only possesses ordinary double points in $\s$. However, we cannot impose this convention for intersections occurring at a puncture, since more than two arcs can meet at a puncture and such intersections cannot be resolved via an isotopy of $\s$. If two strands of arcs meet above a puncture, we consider the following diagram:
\[\includegraphics[width=1.5cm]{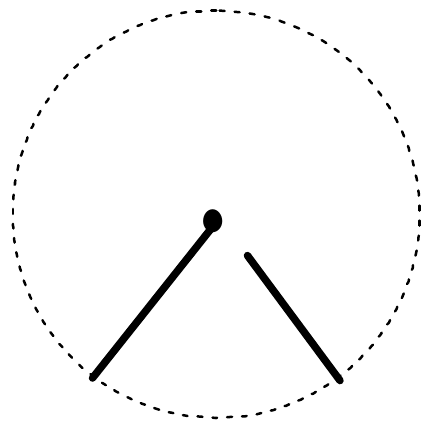}\]
Here the left strand ends above the right one at the puncture, and no other strand ends in between. We call such a configuration a \emph{consecutive crossing}. However, we do not necessarily sketch the strands ending above or below. In some cases, we need to study relations involving more than two strands ending at a puncture, then a picture such as this one,
\[\includegraphics[width=1.5cm]{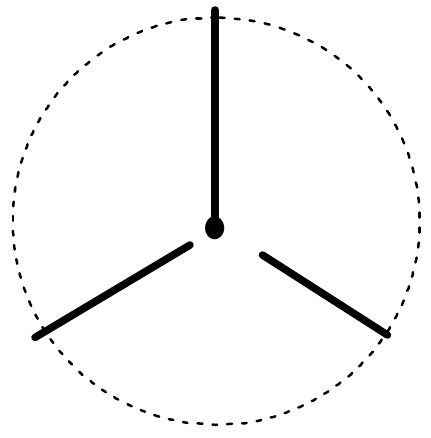}\]
will be supplemented with an explanation of the respective positions of the strands lying under the top one.

As is well known, two diagrams correspond to isotopic framed links if and only if one can be obtained from the other by a sequence of Reidemeister Moves II and III. This is also true in the case of a generalized link if we add the moves described in Figure~\ref{RII'}, which we will call \emph{Reidemeister  Moves II$'$}. 
\begin{figure}[htbp]\centering
\includegraphics[width=9cm]{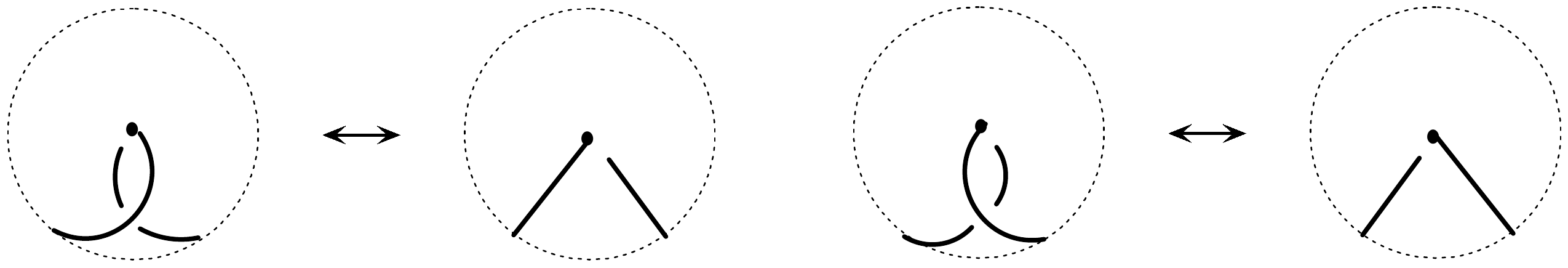}\\
\caption{Reidemeister Move II$'$.}\label{RII'}
\end{figure}
Indeed, this move is obtained by replacing one of the crossings in Reidemeister Move II by a crossings at a puncture. The only possible move obtained by replacing both of the crossings in Reidemeister Move II by crossings at punctures is a composition of Reidemeister Moves II and II$'$ as follows:
\[\includegraphics[width=7cm]{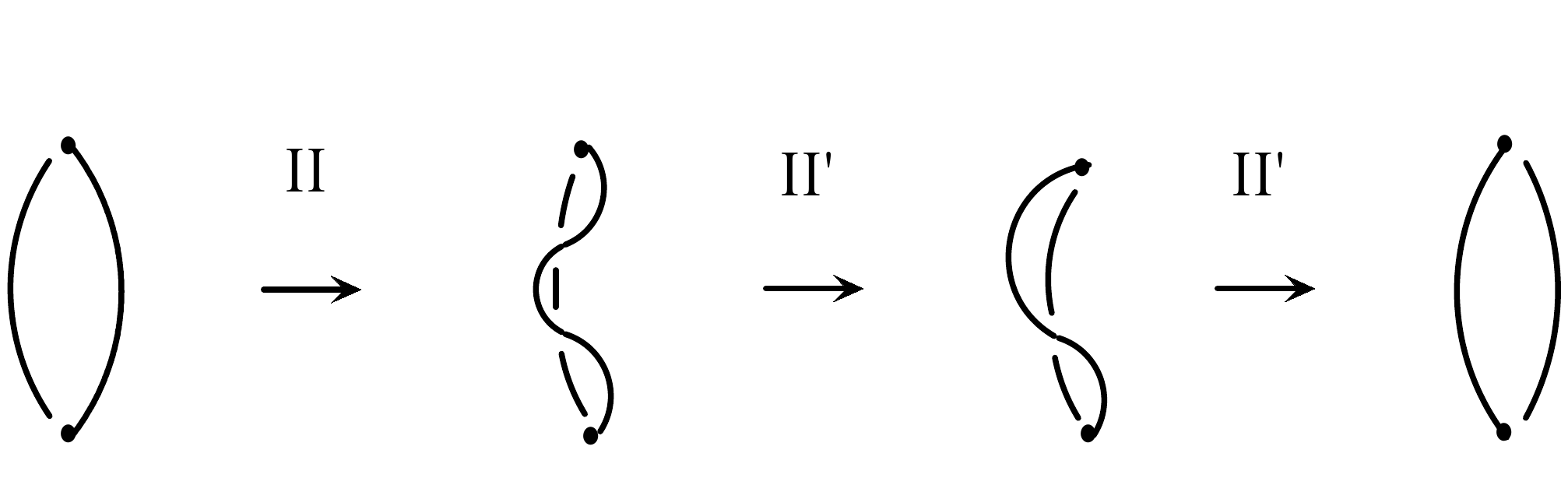}\]
There are no analogous moves to Reidemeister Move III, since a strand of a generalized link cannot be isotoped through a puncture.

Given two generalized links $\alpha$ and $\beta$ in $\s\times[0,1]$ we define the \emph{stacking of} $\alpha$ \emph{over} $\beta$ to be the link in $\s\times[0,1]$ obtained by rescaling $\alpha$ to $\s\times [\frac{1}{2},1]$ and $\beta$ to $\s\times[0,\frac{1}{2}]$ and taking there union. This operation is compatible with isotopies.

Let $\mathbb{C}[[h]]$ denote the ring of power series in $h$. We endow it with the $h$--adic topology and, following the approach of Bullock, Frohman and Kania-Bartoszy\'nska \cite{BFK} we will work over the category of topological $\mathbb{C}[[h]]$-modules. We refer to their paper as well as to the standard reference $\cite{Kassel}$ for details.

We let $\mathcal{L}$ be the set of isotopy classes of generalized framed links in $\s\times [0,1]$ together with the empty link and let $V^{\pm 1}$ be the set of punctures $v$ of $\Sigma$ and there formal inverses $v^{-1}$. We consider the $\mathbb{C}$--vector space $\mathbb{C}[\mathcal{L},V^{\pm1}]$ with basis $\mathcal{L}\cup V^{\pm1}$ and define a product on this space via
\begin{enumerate}[(1)]
\item the product $\alpha\cdot\beta$ of $\alpha$ and $\beta$ in $\mathcal{L}$ is obtained by stacking $\alpha$ over $\beta$;
\item the elements of $V^{\pm 1}$ are central and $v\cdot v^{-1}=1$ for each $v\in V$.
\end{enumerate}
The empty link is the identity for this operation.

We can then form the set $\mathbb{C}[\mathcal{L},V^{\pm1}][[h]]$ of formal power series with coefficients in $\mathbb{C}[\mathcal{L},V^{\pm1}]$ which inherits a natural structure of $\mathbb{C}[[h]]$-module. The multiplication on $\mathbb{C}[\mathcal{L},V^{\pm1}]$ extends naturally to $\mathbb{C}[\mathcal{L},V^{\pm1}][[h]]$ and turns it into a topological algebra.

We are now ready to introduce the main object of this article.

\begin{definition}\label{arcskein} Let $q$ be the formal power series $e^{\frac{h}{4}}\in\mathbb{C}[[h]]$. The \emph{skein algebra of arcs and links} $\AS_h(\s)$ is the quotient of $\mathbb{C}[\mathcal{L},V^{\pm 1}][[h]]$ by the sub--module generated by the following relations:
\begin{enumerate}[(1)]
\item \emph{\bf Kauffman Bracket Skein Relation:} For a crossing in the surface, we have
\begin{equation*}
\cp{\includegraphics[width=1cm]{crossing}}\ =\ q\ \cp{\includegraphics[width=1cm]{+crossing}}\  +\ q^{-1}\ \cp{\includegraphics[width=1cm]{ncrossing}}\ ;
\end{equation*}

\item \emph{\bf Puncture-Skein Relation:}  For a consecutive crossing at a puncture $v$, we have 
\begin{equation*}
\cp{\includegraphics[width=1cm]{v}}\ =\ \frac{1}{v}\Big(q^{\frac{1}{2}}\ \cp{\includegraphics[width=1cm]{+v}}\  +\ q^{-\frac{1}{2}}\ \cp{\includegraphics[width=1cm]{nv}}\Big);
\end{equation*}

\item \emph{\bf Framing Relation:} For the isotopy class of a trivial loop, we have
\begin{equation*}
\cp{\includegraphics[width=0.8cm]{trivial}}\ =\ -q^2-q^{-2};
\end{equation*} 

\item \emph{\bf Puncture Relation:} For the isotopy class of a circle around a puncture, we have
\begin{equation*}
\cp{\includegraphics[width=0.8cm]{curve-around-puncture}}\ =\ q+q^{-1}.
\end{equation*} 
\end{enumerate}
The multiplication $\cdot$ on $\AS_h(\s)$ is induced by the stacking operation on $\mathbb{C}[\mathcal{L},V^{\pm 1}][[h]]$.
\end{definition}

Some comments are in order to justify this definition. First, note that if $V$ is empty, then $\AS_h(\s)$ coincides with the algebra defined in $\cite{BFK}$, which is a topological version of the usual Kauffman bracket skein algebra $\mathcal{S}_q(\Sigma)$ over a formal parameter $q$ as defined in \cite{Przytycki} and \cite{Turaev1}. Second, The choice of the coefficients $q^{\pm\frac{1}{2}}$ in the puncture-skein relation turns out to be essential in proving that this algebra is well-defined and that the product is associative. It will also have a geometrical justification which will be explained in Section \ref{section:3}. Finally, the central elements $v$ associated to the punctures are not essential from the algebraic point of view but will play an important r\^ole in the geometric interpretation given in Section~\ref{section:3}, where they will be related to the choice of a horocycle at each puncture.

We recall that a $\mathbb{C}[[h]]$--module $M$ is called \emph{topologically free} if there exists a $\mathbb{C}$-vector space $\mathcal{V}$ so that $M$ is isomorphic to $\mathcal{V}[[h]]$ (see for example \cite{Kassel}). We have the following

\begin{theorem} The skein algebra $\big(\AS_h(\s),\ \cdot\ \big)$ is a well-defined topologically free associative $\mathbb{C}[[h]]$--algebra.  
\end{theorem}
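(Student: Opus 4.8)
The plan is to adapt, and extend to arcs, the argument of Przytycki \cite{Przytycki} --- carried out in the topological $\mathbb{C}[[h]]$ setting by Bullock, Frohman and Kania-Bartoszy\'nska \cite{BFK} --- that the Kauffman bracket skein module of a thickened surface is free on the set of multicurves. The heart of the matter is to produce a basis together with a well-defined resolving map onto it, from which well-definedness and topological freeness both follow; associativity and centrality are then comparatively routine. To begin, call a generalized framed link \emph{reduced} if it admits a diagram with no crossing in $\s$, no consecutive crossing at a puncture, no contractible loop, and no loop around a single puncture. Any generalized link is rewritten, modulo the relations of Definition~\ref{arcskein}, as a finite $\mathbb{C}[[h]]$--linear combination of reduced links with coefficients in $\mathbb{C}[v^{\pm1}\colon v\in V]$, as follows: use the Kauffman skein relation to remove all crossings in $\s$; then at each puncture carrying at least two arc-ends, isotope near that puncture to produce a consecutive crossing --- which introduces no crossing in $\s$, since condition $(1)$ keeps the arc-ends at distinct heights --- and apply the puncture--skein relation, decreasing the number of arc-ends there by two; finally use the framing and puncture relations to discard the remaining trivial loops. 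An evident complexity (first the number of crossings in $\s$, then the number of arc-ends at punctures, then the number of components) strictly drops at each step, so the procedure terminates. Writing $\mathcal{V}$ for the $\mathbb{C}$--vector space spanned by the products of an isotopy class of reduced link with a Laurent monomial in the $v$'s, the induced map $\mathcal{V}[[h]]\to\AS_h(\s)$ is therefore surjective.

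It remains to show this surjection is injective; since $\mathcal{V}[[h]]$ is manifestly topologically free over $\mathbb{C}[[h]]$, this simultaneously establishes that $\AS_h(\s)$ is well-defined and topologically free. For this I would construct a continuous $\mathbb{C}[[h]]$--linear map $\rho\colon\AS_h(\s)\to\mathcal{V}[[h]]$: define $\rho$ on a diagram by the reduction above, extend, and prove the output independent of all choices. By a diamond-lemma argument this reduces to \emph{local confluence}: that resolving any two (consecutive) crossings of a fixed diagram in either order gives the same element of $\mathcal{V}[[h]]$, and that the reduction is invariant under the moves relating diagrams of isotopic generalized links --- Reidemeister moves II and III, and the move II$'$ of Figure~\ref{RII'} together with its iterate. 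The sub-cases involving only crossings in $\s$, and only moves II and III, are the classical verifications of \cite{Przytycki}. The genuinely new sub-cases --- two consecutive crossings at the same puncture resolved in either order (equivalently, resolving in different orders a puncture where three or more arcs meet), a consecutive crossing at a puncture together with an adjacent crossing in $\s$, and invariance under move II$'$ --- are finite diagram computations, and it is in exactly these that the coefficients $q^{\pm\frac{1}{2}}$ and the factor $\frac{1}{v}$ in the puncture--skein relation become essential. Granting local confluence, $\rho$ kills the four defining relations and fixes reduced links, hence is inverse to the surjection $\mathcal{V}[[h]]\to\AS_h(\s)$; so the latter is an isomorphism and $\AS_h(\s)\cong\mathcal{V}[[h]]$ is topologically free.

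Finally, for the algebra structure: stacking is compatible with isotopy, and applying one of the four relations to $L_1$ happens inside a ball which, after rescaling $L_1$ into $\s\times[\tfrac{1}{2},1]$, lies above $L_2$, so it sends $L_1\cdot L_2$ to the corresponding combination --- and symmetrically on the right --- whence $\cdot$ descends to $\AS_h(\s)$. For associativity, $(L_1\cdot L_2)\cdot L_3$ and $L_1\cdot(L_2\cdot L_3)$ are both represented by the link obtained by rescaling $L_1,L_2,L_3$ into three consecutive subintervals of $[0,1]$; above each puncture the arc-ends of $L_3$, then $L_2$, then $L_1$ appear in that height order in both, so no ambiguity arises from condition $(1)$ and the two links are isotopic; the empty link is a two-sided unit and the $v^{\pm1}$ are central by construction. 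The main obstacle in the whole argument is the new confluence check at the punctures in the preceding paragraph --- that resolving a puncture where three or more arcs meet is independent of the order of resolutions. This has no counterpart in \cite{Przytycki,BFK}, it is exactly what pins down the normalization of the puncture--skein relation, and it is where I expect the bulk of the (finite but delicate) case analysis to lie.
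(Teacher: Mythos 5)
Your overall strategy is the same as the paper's: span $\AS_h(\s)$ by diagrams with no crossings, no trivial loops and no loops around punctures (the space $\mathcal{V}[[h]]$ with the Laurent monomials in the $v$'s), and obtain well-definedness and topological freeness simultaneously from a resolving map inverse to the quotient map, the genuinely new content being the confluence checks at the punctures. Indeed, the paper's explicit verification of associativity for three arcs meeting at a puncture is exactly your check that two consecutive crossings at one puncture may be resolved in either order, its Reidemeister II$'$ computation is your move-II$'$ case, and its computation $\alpha\cdot\bigodot=\bigodot\cdot\alpha=(q+q^{-1})\alpha$ handles the compatibility of the puncture relation with an adjacent surface crossing, close to your second case; your observation that associativity of the product itself is automatic from associativity of stacking, once locality shows the relations generate a two-sided ideal, is a mild streamlining of the paper's presentation.

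One step fails as written, however: your reduction order and termination measure. You remove all crossings in $\s$ first and then resolve the punctures, claiming the lexicographic complexity (crossings in $\s$, arc-ends at punctures, components) strictly drops at each step. But when three or more arc-ends meet at a puncture, resolving a consecutive crossing there forces the connecting band to pass over or under the remaining strands ending at that puncture, so the puncture-skein relation creates new crossings in $\s$ and your first coordinate can increase. This is visible in the paper's own associativity computation, where the positive resolution of the first consecutive crossing acquires a crossing in the surface that is then resolved with Kauffman coefficients $q^{\pm1}$ (whence the terms $q^{\frac{3}{2}}$ and $q^{-\frac{1}{2}}$). The remedy is the order used in the paper's construction of the map $\Psi$: resolve all intersections at the punctures first (each such step strictly decreases the total number of arc-ends and creates none), then resolve crossings in $\s$, then evaluate trivial loops and loops around punctures; equivalently, reorder your measure lexicographically as (arc-ends at punctures, crossings in $\s$, components). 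For the same reason, your parenthetical claim that bringing two height-adjacent ends into the consecutive-crossing position ``introduces no crossing in $\s$'' should be dropped: either the isotopy or the resolution bands must cross the other strands ending at that puncture, and those crossings must enter the bookkeeping. With this correction your plan coincides with the paper's proof.
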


\begin{proof} In order to verify the well-definition of the multiplication, it suffices to show that it is invariant under Reidemeister Moves II, II$'$ and III. The invariance under Reidemeister Moves II and III follows from the same arguments as in \cite{Przytycki}.  For Reidemeister Move II$'$, we first calculate that
\begin{equation*}
\begin{split}
\cp{\includegraphics[width=1cm]{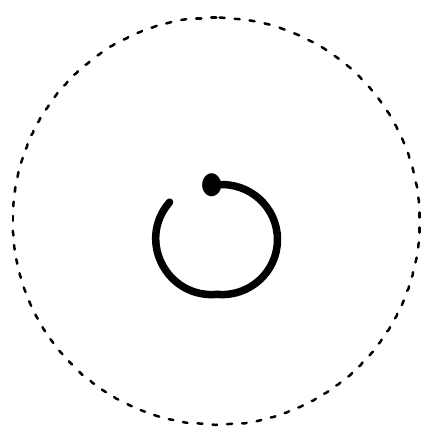}}\ &=\ \frac{1}{v}\big(q^{\frac{1}{2}}\ \cp{\includegraphics[width=1cm]{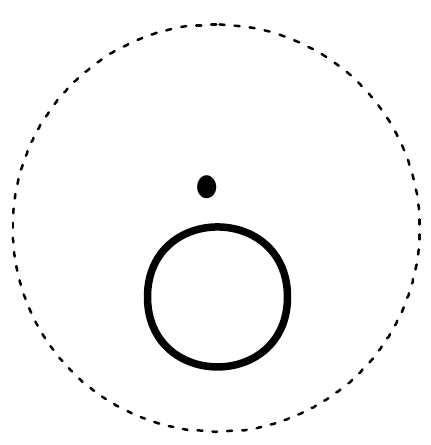}}\ +\ q^{-\frac{1}{2}}\ \cp{\includegraphics[width=1cm]{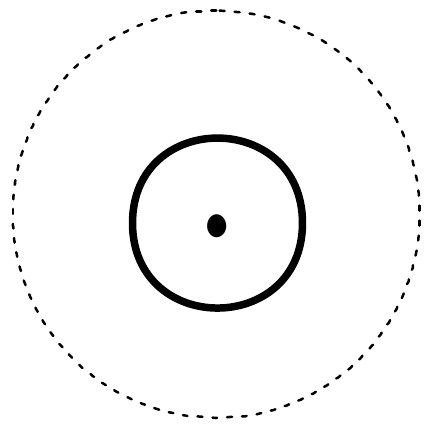}}\,\big)\\
&=\frac{1}{v}\big(q^{\frac{1}{2}}(-q^2-q^{-2})+q^{-\frac{1}{2}}(q+q^{-1})\big)=\frac{1}{v}(q^{\frac{1}{2}}-q^{\frac{5}{2}}),
\end{split}
\end{equation*}
where $v$ is the puncture and the second equality follows from the framing and the puncture relations. With this, we obtain
\begin{equation*}
\begin{split}
\cp{\includegraphics[width=1cm]{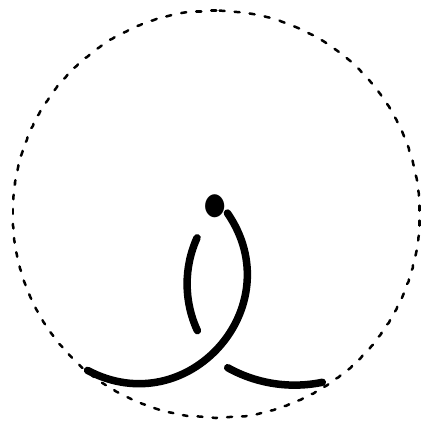}}\ &=\ q\ \cp{\includegraphics[width=1cm]{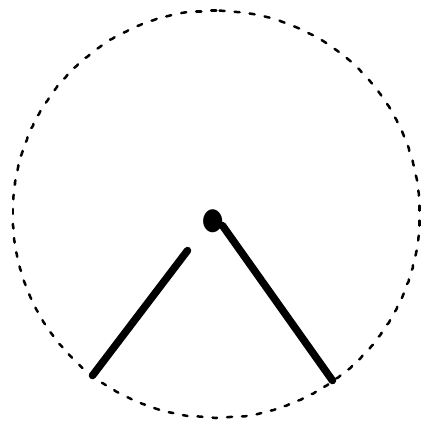}}\ +\ q^{-1}\ \cp{\includegraphics[width=1cm]{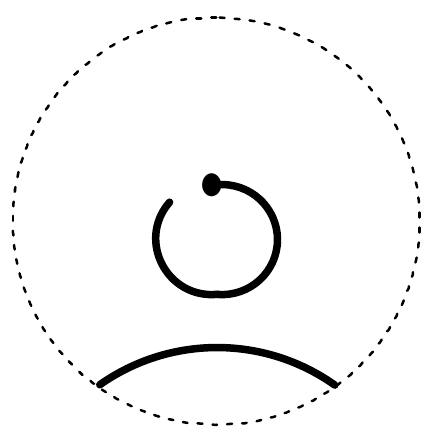}}\\
&=\ \frac{1}{v}q\big(q^{\frac{1}{2}}\ \cp{\includegraphics[width=1cm]{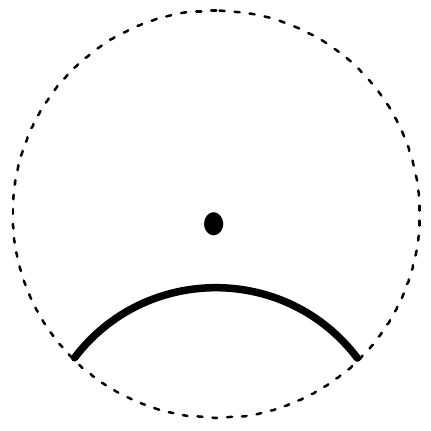}}\  +\   q^{-\frac{1}{2}}\ \cp{\includegraphics[width=1cm]{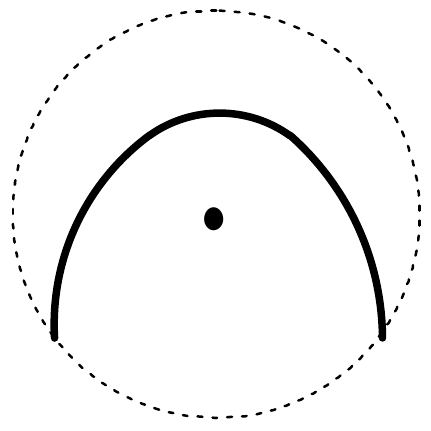}}\,\big)+ \frac{1}{v}q^{-1}(q^{\frac{1}{2}}-q^{\frac{5}{2}})\ \cp{\includegraphics[width=1cm]{3}}\\
&=\  \frac{1}{v}\Big(q^{\frac{1}{2}}\ \cp{\includegraphics[width=1cm]{2}} \ +\  q^{-\frac{1}{2}}\ \cp{\includegraphics[width=1cm]{3}}\Big)=\  \cp{\includegraphics[width=1cm]{1}}\ ,
\end{split}
\end{equation*}
where the first equality follows from the Kauffman bracket skein relation and the second equality from the puncture-skein relation and the previous calculation. The well-definition under the other Reidemeister Move II$'$ is verified similarly. 

To show that $\alpha\cdot\bigodot=\bigodot\cdot\ \alpha=(q+q^{-1})\alpha$, the only case we need to consider is when $\bigodot$ is a circle around a puncture $v$ and $\alpha$ is a framed arc with $v$ one of its end points. For $\alpha\cdot\bigodot$, we have
\begin{equation*}
\begin{split}
\cp{\includegraphics[width=1cm]{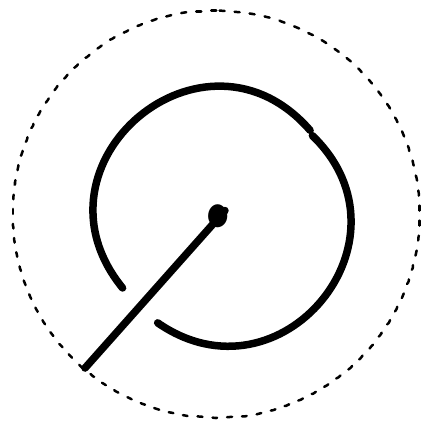}}\ &=\ q\ \cp{\includegraphics[width=1cm]{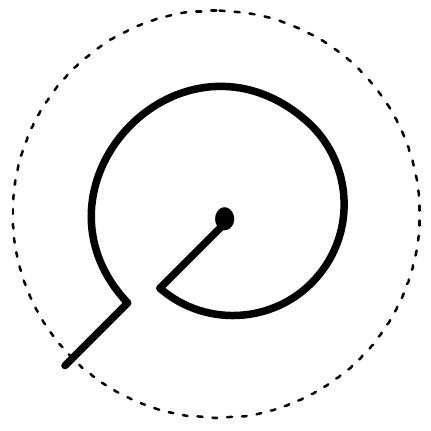}}\ +\ q^{-1}\ \cp{\includegraphics[width=1cm]{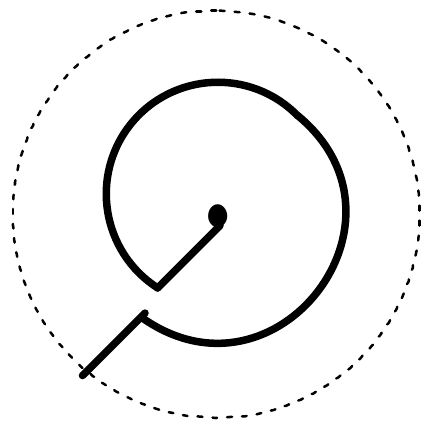}}\ =\ \big(q+q^{-1}\big)\ \cp{\includegraphics[width=1cm]{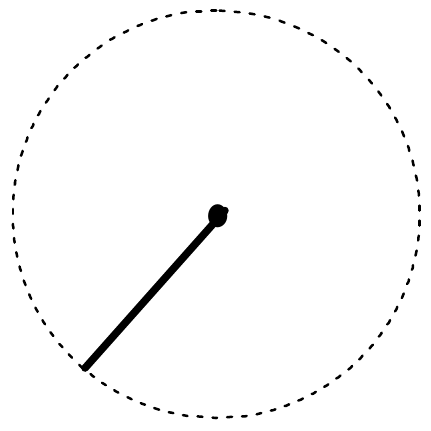}}\ ;\\
\end{split}
\end{equation*}
and similarly for $\bigodot\cdot\ \alpha=(q+q^{-1})\alpha$.

When three links cross inside the surface, the associativity follows from the same arguments as in \cite{Przytycki}, and similarly if some intersections happen at a puncture as long as there are no triple points. If three arcs $\alpha$, $\beta$ and $\gamma$ meet at a puncture $v$, say in counterclockwise order, we have for $(\alpha\cdot\beta)\cdot\gamma$ that
\begin{equation*}
\begin{split}
_{\beta}\cp{\includegraphics[width=1cm]{3valent}}_{\gamma}^{\alpha}\ &=\ \frac{1}{v}\Big(q^{\frac{1}{2}}\ \cp{\includegraphics[width=1cm]{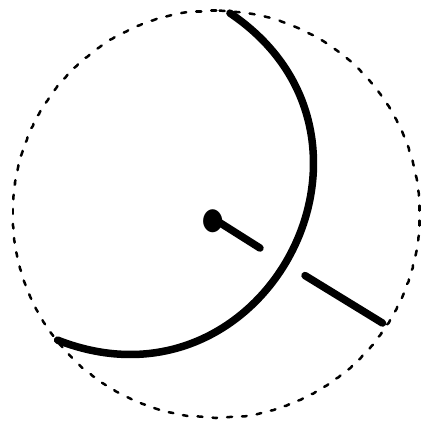}}\ +\ q^{-\frac{1}{2}}\ \cp{\includegraphics[width=1cm]{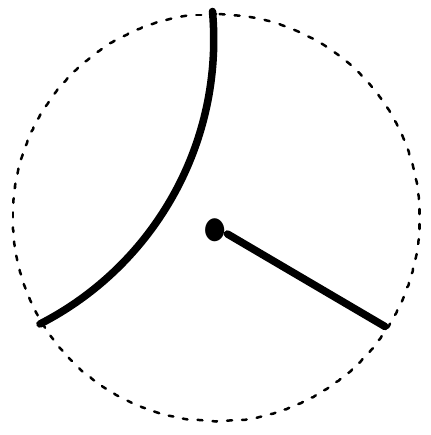}}\Big)\\
&=\frac{1}{v}\Big(q^{\frac{3}{2}}\ \cp{\includegraphics[width=1cm]{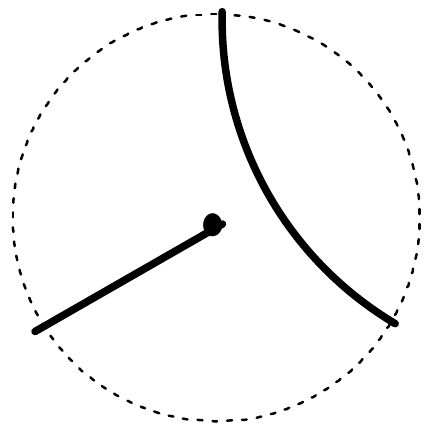}}\ +\ q^{-\frac{1}{2}}\ \cp{\includegraphics[width=1cm]{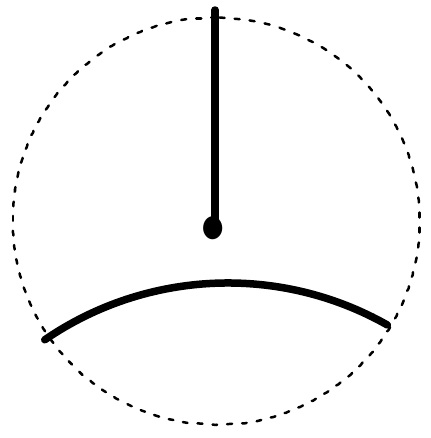}}\ +\ q^{-\frac{1}{2}}\ \cp{\includegraphics[width=1cm]{c}}\Big);
\end{split}
\end{equation*}
and for $\alpha\cdot(\beta\cdot\gamma)$ that
\begin{equation*}
\begin{split}
_{\beta}\cp{\includegraphics[width=1cm]{3valent}}_{\gamma}^{\alpha}\ &=\frac{1}{v}\Big( q^{\frac{1}{2}}\ \cp{\includegraphics[width=1cm]{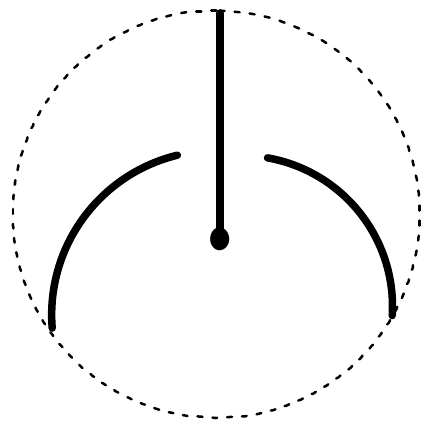}}\ +\ q^{-\frac{1}{2}}\ \cp{\includegraphics[width=1cm]{e}}\Big)\\
&=\frac{1}{v}\Big( q^{\frac{3}{2}}\ \cp{\includegraphics[width=1cm]{d}}\ +\ q^{-\frac{1}{2}}\ \cp{\includegraphics[width=1cm]{c}}\ +\ q^{-\frac{1}{2}}\ \cp{\includegraphics[width=1cm]{e}}\Big).
\end{split}
\end{equation*}
The case when $\alpha$, $\beta$ and $\gamma$ are ordered clockwise is similar.

The proof that $\AS_h(\s)$ is topologically free follows from the same arguments as in \cite{BFK}. In our case, a diagram of a generalized framed link in $\s\times (0,1)$ is a graph in $\overline{\s}$ which is four-valent in $\s$ and many-valent at $V$ with crossings and vertical framing. Two diagrams represent the same generalized framed link if and only if they differ by a sequence of isotopies of $\s$ and Reidemeister Moves II, II$'$ and III. We consider the vector space $\mathcal{W}$ over $\mathbb{C}$ whose basis consists of all diagrams which have no crossing in $\overline{\s}$, no trivial loops and no loops bounding a puncture and let $\mathcal{V}=\mathcal{W}\otimes\mathbb{C}[V^{\pm 1}]$. To any element in $\mathbb{C}[\mathcal{L},V^{\pm}]$ one can associate an element of $\mathcal{V}[[h]]$ by considering one of its associated diagrams and first resolving intersections at the punctures using the puncture-skein relation (2), then resolving intersections in $\s$ using the skein relation (1) and finally sending each trivial loops to $-q^2-q^{-2}$ and loops around punctures to $q+q^{-1}$. This process converges when extended to power series in $\mathbb{C}[\mathcal{L},V^{\pm}][[h]]$ and can be seen to descend to a well-defined homomorphism of topological algebras $\Psi\colon \AS_h(\s)\to\mathcal{V}[[h]]$ whose inverse is given by considering the inclusion of $\mathcal{V}[[h]]$ in $\mathbb{C}[\mathcal{L}, V^{\pm}][[h]]$ and taking the quotient.
\end{proof}

\begin{remark}
In the rest of this paper, we call an element $\mathcal{S}$ of $\mathcal{V}=\mathcal{W}\otimes\mathbb{C}[V^{\pm}]$ a \emph{state}. Recall that $\mathcal{W}$ consists of all the diagrams on $\s$ with no crossings and no loops bounding a disk or a puncture.
\end{remark}


\subsection{The Poisson algebra of curves on a punctured surface}

The classical counterpart of the skein algebra can be defined in terms of curves on the surface $\s$ itself. We define a \emph{generalized curve} to be a union of immersed loops and arcs on $\s$ with ends at the punctures. We also consider the empty set as a curve. Any two generalized curves will be identified if they differ by a \emph{regular isotopy}, that is, if one can be obtained from the other by a sequence of isotopies of $\s$ and Reidemeister Moves II, II$'$ and III. We do not however identify curves differing by a Reidemeister Move I. 

\begin{definition}\label{def:curve} The \emph{algebra of curves} $\mathcal{C}(\s)$ on $\s$ is the quotient of the $\mathbb{C}$--algebra generated by the regular isotopy classes of generalized curves on $\s$, the punctures of $\s$ and their formal inverses, modulo the subspace generated by the following relations:
\begin{enumerate}[(1$'$)]
\item \emph{\bf Skein Relation:} $\cp{\includegraphics[width=0.8cm]{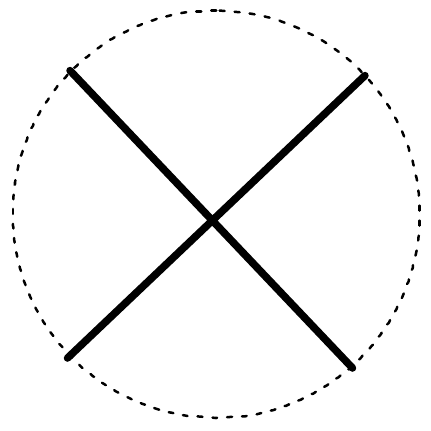}}\ =\ \cp{\includegraphics[width=0.8cm]{+crossing}}\  +\ \cp{\includegraphics[width=0.8cm]{ncrossing}}\ $ for an intersection in $\s$;

\item \emph{\bf Puncture-Skein Relation:} $\cp{\includegraphics[width=0.8cm]{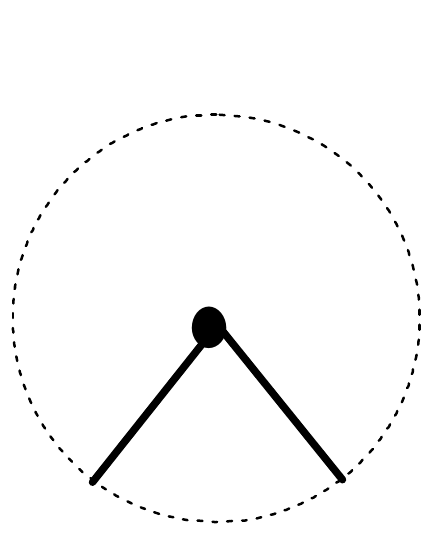}}\ =\ v^{-1}\big(\,\cp{\includegraphics[width=0.8cm]{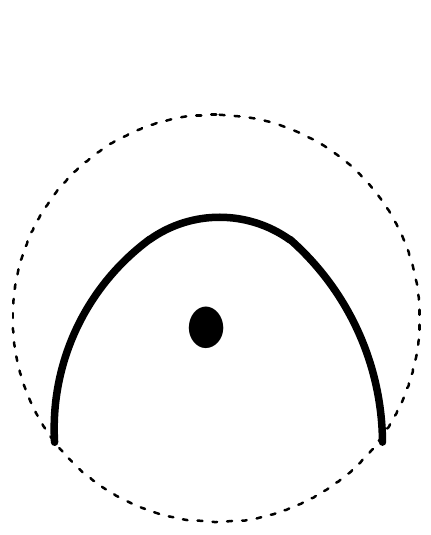}}\  +\ \cp{\includegraphics[width=0.8cm]{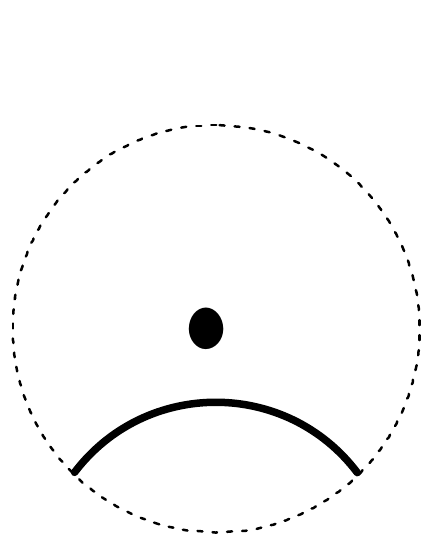}}\,\big)\ $ for an intersection at $v$;

\item \emph{\bf Framing Relation:} $\cp{\includegraphics[width=0.7cm]{trivial}}\ =\ -2$;

\item \emph{\bf Puncture Relation:} $\cp{\includegraphics[width=0.7cm]{curve-around-puncture}}\ =\ 2$.
\end{enumerate}
The product $\alpha\cdot\beta$ of two generalized curves $\alpha$ and $\beta$ is obtained by taking their union with unit the empty set.
\end{definition}

The fact that $\mathcal{C}(\s)$ is a well-defined commutative algebra follows from the same arguments as for $\AS_h(\s)$. These two algebras are related naturally as follows: let $p\colon\AS_h(\s)\to\mathcal{C}(\s)$ be the map which to a generalized link in $\s\times[0,1]$ with vertical framing associates its projection on $\s$. We also let $p(h)=0$ and let $p(v^{\pm})=v^{\pm}$ for each puncture $v$. Since $p$ maps relations (1)\,--\,(4) to the corresponding relations (1$'$)\,--\,(4$'$), and maps the stacking of generalized framed links in $\s\times[0,1]$ to the union of generalized curves on $\s$, it is a well-defined surjective $\mathbb{C}$--algebra homomorphism.

\begin{proposition}\label{prop:iso} The map $\overline{p}\colon\AS_h(\s)/h\AS_h(\s)\to\mathcal{C}(\s)$ induced by $p$ is an isomorphism of $\mathbb{C}$--algebras.
\end{proposition}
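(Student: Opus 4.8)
The plan is to produce an explicit two-sided inverse of $\overline p$. Since surjectivity of $\overline p$ is immediate from that of $p$, everything rests on injectivity, which I would establish by constructing a $\mathbb C$--algebra homomorphism $s\colon\C(\s)\to\AS_h(\s)/h\AS_h(\s)$ with $s\circ\overline p=\mathrm{id}$. The starting observation is that $q=e^{h/4}\equiv 1$ modulo $h$, so that $q^{\pm\frac{1}{2}}\equiv 1$, $-q^2-q^{-2}\equiv -2$ and $q+q^{-1}\equiv 2$; hence the four defining relations $(1)$--$(4)$ of $\AS_h(\s)$ reduce, after setting $h=0$, to exactly the four defining relations $(1')$--$(4')$ of $\C(\s)$. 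A second observation in the same vein: at $q=1$ both the Kauffman bracket relation and the puncture--skein relation become symmetric in the two resolutions of the crossing, so \emph{reversing any crossing} --- ordinary or consecutive at a puncture --- does not change the class of a generalized framed link in $\AS_h(\s)/h\AS_h(\s)$.

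With this, I would define $s$ as follows. To a generalized curve $c$ on $\s$, assign the class in $\AS_h(\s)/h\AS_h(\s)$ of any generalized framed link obtained by choosing over/under information at each double point of $c$ (in $\s$ and at the punctures) together with the vertical framing; by the crossing-reversal remark this class is independent of all the over/under choices, and it is independent of the regular isotopy class of $c$ because Reidemeister Moves II, II$'$, III hold in $\AS_h(\s)$ and any mismatch in the over/under data these moves require is absorbed by crossing reversals. Put $s(v^{\pm1})=v^{\pm1}$. Since a multi-component curve is the product of its components and the stacking operation realizes one admissible lift of a union of curves, $s$ is multiplicative; and $s$ annihilates the relations $(1')$--$(4')$ precisely because these are the mod-$h$ reductions of $(1)$--$(4)$. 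Thus $s$ is a well-defined homomorphism of $\mathbb C$--algebras.

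It remains to verify that $s$ and $\overline p$ are mutually inverse. For $\overline p\circ s=\mathrm{id}_{\C(\s)}$: a generalized curve $c$ is sent by $s$ to one of its framed lifts and then by $\overline p=p$ to the projection of that lift, which is $c$ again, while $v^{\pm1}\mapsto v^{\pm1}$; two $\mathbb C$--algebra maps agreeing on these generators coincide. For $s\circ\overline p=\mathrm{id}$: recall that every generalized framed link can be isotoped so that its framing is vertical (as noted in Section~\ref{section:2}); for such a link $\alpha$, $\overline p$ returns the underlying diagram in $\C(\s)$ and $s$ sends that diagram back to the class of $\alpha$. As $\AS_h(\s)/h\AS_h(\s)$ is generated over $\mathbb C$ by the classes of generalized framed links together with the elements of $V^{\pm1}$, and $s\circ\overline p$ agrees with the identity on these, it is the identity. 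Hence $\overline p$ is an isomorphism.

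The only place where anything beyond bookkeeping occurs is the well-definedness of $s$, i.e. the verification that crossing reversals (in the surface and at the punctures) act trivially modulo $h$ and that, granting this, lifting a curve diagram to a framed link is insensitive both to the choices made and to regular isotopy; this is where the specific coefficients $q^{\pm\frac{1}{2}}$ in the puncture--skein relation and the values in the framing and puncture relations get used. An alternative, more structural route avoids constructing $s$ by hand: the resolution procedure used to prove topological freeness yields $\Psi\colon\AS_h(\s)\xrightarrow{\ \sim\ }\mathcal{V}[[h]]$, and the identical procedure applied with $q=1$ yields an isomorphism $\C(\s)\xrightarrow{\ \sim\ }\mathcal{V}$ onto the space of states; these fit into a commuting square together with $p$ and the reduction map $\mathcal{V}[[h]]\to\mathcal{V}$, and since $\Psi$ is a $\mathbb C[[h]]$--module isomorphism one gets $\ker p=\Psi^{-1}(h\mathcal{V}[[h]])=h\AS_h(\s)$, so that $\overline p$ --- already surjective --- is an isomorphism.
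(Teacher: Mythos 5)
Your argument is correct, but it takes a genuinely different route from the paper. The paper deduces the proposition from the topological freeness established just before: identifying $\AS_h(\s)\cong\mathcal{V}[[h]]$, it writes $a=\sum_k a_kh^k$ with $a_k\in\mathcal{V}$, observes $p(a)=p(a_0)$, and uses that $p$ is injective on the space of states $\mathcal{V}$ (crossingless diagrams) to conclude $\ker p=h\AS_h(\s)$. You instead construct an explicit inverse $s$ by lifting curve diagrams to framed links, exploiting the fact that modulo $h$ the Kauffman and puncture--skein relations are symmetric under crossing reversal, so the lift is independent of the over/under choices, of the vertical ordering at punctures, and of the regular isotopy representative; multiplicativity and the two identities $s\circ\overline p=\mathrm{id}$, $\overline p\circ s=\mathrm{id}$ then follow on generators. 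What your route buys is independence from the structure theorem: injectivity of $\overline p$ comes for free from the section $s$, without invoking $\AS_h(\s)\cong\mathcal{V}[[h]]$ or any claim that states remain linearly independent in $\C(\s)$. The price is that the well-definedness of $s$ (the crux you correctly flag) amounts to redoing, at $q=1$, the same invariance checks (Reidemeister II, II$'$, III with adjustable crossings, commutativity of stacking mod $h$) that the paper's argument inherits from the already proven freeness theorem; these checks do go through with the coefficients as given. Note also that your closing ``alternative structural route'' is in substance the paper's own proof, with the useful extra step of exhibiting $\C(\s)\cong\mathcal{V}$ via the $q=1$ resolution procedure, which the paper leaves implicit in its assertion that $p$ is injective on $\mathcal{V}$.
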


\begin{proof} Since $\AS_h(\s)\cong\mathcal{V}[[h]]$ is topologically free, each element $a\in \AS_h(\s)$ can be uniquely written as a power series $\sum a_kh^k$ with coefficients $a_i\in\mathcal{V}$. By the definition of $p$, we have $p(a)=p(a_0)$. Remember that the elements of $\mathcal{V}$ are diagrams without crossings either in $\s$ or at $V$, hence $p$ is injective on $\mathcal{V}$. Since $a_0\in\mathcal{V}$, we have $p(a_0)=0$ if and only if $a_0=0$. As a consequence, $\ker p=h\AS_h(\s)$ and $p$ induces a $\mathbb{C}$--algebra isomorphism $\overline{p}\colon\AS_h(\s)/h\AS_h(\s)\to\mathcal{C}(\s)$.
\end{proof}

In \cite{G}, Goldman defines a Lie bracket on the free algebra generated by free homotopy classes of curves on $\s$. It can be described in terms of resolutions of intersections and is of a purely topological nature. Generalizing this construction, we consider the \emph{Goldman bracket} on $\C(\s)$ to be the bilinear map $\{\ ,\ \}\colon\C(\s)\times \C(\s)\rightarrow \C(\s)$ defined as follows:
\begin{enumerate}[(1)]
\item for a puncture $v$ and a generalized curve $\alpha$, we let $\{v,\alpha\}=0$;

\item for two generalized curves $\alpha$ and $\beta$, we let
$$\{\alpha,\beta\}=\frac{1}{2}\sum_{p\in\alpha\cap\beta\cap\s}(\alpha_p\beta^+-\alpha_p\beta^-)+\frac{1}{4}\sum_{v\in\alpha\cap\beta\cap V}\frac{1}{v}(\alpha_v\beta^+-\alpha_v\beta^-).$$
\end{enumerate}

In the first sum, the \emph{positive resolution} $\alpha_p\beta^+$ of $\alpha$ and $\beta$ at $p$ is obtained by going along $\alpha$ toward $p$ then turning left at $p$ before going along $\beta$, and the \emph{negative resolution} $\alpha_p\beta^-$ is obtained similarly by turning right.
\begin{equation*}
_{\alpha}\includegraphics[width=0.9cm]{crossing-c}_{\beta}\quad\quad\quad\quad\includegraphics[width=0.9cm]{+crossing}\ \ _{\alpha_p\beta^+}\quad\quad\includegraphics[width=0.9cm]{ncrossing}\ \ _{\alpha_p\beta^-}
\end{equation*}

In the second sum, we introduce positive and negative resolutions of intersections at a puncture in a similar manner. However, we have to consider several cases depending on the number and the relative positions of the ends of $\alpha$ and $\beta$ meeting at $v$. Given an end of $\alpha$ and an end of $\beta$ at $v$, a positive resolution consists in going along the corresponding strand of $\alpha$ toward $v$ then turning left around $v$ before going along the strand of $\beta$. In this process the other ends of $\alpha$ and $\beta$, if any, are left untouched. The \emph{positive resolution} $\alpha_v\beta^+$ is then defined to be the sum of all the positive resolutions between the ends of $\alpha$ and the ends of $\beta$. The \emph{negative resolution} $\alpha_v\beta^-$ is defined accordingly by turning right around $v$. The possible configurations and resolutions are given in the table bellow.
\bigskip

\centerline{\begin{tabular}{c|rr} 
& \large$\alpha_v\beta^+$ & \large$\alpha_v\beta^-$ \\ 
\hline
& & \\
$\cp{\includegraphics[width=1cm]{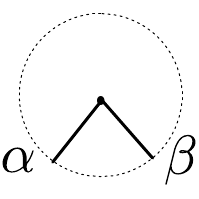}}$ & $\cp{\includegraphics[width=0.9cm]{2}}$ & $\cp{\includegraphics[width=0.9cm]{3}}$ \\
& & \\
$\cp{\includegraphics[width=1cm]{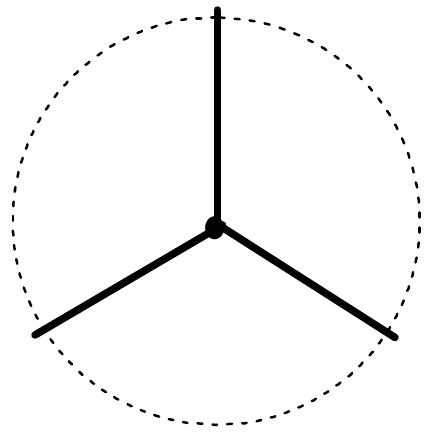}}$ & $\cp{\includegraphics[width=0.9cm]{c}}+\cp{\includegraphics[width=0.9cm]{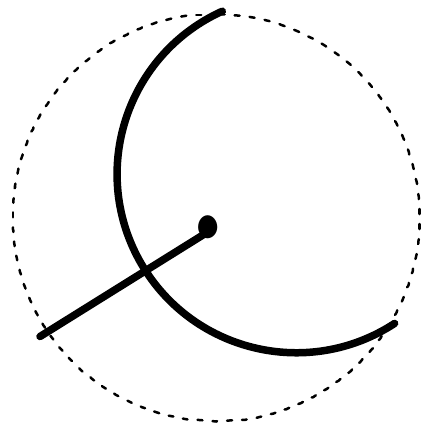}}$ & $\cp{\includegraphics[width=0.9cm]{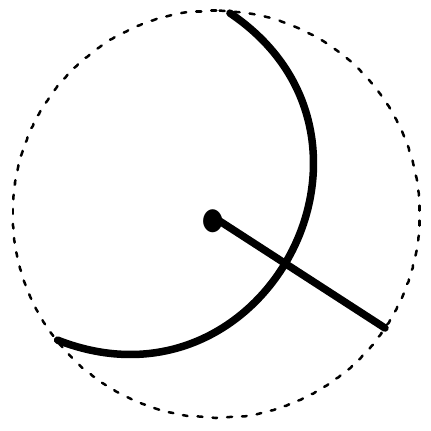}}+\cp{\includegraphics[width=0.9cm]{d}}$ \\
& & \\
$\cp{\includegraphics[width=1cm]{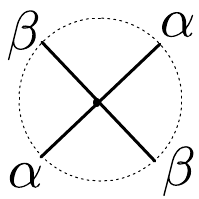}}$& $\cp{\includegraphics[width=0.9cm]{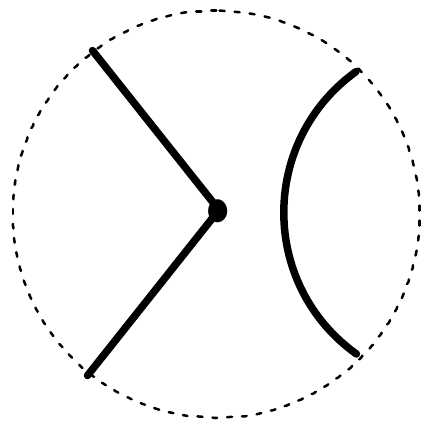}}+\cp{\includegraphics[width=0.9cm]{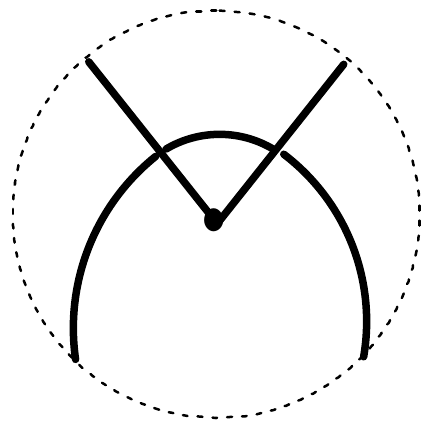}}+\cp{\includegraphics[width=0.9cm]{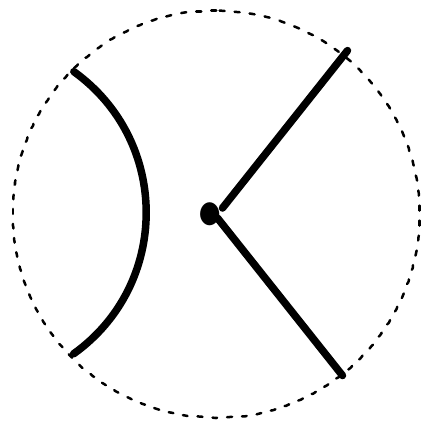}}+\cp{\includegraphics[width=0.9cm]{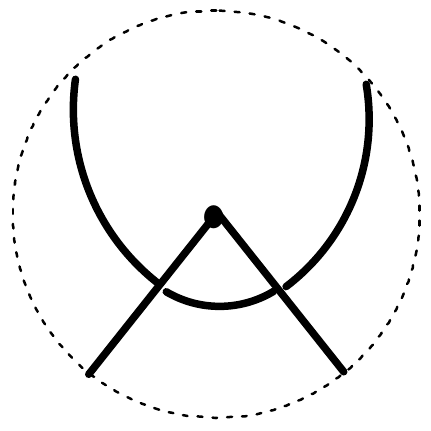}}$ & $\cp{\includegraphics[width=0.9cm]{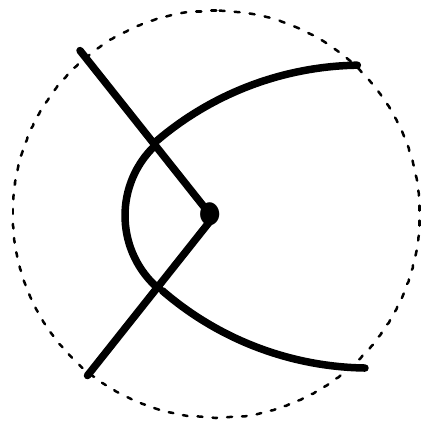}}+\cp{\includegraphics[width=0.9cm]{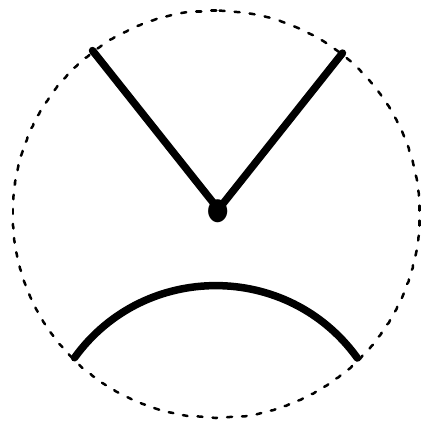}}+\cp{\includegraphics[width=0.9cm]{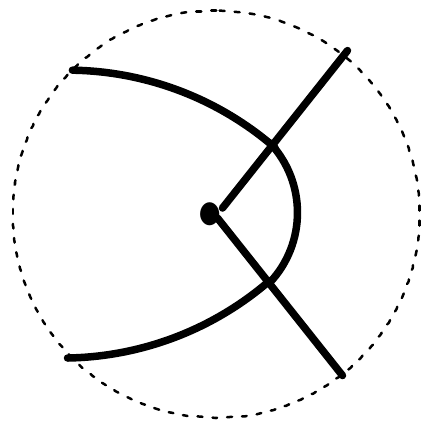}}+\cp{\includegraphics[width=0.9cm]{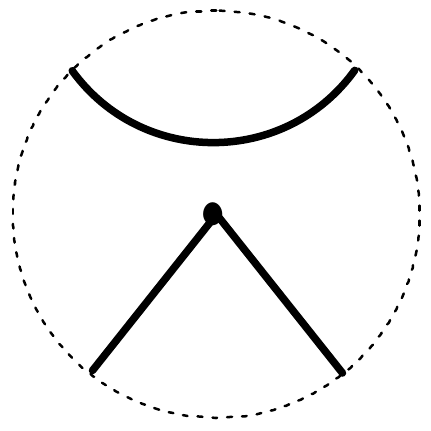}}$ \\ 
& & \\
$\cp{\includegraphics[width=1cm]{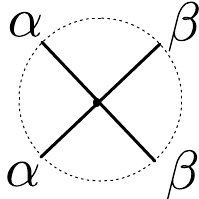}}$ & $\cp{\includegraphics[width=0.9cm]{34}}+\cp{\includegraphics[width=0.9cm]{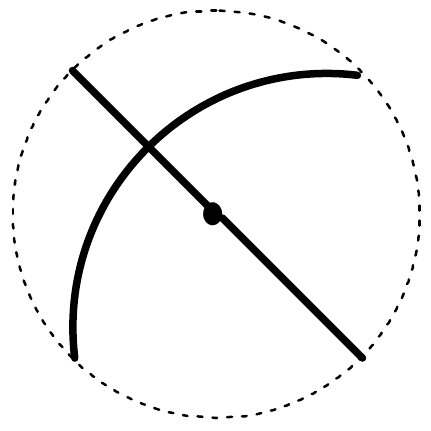}}+\cp{\includegraphics[width=0.9cm]{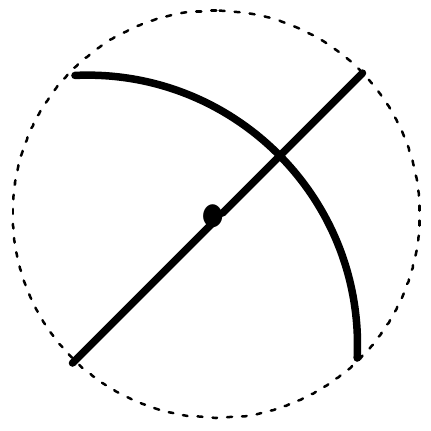}}+\cp{\includegraphics[width=0.9cm]{41}}$ & $\cp{\includegraphics[width=0.9cm]{40}}+\cp{\includegraphics[width=0.9cm]{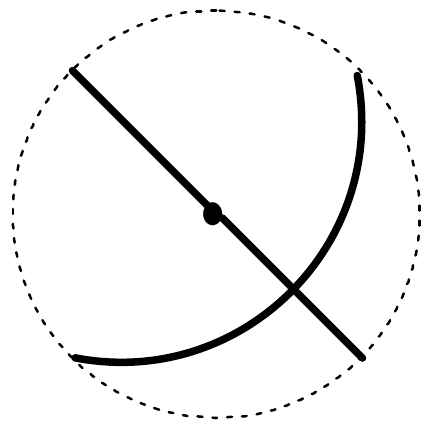}}+\cp{\includegraphics[width=0.9cm]{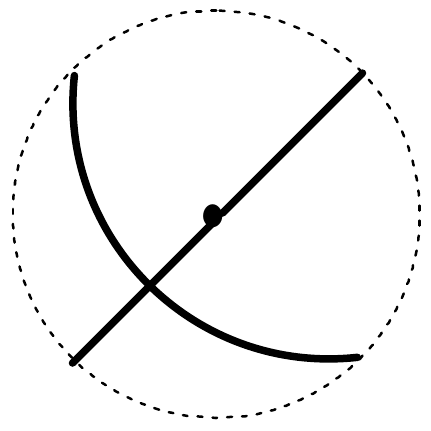}}+\cp{\includegraphics[width=0.9cm]{35}}$ \\ 
\end{tabular}}
\bigskip


Similarly to the skein algebra, most of the facts about this bracket rely on computations done locally around intersections. As such, diagrams of these types
\[
\Big\{\cp{\includegraphics[width=1cm]{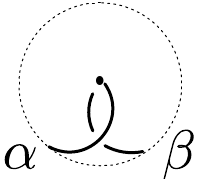}}\Big\}\hspace{2cm}
\Big\{\cp{\includegraphics[width=0.9cm]{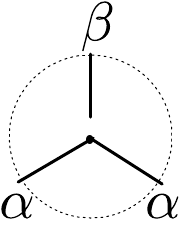}}\Big\}\]
will be used to denote the sum of the terms in $\{\alpha,\beta\}$ coming from their intersections in the dotted circle. Note in particular that the order in the bracket will be encoded using over and under crossings, even though the curves do intersect in $\overline{\s}$.

With these definitions, we have the following theorem.

\begin{theorem}\label{Poisson} The algebra $\big(\C(\s), \cdot, \{\ ,\ \}\big)$ is a well-defined Poisson algebra.
\end{theorem}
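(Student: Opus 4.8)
The plan is to deduce the Poisson axioms from the associativity of $\AS_h(\s)$ established above, by recognizing $\{\,,\,\}$ as the semiclassical limit of the commutator. Since $\AS_h(\s)\cong\mathcal V[[h]]$ is a topologically free associative $\mathbb C[[h]]$-algebra whose reduction modulo $h$ is, by Proposition~\ref{prop:iso}, the commutative algebra $\C(\s)$, any two lifts $\hat a,\hat b\in\AS_h(\s)$ of elements $a,b\in\C(\s)$ satisfy $[\hat a,\hat b]\in h\,\AS_h(\s)$, and $\{a,b\}':=\frac1h[\hat a,\hat b]\bmod h$ is independent of the lifts: replacing $\hat a$ by $\hat a+hc$ changes the commutator by $h[c,\hat b]\in h^2\AS_h(\s)$. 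A standard argument (see \cite{Kassel}) then shows that $\{\,,\,\}'$ is automatically a Poisson bracket on $\C(\s)$: bilinearity and skew-symmetry are immediate, the Leibniz rule follows from $[\hat a,\hat b\hat c]=[\hat a,\hat b]\hat c+\hat b[\hat a,\hat c]$, and the Jacobi identity from the Jacobi identity for the commutator in the associative algebra $\AS_h(\s)$. Moreover $\{v,\,\cdot\,\}'=0$, since the $v$'s are central.

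It then remains to identify $\{\,,\,\}'$ with the explicitly defined bracket $\{\,,\,\}$; this simultaneously establishes that $\{\,,\,\}$ descends to a well-defined map on $\C(\s)$ --- i.e.\ is invariant under the Reidemeister moves II, II$'$, III and compatible with the relations (1$'$)--(4$'$) --- and that it obeys the Poisson axioms. Both brackets are bilinear and annihilate the punctures, so it suffices to compare them on a pair of generalized curves $\alpha,\beta$, where both are local: sums of contributions indexed by the points of $\alpha\cap\beta$ in $\s$ and in $V$. Choosing lifts by pushing $\alpha$ into $\s\times\{\tfrac34\}$ and $\beta$ into $\s\times\{\tfrac14\}$ with vertical framing (the over/under choices at the self-intersections of $\alpha$, resp.\ of $\beta$, are immaterial, as they occur identically in $\hat\alpha\hat\beta$ and $\hat\beta\hat\alpha$), the two products differ only at the points of $\alpha\cap\beta$. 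At an ordinary double point $p$, a single crossing change and the Kauffman relation contribute $(q-q^{-1})(\alpha_p\beta^+-\alpha_p\beta^-)$ (the remaining crossings of $\alpha\cap\beta$ being resolved), and since $q=e^{h/4}$ we have $q-q^{-1}=\tfrac h2+O(h^3)$, which after dividing by $h$ and reducing mod $h$ yields exactly the coefficient $\tfrac12$. At a puncture $v$ one resolves the consecutive crossings one transposition at a time using the puncture-skein relation; the factor $q^{1/2}-q^{-1/2}=\tfrac h4+O(h^3)$ produces the coefficient $\tfrac1{4v}$, and the sum over all pairs of ends of $\alpha$ and $\beta$ at $v$ appears because the block of ends of $\alpha$ must be moved past the block of ends of $\beta$.

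The main obstacle is this last computation. When several strands of $\alpha$ and of $\beta$ end at the same puncture, one must resolve a whole family of consecutive crossings, track the resulting states, and check that the $O(h)$ part reproduces term by term the entries of the resolution table for $\alpha_v\beta^{\pm}$; this is precisely the step that forces the coefficients $q^{\pm1/2}$ and the central elements $v^{-1}$ in the puncture-skein relation, and it plays the role here that the local analysis of triple points plays in Goldman's \cite{G} and in \cite{BFK}'s treatment of the loop case. A purely classical alternative avoids $\AS_h(\s)$ altogether: one checks directly that $\{\,,\,\}$ is invariant under the Reidemeister moves (the two new intersection points created by a move II or II$'$ contribute opposite terms; the three contributions match across a move III) and under the relations (1$'$)--(4$'$), that it is skew-symmetric and a derivation in each variable (the points of $\alpha\cap(\beta\cdot\gamma)$ split into those of $\alpha\cap\beta$ and of $\alpha\cap\gamma$), and finally verifies the Jacobi identity by a local analysis of double and triple points, in the interior and at the punctures --- and again the punctured triple-point bookkeeping is where essentially all the work lies.
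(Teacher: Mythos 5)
Your strategy is sound and genuinely different from the paper's. The paper proves Theorem~\ref{Poisson} entirely on the classical side: it checks invariance of $\{\,,\,\}$ under Reidemeister moves II, II$'$, III and relations (1$'$)--(4$'$) (with explicit computations for II$'$ and for the three configurations of the puncture-skein relation, using Lemma~\ref{lemma}), gets anti-symmetry from $\alpha_x\beta^{\pm}=\beta_x\alpha^{\mp}$, verifies the Jacobi identity by Goldman-type cancellation with a separate three-term computation when $\alpha\cap\beta\cap\gamma$ contains a puncture, and deduces Leibniz from locality; only afterwards does it prove Theorem~\ref{dequantization}, using the already-established bracket. You propose to run this backwards: the semiclassical bracket of the topologically free algebra $\AS_h(\s)$ is automatically Poisson, and identifying it with the explicit formula yields well-definedness and the axioms simultaneously. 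There is no circularity in this, since the well-definition and topological freeness of $\AS_h(\s)$ and Proposition~\ref{prop:iso} precede Theorem~\ref{Poisson}, and your coefficient bookkeeping ($q-q^{-1}=\tfrac h2+O(h^3)$ giving $\tfrac12$, and $q^{1/2}-q^{-1/2}=\tfrac h4+O(h^3)$ giving $\tfrac1{4v}$) is correct; your transposition-by-transposition comparison of $\hat\alpha\hat\beta$ and $\hat\beta\hat\alpha$ would, if executed, handle the multi-end case uniformly and arguably more cleanly than the paper's state-sum.

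However, as written the proposal stops exactly where the content of the theorem lies. You name the multi-strand computation at a puncture as ``the main obstacle'' without carrying it out, and this is not a routine check: one must argue that in passing from $\hat\alpha\hat\beta$ to $\hat\beta\hat\alpha$ only pairs consisting of one $\alpha$-end and one $\beta$-end are ever exchanged (so that self-intersections of $\alpha$ at $v$ contribute nothing, matching the definition of the bracket), that each exchanged pair is a consecutive crossing so the puncture-skein relation applies, and that the projections to $\C(\s)$ of the intermediate resolved links reproduce term by term the entries of the resolution table defining $\alpha_v\beta^{\pm}$. Note that the paper's own quantum--classical comparison (Theorem~\ref{dequantization}) disposes of the case where one curve has a self-intersection at a puncture by invoking the invariance of $\{\,,\,\}$ under the puncture-skein relation --- exactly the statement you are trying to prove --- so that shortcut is not available to you and the case must be treated directly. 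The ``purely classical alternative'' you sketch at the end is an outline of the paper's actual proof, but the pieces where the work lies (the II$'$ computation, the three compatibility cases for the puncture-skein relation, and the triple-point-at-a-puncture instance of the Jacobi identity) are again left unverified. Until one of the two routes is written out in this detail, the argument is a plan rather than a proof.
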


To prove Theorem \ref{Poisson}, we need the following lemma.

\begin{lemma}\label{lemma} The following identities hold in $\mathcal{C}(\s)$:
\begin{enumerate}[(1)]
\item $\cp{\includegraphics[width=0.9cm]{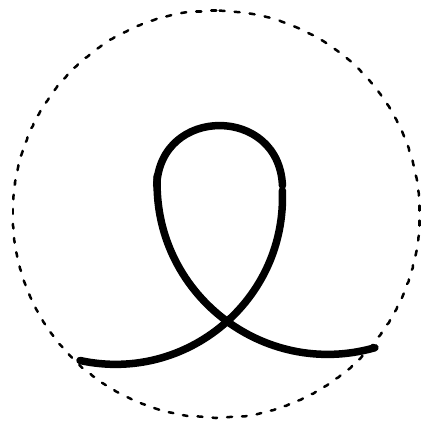}}=-\cp{\includegraphics[width=0.9cm]{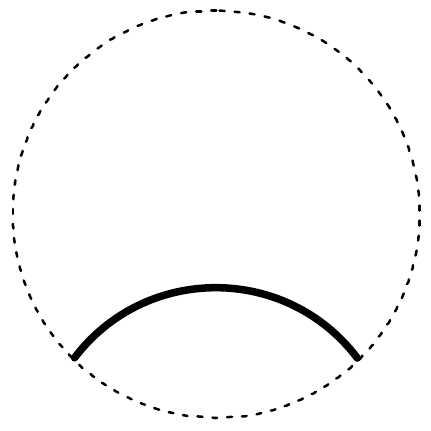}}\,$;

\item $\cp{\includegraphics[width=0.9cm]{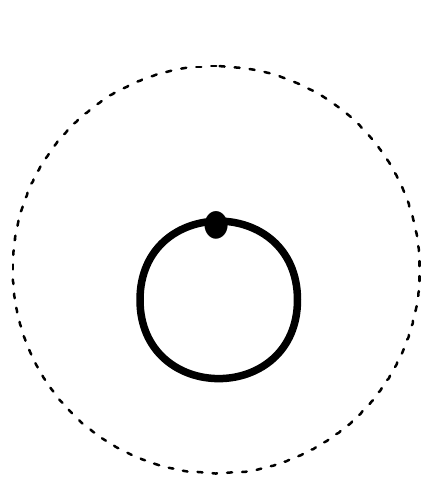}}=0\,$; 

\item $\cp{\includegraphics[width=0.9cm]{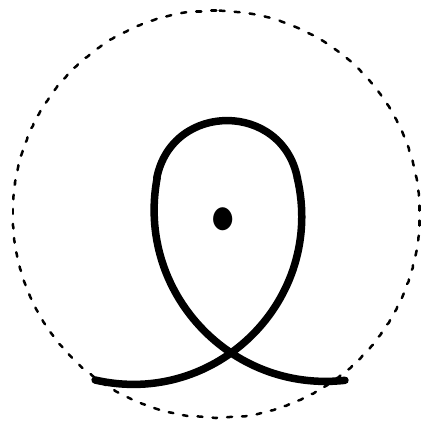}}=\cp{\includegraphics[width=0.9cm]{2}}+2\ \cp{\includegraphics[width=0.9cm]{3}}\;$.\hfill$\square$
\end{enumerate}
\end{lemma}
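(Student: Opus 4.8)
The plan is to verify each of the three identities in Lemma~\ref{lemma} by a direct local computation, using only the four defining relations of $\C(\s)$ from Definition~\ref{def:curve} and the fact (established just after that definition) that $\C(\s)$ is a well-defined commutative algebra, so that we are free to manipulate diagrams by regular isotopy. Each identity concerns a picture that differs from a standard generator only inside a small disk, so it suffices to check the stated equality for those local pictures.

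For item~(1), the left-hand diagram has a single positive kink (a Reidemeister~I curl) on a strand, and the right-hand diagram has the opposite kink; I would resolve the crossing of the curl using the Skein Relation (1$'$), obtaining the straight strand together with a small trivial loop (resp.\ a circle bounding a puncture, depending on which picture in the lemma is intended). Applying the Framing Relation (3$'$), $\bigcirc = -2$, to the trivial-loop term and comparing with the analogous resolution of the opposite curl gives the sign change; concretely both sides reduce to $(-2+1)$ times the straight strand up to the sign coming from which side the loop falls on, which yields $\cp{\includegraphics[width=0.9cm]{14}} = -\cp{\includegraphics[width=0.9cm]{15}}$. Item~(2) is similar: resolving the crossing in the diagram splits it into a term containing a trivial loop and a term containing a circle around the puncture (or two copies of one of these), and by (3$'$) and (4$'$) these contribute $-2$ and $+2$ respectively, so the two terms cancel and the diagram equals $0$. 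For item~(3), the picture has two crossings (a double curl / clasp near a puncture); I would resolve them one at a time with (1$'$), track the four resulting states, apply (3$'$) and (4$'$) to the closed components that appear, and collect terms, arriving at $\cp{\includegraphics[width=0.9cm]{2}} + 2\,\cp{\includegraphics[width=0.9cm]{3}}$.

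The main obstacle is purely bookkeeping: in item~(3) one must be careful about the relative heights and the left/right turning data when a strand that ends at a puncture is involved, since at a puncture several strands may meet and the resolutions are not related by an isotopy of $\s$; one must therefore use the Puncture-Skein Relation (2$'$) with its factor $v^{-1}$ in the right place, and make sure the two resolutions produced are labelled $+v$ and $-v$ consistently with the conventions fixed in the text. I would organize this by drawing the intermediate states explicitly and checking that each closed loop created is either contractible in $\s$ (giving $-2$ by (3$'$)) or bounds exactly one puncture (giving $+2$ by (4$'$)); any loop bounding more than one puncture or a non-trivial curve does not occur here because the manipulations are confined to a disk meeting the rest of the diagram in a controlled way. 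Once the three identities are in hand, they feed directly into the proof of Theorem~\ref{Poisson}, where they guarantee that the Goldman bracket is unchanged under Reidemeister Moves~I-type curls and under the puncture and framing relations, which is exactly what is needed for $\{\ ,\ \}$ to descend to the quotient defining $\C(\s)$.
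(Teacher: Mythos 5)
Your method is the paper's method: the paper proves this lemma by exactly the kind of direct local computation you describe, resolving the crossing with the defining relations of $\C(\s)$ and evaluating the resulting closed components by the framing and puncture relations (in fact the paper only writes out item (2) explicitly, as $\frac{1}{v}(2-2)=0$, and your treatment of (2) reproduces it). So the approach is correct and essentially identical.

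Two of your diagram identifications need adjusting, though, or the computations will not come out as stated. In (1) the two pictures are a strand \emph{with} a curl and the same strand \emph{without} it, not two curls of opposite handedness: in $\C(\s)$ both resolutions of a kink enter with coefficient $1$, so a kink of either sign evaluates to $(-2+1)=-1$ times the plain strand, and two mirror kinks would therefore be \emph{equal}, not negatives of one another; the identity $(1)$ is precisely ``curl $=-$(no curl)'', which is what your $(-2+1)$ computation gives once the right-hand picture is read as the uncurled strand (the extra ``sign coming from which side the loop falls on'' does not exist). In (3) the diagram has a single self-crossing, lying in the surface, whose small loop encircles the puncture; resolving that one crossing with the skein relation (1$'$) gives the strand passing on the other side of the puncture plus the strand together with a disjoint circle around the puncture, and the puncture relation (4$'$) turns the latter into the coefficient $2$ — no puncture-skein relation and hence no $v^{-1}$ is involved, which is consistent with the absence of $1/v$ in the stated identity. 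By contrast, it is in item (2) that the crossing sits at the puncture, so there the puncture-skein relation (2$'$) with its $v^{-1}$ is the one to use (the factor is harmless since the bracket $2-2$ vanishes). With these identifications your one-crossing resolutions give all three identities exactly as in the paper.
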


\begin{proof} This is a simple computation using the relations in Definition~\ref{def:curve}. As an illustration, for relation $(2)$ we have
\begin{equation*}
\cp{\includegraphics[width=0.9cm]{11}}\ =\ \frac{1}{v}\Big(\ \cp{\includegraphics[width=0.9cm]{13}}\ +\ \cp{\includegraphics[width=0.9cm]{12}}\ \Big) =\frac{1}{v}(2-2)=0.
\end{equation*}
\end{proof}

\begin{proof}[Proof of Theorem \ref{Poisson}] In order to verify the well-defintion, it suffices to show that $\{\ ,\ \}$ is invariant under Reidemeister Moves II, II$'$ and III and relations (1$'$)\,--\,(4$'$). The invariance under Reidemeister Moves II and III and relations (1$'$), (3$'$) and (4$'$) follows from the same arguments as in \cite{G}. For the invariance under Reidemeister Move II$'$, We have, with the pictorial conventions set earlier,
\begin{equation*}
\begin{split}
\Big\{\cp{\includegraphics[width=1cm]{4bis}}\Big\}
&=\,\frac{1}{2}\Big(\cp{\includegraphics[width=1cm]{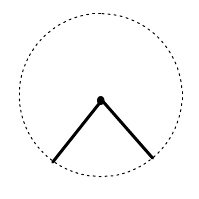}}-\cp{\includegraphics[width=0.9cm]{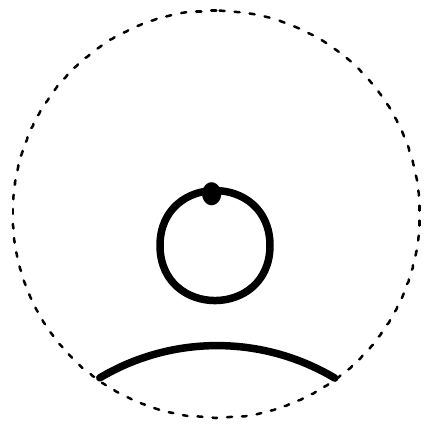}}\Big)+\frac{1}{4v}\Big(\cp{\includegraphics[width=0.9cm]{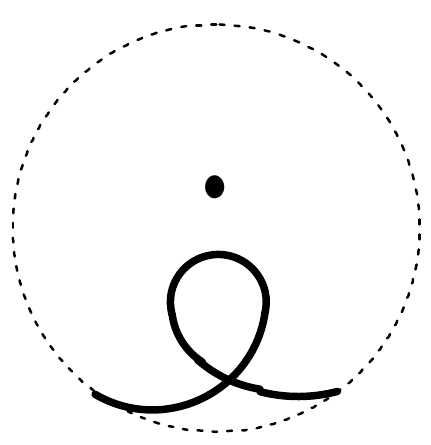}}- \cp{\includegraphics[width=0.9cm]{10}}\Big)\\
&=\,\frac{1}{2v}\Big(\cp{\includegraphics[width=0.9cm]{3}}+ \cp{\includegraphics[width=0.9cm]{2}}\Big)+\frac{1}{4v}\Big(-3\cp{\includegraphics[width=0.9cm]{3}}- \cp{\includegraphics[width=1cm]{2}}\Big)\\
&=\,\frac{1}{4v}\Big(\cp{\includegraphics[width=0.9cm]{2}}- \cp{\includegraphics[width=0.9cm]{3}}\Big)= \Big\{\cp{\includegraphics[width=0.9cm]{1}}\Big\},
\end{split}
\end{equation*}
where the second equality follows from Lemma \ref{lemma}. For the invariance under the puncture-skein relation (3$'$), we have to verify the following three cases:
\begin{enumerate}[(i)]
\item
$\Big\{\cp{\includegraphics[width=0.9cm]{24}}\Big\}\ =v^{-1}\Big(\Big\{\cp{\includegraphics[width=0.9cm]{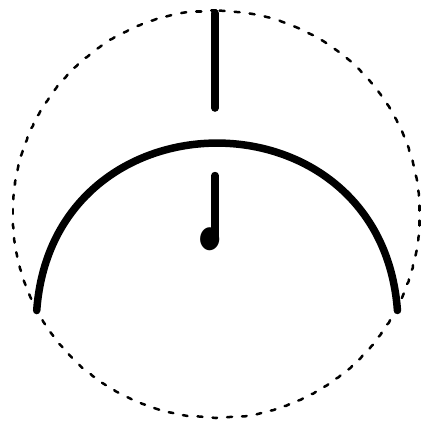}}\Big\}+\Big\{\cp{\includegraphics[width=0.9cm]{e}}\Big\}\Big)$,

\item 
$\Big\{\cp{\includegraphics[width=0.9cm]{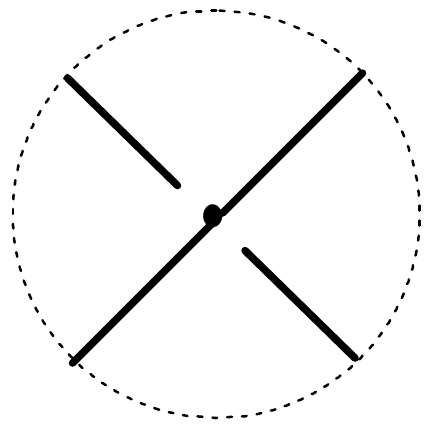}}\Big\}=v^{-1}\Big(\Big\{\cp{\includegraphics[width=0.9cm]{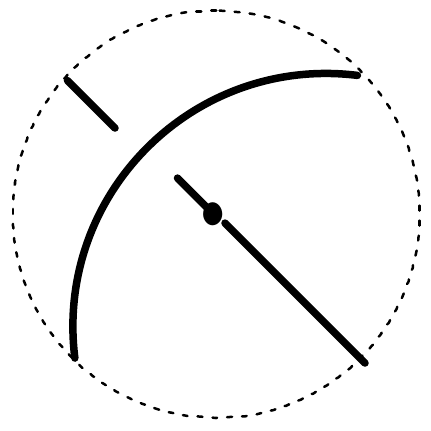}}\Big\}+\Big\{\cp{\includegraphics[width=0.9cm]{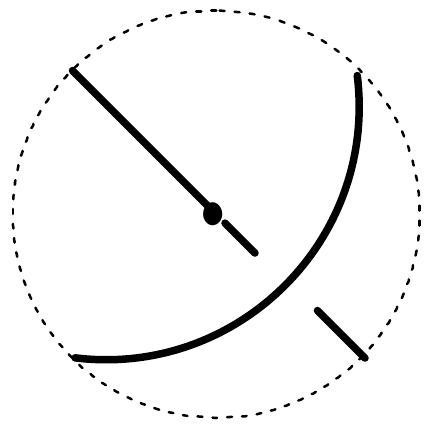}}\Big\}\Big)$,

\item
$\Big\{\cp{\includegraphics[width=0.9cm]{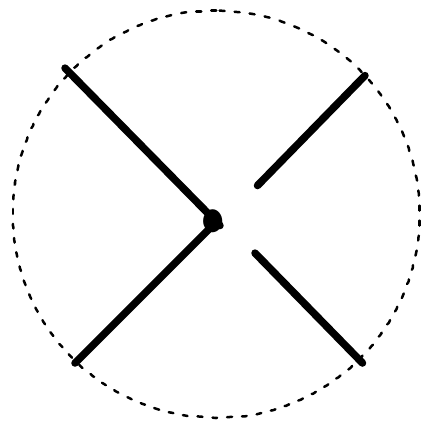}}\Big\}=v^{-1}\Big(\Big\{\cp{\includegraphics[width=0.9cm]{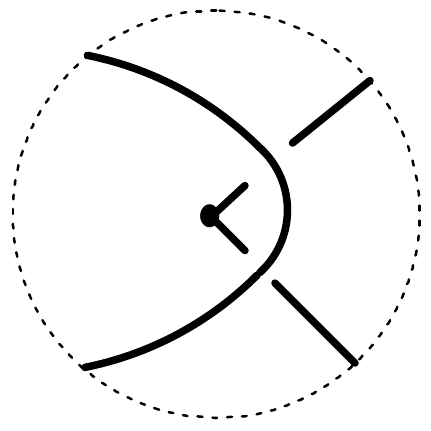}}\Big\}+ \Big\{\cp{\includegraphics[width=0.9cm]{37}}\Big\}\Big)$.
\end{enumerate}
For (i), if we apply to the left hand side the definition of the bracket then resolve the intersections in the surface using the skein relation, we obtain 
\begin{equation*}
\begin{split}
\Big\{\cp{\includegraphics[width=0.9cm]{24}}\Big\}
=&\,\frac{1}{4v}\Big(\cp{\includegraphics[width=0.9cm]{c}}+\cp{\includegraphics[width=0.9cm]{26}}-\cp{\includegraphics[width=0.9cm]{25}}-\cp{\includegraphics[width=0.9cm]{d}}\Big)\\
=&\,\frac{1}{4v}\Big(\cp{\includegraphics[width=0.9cm]{c}}+ \cp{\includegraphics[width=0.9cm]{e}}+ \cp{\includegraphics[width=0.9cm]{c}}-\cp{\includegraphics[width=0.9cm]{e}}-\cp{\includegraphics[width=0.9cm]{d}}-\cp{\includegraphics[width=0.9cm]{d}}\Big)\\
=&\,\frac{1}{2v}\Big(\cp{\includegraphics[width=0.9cm]{c}}- \cp{\includegraphics[width=0.9cm]{d}}\Big),
\end{split}
\end{equation*}
while by definition the second term on the right end side vanishes and so
\begin{equation*}
\frac{1}{v}\Big(\Big\{\cp{\includegraphics[width=0.9cm]{h}}\Big\}+\Big\{\cp{\includegraphics[width=0.9cm]{e}}\Big\}\Big)=\,\frac{1}{2v}\Big(\cp{\includegraphics[width=0.9cm]{c}}-\cp{\includegraphics[width=0.9cm]{d}}\Big).
\end{equation*}
Note that since the left hand side of (ii)  and (iii) differ by a Reidemeister Move II$'$, it suffices to verify either of them, which follows from a computation similar to that for (i) and is left to the reader. The anti-symmetry of $\{\ ,\ \}$ follows from the fact that $\alpha_x\beta^{\pm}=\beta_x\alpha^{\mp}$ for each $x\in\alpha\cap\beta$ either in the surface or at the punctures.

The verification of the Jacobi identity is in the spirit of Goldman\,\cite{G} separating the following two cases: 
\begin{enumerate}[(1)]
\item $\alpha\cap\beta\cap\gamma\cap V=\emptyset$, and 

\item $\alpha\cap\beta\cap\gamma\cap V\neq\emptyset$. 
\end{enumerate}
In case (1), we let $\alpha$, $\beta$ and $\gamma$ be three generalized curves on $\s$. We let $c(x,y)=\frac{1}{4}$ if $x$, $y\in \s$ and $c(x,y)=\frac{1}{16}x^{-1}y^{-1}$ if $x$, $y\in V$; and if only one of $x$ and $y$, say $x$, is a puncture of $\s$, we let $c(x,y)=\frac{1}{8}x^{-1}$. Then we have
\begin{equation*}
\begin{split}
&\{\{\alpha,\beta\},\gamma\}\\
=&\sum_{\bf \scriptsize\begin{split}x&\in\alpha\cap\beta\\y&\in\beta\cap\gamma\end{split}}c(x,y)\big((\alpha_x\beta^+)_y\gamma^+-(\alpha_x\beta^+)_y\gamma^--(\alpha_x\beta^-)_y\gamma^++(\alpha_x\beta^-)_y\gamma^-\big)\\
+&\sum_{\bf \scriptsize\begin{split}x&\in\alpha\cap\beta\\z&\in\gamma\cap\alpha\end{split}}c(x,z)\big((\alpha_x\beta^+)_z\gamma^+-(\alpha_x\beta^+)_z\gamma^--(\alpha_x\beta^-)_z\gamma^++(\alpha_x\beta^-)_z\gamma^-\big),
\end{split}
\end{equation*}
and
\begin{equation*}
\begin{split}
&\{\{\beta,\gamma\},\alpha\}\\
=&\sum_{\bf \scriptsize\begin{split}y&\in\beta\cap\gamma\\z&\in\gamma\cap\alpha\end{split}}c(y,z)\big((\beta_y\gamma^+)_z\alpha^+-(\beta_y\gamma^+)_z\alpha^--(\beta_y\gamma^-)_z\alpha^++(\beta_y\gamma^-)_z\alpha^-\big)\\
+&\sum_{\bf \scriptsize\begin{split}y&\in\beta\cap\gamma\\x&\in\alpha\cap\beta\end{split}}c(y,x)\big((\beta_y\gamma^+)_x\alpha^+-(\beta_y\gamma^+)_x\alpha^--(\beta_y\gamma^-)_x\alpha^++(\beta_y\gamma^-)_x\alpha^-\big).
\end{split}
\end{equation*}
By definition, we have that $(\alpha_x\beta^+)_y\gamma^+=(\beta_y\gamma^+)_x\alpha^-$, $
(\alpha_x\beta^+)_y\gamma^-=(\beta_y\gamma^-)_x\alpha^-$, $
(\alpha_x\beta^-)_y\gamma^+=(\beta_y\gamma^+)_x\alpha^+$ and $(\alpha_x\beta^-)_y\gamma^-=(\beta_y\gamma^-)_x\alpha^+$ for each $x\in\alpha\cap\beta$ and $y\in\beta\cap\gamma$, so the summands in the first row of the expansion of $\{\{\alpha,\beta\},\gamma\}$ cancel out the summands in the second row of the expansion of $\{\{\beta,\gamma\},\alpha\}$. Similarly, the summands in the second row of the expansion of $\{\{\alpha,\beta\},\gamma\}$ and the first row of the expansion of $\{\{\beta,\gamma\},\alpha\}$ cancel out the summands in the expansion of $\{\{\gamma,\alpha\},\beta\}$. Hence $\{\{\alpha,\beta\},\gamma\}+\{\{\beta,\gamma\},\alpha\}+\{\{\gamma,\alpha\},\beta\}=0$. In case (2), we let $v\in\alpha\cap\beta\cap\gamma$. If $v$ is a self-intersection of one of $\alpha$, $\beta$ or $\gamma$, say $\alpha$, then by the well-definition of $\{\ ,\ \}$ we can resolve $\alpha$ at $v$ to reduce to case (1). If $v$ is a self-intersection of none of $\alpha$, $\beta$ or $\gamma$, then we may without loss of generality assume that $\alpha$, $\beta$ and $\gamma$ are counterclockwise ordered at $v$. Then all the summands in $\{\{\alpha,\beta\},\gamma\}+\{\{\beta,\gamma\},\alpha\}+\{\{\gamma,\alpha\},\beta\}$ cancel out in pairs as in case (1) except three summands around $v$ which are from $\frac{1}{4}v^{-1}\{\alpha_v\beta^+,\gamma\}$, $\frac{1}{4}v^{-1}\{\beta_v\gamma^+,\alpha\}$ and $\frac{1}{4}v^{-1}\{\gamma_v\alpha^+,\beta\}$ respectively; and for the sum of them, we have
\begin{equation*}
\begin{split}
&\Big\{\cp{\includegraphics[width=0.9cm]{b}}\Big\}+\Big\{\cp{\includegraphics[width=0.9cm]{h}}\Big\}+\Big\{\cp{\includegraphics[width=0.9cm]{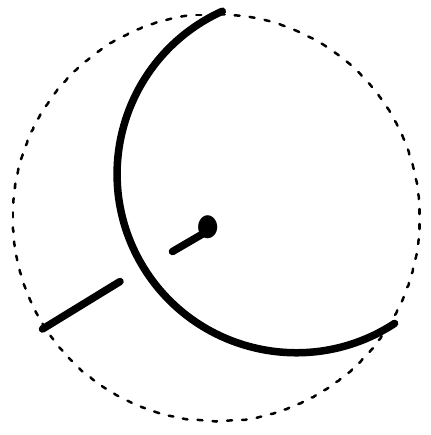}}\Big\}\\
=&\, \frac{1}{2}\Big(\cp{\includegraphics[width=0.9cm]{d}}-\cp{\includegraphics[width=0.9cm]{e}}\Big)+\frac{1}{2}\Big(\cp{\includegraphics[width=0.9cm]{c}}-\cp{\includegraphics[width=0.9cm]{d}}\Big)+\frac{1}{2}\Big(\cp{\includegraphics[width=0.9cm]{e}}-\cp{\includegraphics[width=0.9cm]{c}}\Big)=0.
\end{split}
\end{equation*}
The Leibniz rule follows directly from the definition and the fact that $(\alpha\cdot\beta)\cap\gamma=(\alpha\cup\beta)\cap\gamma=(\alpha\cap\gamma)\cup(\beta\cap\gamma)$.
\end{proof}

Following the approach of \cite{BFK} (see also \cite{KS}), we recall that a topologically free $\mathbb{C}[[h]]$--algebra $A_h$ is called a \emph{(formal) quantization} of a Poisson algebra $A$ if there is a $\mathbb{C}$--algebra isomorphism $\Theta\colon A_h/hA_h\to A$ such that $$\Theta\Big(\frac{\bar\alpha\cdot\bar\beta-\bar\beta\cdot\bar\alpha}{h}\Big)=\{\alpha,\beta\}$$
for any $\bar\alpha\in\Theta^{-1}(\alpha)$ and $\bar\beta\in\Theta^{-1}(\beta)$. Using the isomorphism from Proposition~\ref{prop:iso}, we obtain the following theorem.

\begin{theorem}\label{dequantization} The $\mathbb{C}[[h]]$--algebra $\AS_h(\s)$ is a quantization of $\C(\s)$ via the $\mathbb{C}$--algebra isomorphism $\overline{p}$.
\end{theorem}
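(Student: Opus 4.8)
The plan is to verify directly that $\overline{p}$ satisfies the defining property of a quantization, namely that the induced commutator bracket on $\AS_h(\s)/h\AS_h(\s)$ is carried by $\overline{p}$ to the Goldman bracket $\{\ ,\ \}$ on $\C(\s)$. Since topological freeness of $\AS_h(\s)$ and the isomorphism $\overline{p}\colon\AS_h(\s)/h\AS_h(\s)\to\C(\s)$ are already established (Proposition~\ref{prop:iso}), and since $\C(\s)$ is a Poisson algebra by Theorem~\ref{Poisson}, the only thing left to check is the compatibility of brackets. First I would fix two generalized curves $\alpha$ and $\beta$ on $\s$, choose lifts $\bar\alpha,\bar\beta\in\AS_h(\s)$ given by the corresponding generalized framed links with vertical framing, and compute $\bar\alpha\cdot\bar\beta-\bar\beta\cdot\bar\alpha$ modulo $h^2$. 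The difference between stacking $\bar\alpha$ over $\bar\beta$ and $\bar\beta$ over $\bar\alpha$ is localized at the points of $\alpha\cap\beta$, both those in the interior of $\s$ and those at the punctures $V$.

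The key computation is local. At an interior crossing $p\in\alpha\cap\beta\cap\s$, applying the Kauffman bracket skein relation to $\bar\alpha$ over $\bar\beta$ gives $q(\alpha_p\beta^+)+q^{-1}(\alpha_p\beta^-)$, while $\bar\beta$ over $\bar\alpha$ gives $q(\beta_p\alpha^+)+q^{-1}(\beta_p\alpha^-) = q(\alpha_p\beta^-)+q^{-1}(\alpha_p\beta^+)$ using the identity $\alpha_x\beta^\pm=\beta_x\alpha^\mp$ noted in the proof of Theorem~\ref{Poisson}. Subtracting, the difference at $p$ is $(q-q^{-1})\big((\alpha_p\beta^+)-(\alpha_p\beta^-)\big)$. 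Since $q=e^{h/4}$, we have $q-q^{-1}=\tfrac{h}{2}+O(h^2)$, so dividing by $h$ and reducing mod $h$ recovers exactly the coefficient $\tfrac12$ in the interior sum of the Goldman bracket. At a puncture crossing $v\in\alpha\cap\beta\cap V$, the same argument with the puncture-skein relation replaces $q^{\pm1}$ by $q^{\pm1/2}$ and introduces the central factor $\tfrac1v$; here $q^{1/2}-q^{-1/2}=\tfrac{h}{4}+O(h^2)$, which is precisely why the puncture term in the Goldman bracket carries the coefficient $\tfrac14$ rather than $\tfrac12$. This is the conceptual payoff of the factors $q^{\pm1/2}$ in the puncture-skein relation. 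When several ends of $\alpha$ and of $\beta$ meet at $v$, one resolves the consecutive crossings one pair at a time, and the resulting sum of resolutions matches the definition of $\alpha_v\beta^\pm$ as the sum over all pairs of ends; the relation $\alpha_v\beta^\pm=\beta_v\alpha^\mp$ again produces the needed sign. Finally, for a puncture $v$ (viewed as a central generator) and any curve $\alpha$, the lifts commute on the nose in $\AS_h(\s)$ since $v$ is central, so the bracket is $0$, matching $\{v,\alpha\}=0$.

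Assembling these local contributions, one gets
\[
\bar\alpha\cdot\bar\beta-\bar\beta\cdot\bar\alpha \;=\; h\Big(\tfrac12\!\!\sum_{p\in\alpha\cap\beta\cap\s}\!\!(\alpha_p\beta^+-\alpha_p\beta^-)+\tfrac14\!\!\sum_{v\in\alpha\cap\beta\cap V}\!\!\tfrac1v(\alpha_v\beta^+-\alpha_v\beta^-)\Big)+h^2(\cdots),
\]
and applying $\overline{p}$ after dividing by $h$ yields $\{\alpha,\beta\}$, since $\overline{p}$ simply projects framed links to curves and fixes the $v^{\pm1}$. Because such $\alpha,\beta$ (together with the punctures) generate $\C(\s)$ as an algebra and both the commutator-derived bracket and the Goldman bracket are biderivations, the identity $\Theta\big((\bar\alpha\bar\beta-\bar\beta\bar\alpha)/h\big)=\{\alpha,\beta\}$ for generators extends to all of $\C(\s)$ by the Leibniz rule; here $\Theta=\overline{p}$. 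The main obstacle is bookkeeping rather than anything deep: one must check that at a multi-valent puncture the order in which consecutive crossings are resolved does not affect the answer (which follows from the associativity computations already carried out in the proof that $\AS_h(\s)$ is well-defined) and that the signs from $\alpha_x\beta^\pm=\beta_x\alpha^\mp$ interact correctly with the non-commutativity of stacking. Once the local model is pinned down, the global statement is immediate from Proposition~\ref{prop:iso}.
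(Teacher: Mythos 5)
Your proposal is correct and follows essentially the same route as the paper: expand both products via the skein and puncture-skein relations and read off the coefficient of $h$, using $q-q^{-1}=\tfrac{h}{2}+O(h^2)$ and $q^{\frac12}-q^{-\frac12}=\tfrac{h}{4}+O(h^2)$ to recover the coefficients $\tfrac12$ and $\tfrac14$ in the Goldman bracket, with Proposition~\ref{prop:iso} supplying the isomorphism $\overline{p}$. The only difference is organizational: the paper sums over states rather than crossings and explicitly isolates the case where $\alpha$ or $\beta$ has a self-intersection at a puncture, first writing $\bar{\alpha}=v^{-1}(q^{\frac12}\bar{\alpha}_1+q^{-\frac12}\bar{\alpha}_2)$ and using the compatibility of $\{\ ,\ \}$ with the puncture-skein relation, a case your crossing-by-crossing bookkeeping should also spell out.
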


\begin{proof} This is an analogue of the arguments in \cite{BFK}. Given a diagram on the surface, we let $p^{\pm}(\mathcal{S})$ respectively be the number of positive and negative resolutions in the surface used to obtain the state $\mathcal{S}$, and let $v^{\pm}(\mathcal{S})$ respectively be the number of positive and negative resolutions at the punctures used to obtain $\mathcal{S}$. If none of $\alpha$ and $\beta$ has a self-intersection at the punctures, then keeping track of the crossings, we have
$$\{\alpha,\beta\}=\sum_{\mathcal{S}}\Big(\frac{1}{2}\big(p^+(\mathcal{S})-p^-(\mathcal{S})\big)+\frac{1}{4}\big(v^+(\mathcal{S})-v^-(\mathcal{S})\big)\Big)\mathcal{S,}$$
where the summation is taken over all states $\mathcal{S}$ obtained from resolving $\alpha\cup\beta$, and
\begin{equation*}
\begin{split}
\bar{\alpha}\cdot\bar{\beta}-\bar{\beta}\cdot\bar{\alpha}=\sum_{\mathcal{S}}\Big(&q^{(p^+(\mathcal{S})-p^-(\mathcal{S}))+\frac{1}{2}(v^+(\mathcal{S})-v^-(\mathcal{S}))}\\
&-q^{-(p^+(\mathcal{S})-p^-(\mathcal{S}))-\frac{1}{2}(v^+(\mathcal{S})-v^-(\mathcal{S}))}\Big)\mathcal{S},
\end{split}
\end{equation*}
in which the coefficient of $h$ is exactly $\{\alpha,\beta\}$. If one of $\alpha$ or $\beta$, say $\alpha$, has a self-intersection at a puncture $v\in V$, then we let $\alpha_1$ and $\alpha_2$ be the resolutions of $\alpha$ at $v$. Let ${\sigma}_1$ and ${\sigma}_2$ respectively be the set of states obtained by resolving $\alpha_1\cup\beta$ and $\alpha_2\cup\beta$, and let $n_{\mathcal{S}}=\frac{1}{2}(p^+(\mathcal{S})-p^-(\mathcal{S}))+\frac{1}{4}(v^+(\mathcal{S})-v^-(\mathcal{S}))$. By the previous calculation, we have  
\begin{equation*}
\begin{split}
\{\alpha,\beta\}=&\,\frac{1}{v}\big(\{\alpha_1,\beta\}+\{\alpha_2,\beta\}\big)=\,\frac{1}{v}\sum_{\mathcal{S}\in{\sigma}_1\cup\sigma_2}n_{\mathcal{S}}\,\mathcal{S}.
\end{split}
\end{equation*}
If $\bar{\alpha}_1$ and $\bar{\alpha}_2$ respectively are the positive and negative resolutions of $\bar{\alpha}$ at $v$, then by the puncture-skein relation $\bar{\alpha}=v^{-1}(q^{\frac{1}{2}}\bar{\alpha}_1+q^{-\frac{1}{2}}\bar{\alpha}_2)$, and we have that
\begin{equation*}
\begin{split}
\bar{\alpha}\cdot\bar{\beta}-\bar{\beta}\cdot\bar{\alpha}=&\,\frac{1}{v}\Big(q^{\frac{1}{2}}\big(\bar{\alpha}_1\cdot\bar{\beta}-\bar{\beta}\cdot\bar{\alpha}_1\big)+q^{-\frac{1}{2}}\big(\bar{\alpha}_2\cdot\bar{\beta}-\bar{\beta}\cdot\bar{\alpha}_2\big)\Big)\\
=&\,\frac{1}{v}\Big(q^{\frac{1}{2}}\sum_{\mathcal{S}\in{\sigma}_1}\big(q^{2n_{\mathcal{S}}}-q^{-2n_{\mathcal{S}}}\big)\mathcal{S}+q^{-\frac{1}{2}}\sum_{\mathcal{S}\in{\sigma}_2}\big(q^{2n_{\mathcal{S}}}-q^{-2n_{\mathcal{S}}}\big)\mathcal{S}\Big)
\end{split}
\end{equation*}
in which the coefficient of $h$ is $\{\alpha,\beta\}$.
\end{proof}

In particular, the proof of Theorem~\ref{dequantization} explains the relationship between the coefficients $q^{\pm\frac{1}{2}}$ in the puncture-skein relation used in the definition of $\AS_h(\s)$ and the coefficient $\frac{1}{4}$ in front of the puncture terms in the Goldman bracket on $\C(\s)$. Both of these choices were essential at some point in the well-definition of $\AS_h(\s)$ and $\{\ ,\ \}$ and turn out to be related to the geometric aspects of the theory described in the next section.



\section{Relationship with hyperbolic geometry}\label{section:3}

An essential aspect of the skein algebra is its relationship with the $SL_2$--character variety $\X(\s)$. A first step in this direction can be found in the work of Turaev\,\cite{Turaev2} and the full picture was unraveled by the work of Bullock, Frohman and  Kania-Bartoszy\'nska \cite{Bu,BFK,BFK2} and Przytycki and Sikora\,\cite{PS}. In our context, the corresponding framework will be that of the decorated Teichm\"uller space and the notion of $\lambda$-lengths, which as we will see can be understood as generalized trace functions.

\subsection{The decorated Teichm\"uller space and its Poisson structure}

As before, we let $\s$ be a surface with a set of punctures $V=\{v_1,\ldots,v_s\}$. In order to work in the hyperbolic setting, we suppose in addition that $\chi(\s)<0$. We consider the \emph{cusped Teichm\"uller space} $\T_c(\s)$ defined as the set of isotopy classes of complete hyperbolic metrics on $\s$ with finite area. A \emph{decoration} $r\in\R^V_{>0}$ is given by a choice of a positive real number $r_i=r(v_i)$ associated to each puncture. Geometrically, given a metric $m\in\T_c(\s)$, a decoration should be interpreted as a choice of a horocycle of length $r_i$ at each puncture $v_i$ of $\s$. The \emph{decorated Teichm\"uller space} $\T^d(\s)$, introduced by Penner in\,\cite{Penner1}, is then defined to be the space of \emph{decorated hyperbolic metrics} $(m,r)$, $m\in\T_c(\s)$. Topologically, it is the fiber bundle $\T^d(\s)=\T_c(\s)\times\R^V_{>0}$ over the cusped Teichm\"uller space.

One of the reasons for introducing decorations is to be able to define a notion of length of an arc between punctures. More precisely, Let $\alpha$ be an arc between two punctures of $\s$, possibly with self-intersections. Given a decorated hyperbolic metric $(m,r)$, consider a geodesic lift $\widetilde{\alpha}$ of $\alpha$ to the universal cover $\mathbb{H}^2$ of $(\s,m)$. The \emph{length} $l(\alpha)$ of $\alpha$ for $(m,r)$ is then defined to be the signed length of the segment of $\widetilde{\alpha}$ between the horocycles given by the decoration, where the sign is chosen to be positive if the horocycles do not intersect and negative if they do. A number of properties concerning lengths of arcs are in fact best expressed in terms of the associated \emph{$\lambda$-length} $\lambda(\alpha)=e^\frac{l(\alpha)}{2}$. In particular, if $a$, $b$, $c$ and $d$ are the consecutive sides of a square in $\s$ and $e$ and $e'$ are its diagonals, they satisfy the Ptolemy relation\,\cite{Penner1}
\[\lambda(e)\lambda(e')=\lambda(a)\lambda(c)+\lambda(b)\lambda(d).\]

Let $T$ be an \emph{ideal triangulation} of $\s$, that is, a maximal collection of isotopy classes of simple arcs between punctures in $\s$ which decomposes the surface into \emph{ideal triangles}. We let $E$ be the set of edges of $T$. If $\s$ is a sphere with at least three punctures or a surface of genus $g>0$ with at least one puncture then such a triangulation exists. In this case, the associated lengths $l(e)$, $e\in E$, form a coordinate system on $\T^d(\s)$. In these coordinates, Mondello\,\cite{Mondello} introduced a Poisson bi-vector field on $\T^d(\s)$ defined as follows: on a decorated hyperbolic surface $\s$, given an end of an edge $\alpha$ and an end of an edge $\beta$ meeting at a puncture $v$, we  define the \emph{generalized angle} from the end of $\alpha$ to the end of $\beta$ to be the length of the horocycle segment between them going in the positive direction for the orientation of $\s$. Then we let $\theta_v$ be the sum of the generalized angles from each end of $\alpha$ to each end of $\beta$ meeting at $v$, and we let $\theta'_v$ be the sum of the generalized angles from the ends of $\beta$ to the ends of $\alpha$ (see the figure below for an example).
\begin{figure}[htbp]\centering
\includegraphics[width=3cm]{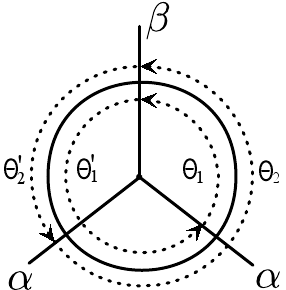}\\
\caption{here $\theta_v=\theta_1+\theta_2$ and $\theta'_v=\theta'_1+\theta'_2$.}
\end{figure}

We then consider the following bi-vector field
$$\Pi_{WP}=\frac{1}{4}\sum_{v\in V}\sum_{\substack{\alpha,\beta\in E\\ \alpha\cap\beta=v}}\frac{\theta'_v-\theta_v}{r(v)}\frac{\partial}{\partial l(\alpha)}\wedge\frac{\partial}{\partial l(\beta)}$$
on the decorated Teichm\"uller space. It is a Poisson bi-vector field which is directly related to the pull-back of the Weil-Petersson symplectic structure on $\T_c(\s)$ as described by Penner in \cite{Penner2} (see Prop 4.7 in \cite{Mondello}) and as such we call it the \emph{Weil-Petersson Poisson bi-vector} on $\T^d(\s)$. It can also be shown by a direct computation that this bi-vector is invariant under a diagonal exchange, and as a consequence is independent of the choice of the ideal triangulation $T$.

If $\alpha$ is a loop on $\s$ and $m\in\T_c(\s)$, we consider the quantity $\lambda(\alpha)=2\cosh\frac{l(\alpha)}{2}$ where $l(\alpha)$ is the length of the geodesic representative of $\alpha$ in $m$. Up to a sign, it is equal to the trace $tr(\rho(\alpha))$ of the monodromy representation $\rho\colon\pi_1(\s)\to PSL_2(\R)$ associated to $m$. We purposefully used the same notations as for $\lambda$-lengths and call $\lambda(\alpha)$ the \emph{generalized trace} of $\alpha$, where $\alpha$ can be an arc or a loop on $\s$.

The goal of this section is to construct a map from the algebra of curves $\C(\s)$ to the algebra of functions over $\T^d(\s)$ by associating to a generalized curve the product of the generalized traces of its components. One issue, however, is the fact that elements of $\C(\s)$ are not identified up to Reidemeister Move I, and as a consequence we need to introduce the following definition.

\begin{definition}  For a curve with a self-intersection as on the left of Figure \ref{curl-killing}, we call the topologically trivial loop $c$ a \emph{curl} and the intersection point $p$ its \emph{vertex}. If $\alpha$ is an arc or a non-null-homotopic  loop on $\s$, then the \emph{curling number} $c(\alpha)$ of $\alpha$ is the maximal number of \emph{one way Reidemeister Moves I} (Figure \ref{curl-killing}) that $\alpha$ carries. If $\alpha$ is a null-homotopic loop, then $c(\alpha)$ is defined to be the same number plus 1.
\end{definition}

\begin{figure}[htbp]\centering
\includegraphics[width=5cm]{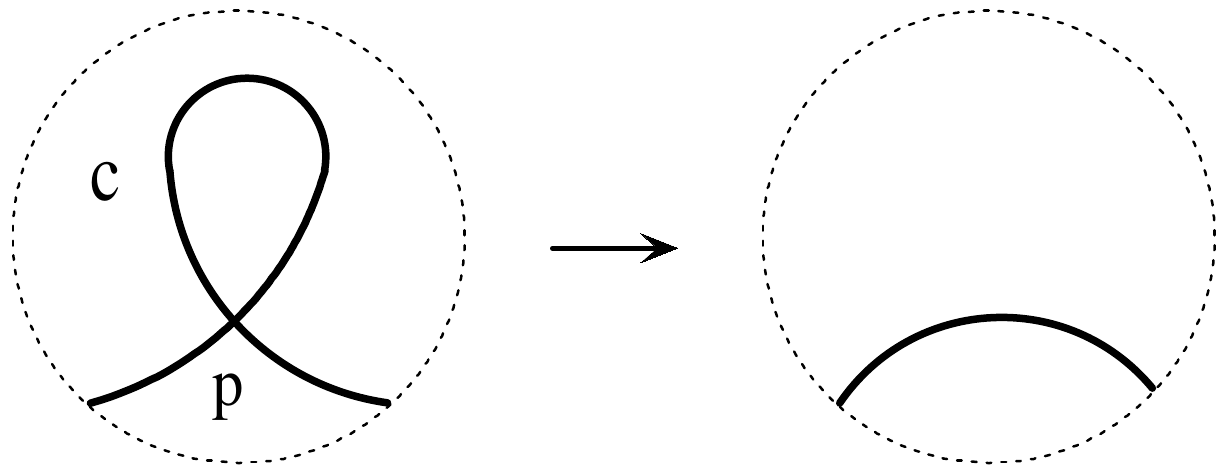}\\
\caption{One way Reidemeister Move I.}\label{curl-killing}
\end{figure}

\begin{example}By definition
$c(\bigodot)=0$ and $c(\bigcirc)=1$.
\end{example}

\begin{example} Since a geodesic minimizes self-intersections, its curling number is necessarily 0.
\end{example}

Using this definition, if $\alpha=\alpha_1\cup\cdots\cup\alpha_n$ is a generalized curve, that is, a union of equivalence classes of arcs and loops on $\s$, then we let $c(\alpha)=\sum_ic(\alpha_i)$ and $\lambda(\alpha)=\prod_i\lambda(\alpha_i)$. We recall that, if $(m,r)$ is a decorated hyperbolic metric, then $r(v)$ denotes the length of the horocycle at the puncture $v$ of $\s$.

\begin{theorem}\label{main} The map $\Phi\colon\C(\s)\rightarrow C^{\infty}(\T^d(\s))$ defined on the generators by 
$\Phi(v)=r(v)$ if $v$ is a puncture and
$\Phi(\alpha)=(-1)^{c(\alpha)}\lambda(\alpha)$ if $\alpha $ is a generalized curve is a well-defined Poisson algebra homomorphism with respect to the Goldman bracket $\{\ ,\ \}$ on $\C(\s)$ and the Weil-Petersson Poisson bracket on $C^{\infty}(\T^d(\s))$ associated to the bi-vector field $\Pi_{WP}$.
\end{theorem}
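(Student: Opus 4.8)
The plan is to prove the two required properties separately: that $\Phi$ is a well-defined $\mathbb{C}$--algebra homomorphism, and that it intertwines the Goldman bracket with the Weil-Petersson Poisson bracket. For the first part, I would check that $\Phi$ respects each of the defining relations $(1')$--$(4')$ of $\C(\s)$. The framing and puncture relations $(3')$, $(4')$ are immediate from the conventions $\Phi(\bigodot)=-2=-\lambda(\text{curl})$ via the curling number (a null-homotopic loop has curling number at least $1$) and from the fact that a loop bounding a puncture is freely homotopic to that puncture, whose horocycle length $r(v)$ satisfies $\lambda = 2\cosh(0)$... more precisely the monodromy around a cusp is parabolic with trace $\pm 2$, giving $\Phi = 2$. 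The substantive content of well-definition is the skein relation $(1')$ and the puncture-skein relation $(2')$: these must be reformulated as \emph{identities among $\lambda$-lengths and generalized traces}. The classical skein relation $\Phi(\text{crossing}) = \Phi(\text{resolution}_+) + \Phi(\text{resolution}_-)$ for two loops is precisely the $SL_2$ trace identity $\operatorname{tr}A\operatorname{tr}B = \operatorname{tr}AB + \operatorname{tr}AB^{-1}$ together with a careful sign analysis using the curling number (this is exactly how Bullock's homomorphism works, cf. \cite{Bu,PS}); when one or both strands are arcs, I expect a hybrid identity that degenerates to Penner's Ptolemy relation when both strands are arcs meeting at punctures, and to a mixed arc-loop length identity in between. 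These hyperbolic-geometric identities — the generalizations of the Ptolemy relation and the trace identities announced in the introduction — are the technical heart; I would derive them from the ``cosine laws'' of Guo--Luo \cite{GL} by lifting the relevant arcs and loops to $\mathbb{H}^2$, placing the horocycles, and computing the signed lengths of the resolved configurations in terms of the originals. The role of the central elements $v$ (and the factor $v^{-1}$ in $(2')$) is matched on the geometric side by the horocycle length $r(v) = \Phi(v)$ appearing in the denominator, which is forced precisely by how two horocyclic segments combine at a cusp.

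For the Poisson part, I would verify $\Phi(\{\alpha,\beta\}) = \{\Phi(\alpha),\Phi(\beta)\}_{WP}$. Since both brackets are bderivations in each slot (Leibniz) and $\Phi$ is an algebra map, it suffices to check the identity when $\alpha$ and $\beta$ are each a single arc or loop. The $\{v,\alpha\}=0$ case is automatic because $r(v)$ is a fiber coordinate that Poisson-commutes with everything in Mondello's bi-vector (the bi-vector only pairs $\partial/\partial l(e)$'s). For two loops, this is Wolpert's cosine formula \cite{Wolpert}: the Weil-Petersson bracket of two trace functions is a sum over intersection points of $\pm$ the difference of the resolved traces weighted by cosines of intersection angles, and one checks the cosine combination collapses exactly into the skein-type resolution that defines $\{\alpha,\beta\}$ on the loop-loop part, with coefficient $\tfrac12$. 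For an arc and a loop, or two arcs meeting at a puncture, I would need the ``generalized Wolpert formula'' — again extracted from the Guo--Luo cosine laws — expressing $\{l(\alpha),l(\beta)\}$ or $\{l(\alpha),\operatorname{tr}_\beta\}$ via generalized angles $\theta_v,\theta'_v$ at the punctures; the factor $\tfrac14$ and the $1/r(v)$ weighting in the puncture term of the Goldman bracket should then match $\Pi_{WP}$ term-by-term. The computation reduces, after differentiating, to showing that $\partial\lambda(\alpha)/\partial l(e)$ and the generalized-angle data assemble correctly; here the key lemma is a derivative formula for a generalized trace (or $\lambda$-length) with respect to the edge-length coordinates, which is essentially the content of Proposition~\ref{Laurent}.

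I expect the main obstacle to be the systematic bookkeeping of the mixed arc-loop and arc-arc identities: one must enumerate the configurations at a puncture (exactly the four cases tabulated before Theorem~\ref{Poisson}), lift each to $\mathbb{H}^2$, and check that the Guo--Luo cosine laws produce, on the nose, both the algebraic skein identity (for well-definition) and the infinitesimal angle identity (for the Poisson property), including every sign coming from the curling number. A secondary subtlety is independence of choices: $\Pi_{WP}$ is a priori defined via a fixed ideal triangulation $T$, so one should either invoke the triangulation-invariance already asserted in the text or verify that the identities used are triangulation-free. Once the hyperbolic identities are in hand, the passage to the statement of Theorem~\ref{main} is a matter of assembling the Leibniz reductions and matching coefficients.
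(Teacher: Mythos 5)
Your first half (well-definition of $\Phi$ as an algebra homomorphism) follows the paper's route: the relations $(1')$--$(4')$ are matched against length identities derived from the Guo--Luo cosine laws, with signs controlled by the curling number. Be aware, though, that the paper needs more than the geodesic identities themselves: since resolutions of geodesics are not geodesic and may acquire a curl (Lemma~\ref{lemma:nonzero}), the passage from the identities of Lemmas~\ref{3.1}--\ref{3.8} to the relations in $\C(\s)$ is done by an induction on the number of intersection points (Propositions~\ref{prop:induction} and \ref{prop:puncture}); your ``careful sign analysis'' is exactly this step and is where most of the bookkeeping lives.

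The Poisson half of your plan has a genuine gap. The bi-vector $\Pi_{WP}$ is defined only through the coordinates $l(e)$ attached to a fixed ideal triangulation, so to evaluate $\Pi_{WP}(\lambda(\alpha),\lambda(\beta))$ for arbitrary loops or arcs you must control how $\lambda(\alpha)$ depends on the edge lengths; your appeal to ``Wolpert's cosine formula'' for the loop--loop case is circular in this setting, because the statement that $\Pi_{WP}$ evaluated on trace functions is given by a cosine formula is precisely the Corollary the paper \emph{deduces from} Theorem~\ref{main} (Wolpert's classical formula concerns the Weil--Petersson structure on $\T_c(\s)$, not Mondello's triangulation-defined bi-vector, and says nothing about arcs). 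Likewise Proposition~\ref{Laurent} is a Laurent-polynomiality statement, not a derivative formula. The mechanism that actually closes this gap in the paper is different: one checks $\Phi(\{e,e'\})=4\,\Pi_{WP}(\lambda(e),\lambda(e'))$ \emph{only for edges of the triangulation}, where Lemma~\ref{3.4}\,(2) reproduces on the nose the coefficient $(\theta'_v-\theta_v)/r(v)$ appearing in the definition of $\Pi_{WP}$; then Lemma~\ref{product} (the product $\alpha\cdot\prod_e e^{i(\alpha,e)}$ is a polynomial $P_\alpha$ in the edges), the biderivation property of both brackets, the already-established homomorphism property of $\Phi$ (Theorem~\ref{main'}) and the non-vanishing of the edge $\lambda$--lengths propagate the identity first to $\{\alpha,e_0\}$ and then to $\{\alpha,\beta\}$ for arbitrary generalized curves, with no further hyperbolic geometry. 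Finally, $\{v,\alpha\}=0$ is not automatic: $r(v)$ is a nonconstant function of the $l(e)$'s, so $\Pi_{WP}(r(v),\cdot)=0$ requires Mondello's identification of $\ker\Pi_{WP}$ with $\pi^*(T^*\mathbb{R}^V_{>0})$, which the paper invokes explicitly. Without the edge-first reduction (or an independently proved first-variation formula for $\lambda(\alpha)$ in the coordinates $l(e)$), your term-by-term matching at intersection points cannot be carried out.
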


The remainder of this article will be dedicated to the proof of this theorem. The first step, done in Section~\ref{lengths}, will be to derive a series of lengths identities in hyperbolic geometry which generalize the Ptolemy relation, the trace identity and Wolpert's cosine formula for the Weil-Petersson Poisson bracket of length functions. In Section~\ref{algebra}, Together with an analysis of the behavior of the curling number under resolutions, half of these identities will be combined into generalized trace identities which imply that the map $\Phi$ is an algebra homomorphism. Finally, in Section~\ref{Poissonalg}, combined with a lemma about the expression of generalized trace functions in terms of the $\lambda$-lengths associated to the edges of a fixed ideal triangulation, the other half will be used to show that this homomorphism respects the Poisson structures.

\subsection{The lengths identities}\label{lengths}

In this section we are going to derive a series of identities involving geodesic lengths of curves and arcs between horocycles which are the heart of the proof of Theorem~\ref{main}. They rely on a set of ``cosine laws'' for various types of generalized hyperbolic triangles which can be found in Appendix A of \cite{GL}. We will use the results and notations found in this paper throughout this section. Some ``twisted versions'' of these laws are also needed and are included in Appendix \ref{A} of this paper. In Lemma~\ref{3.1} through \ref{3.4}, we study the relationship between the lengths of two intersecting curves $\alpha$ and $\beta$, the lengths of their possible resolutions and the (generalized) angle from $\alpha $ to $\beta$. There are several cases depending on whether $\alpha$ or $\beta$ is an arc or a loop and whether the intersection happens inside of $\s$ or at the puncture. Similarly, in Lemma~\ref{3.6} through \ref{3.8} we study the relationship between the length of a curve $\alpha$ with a self-intersection and the lengths of its possible resolutions. A complication here comes from the behavior of the curling number of the resolutions of a geodesic curve at a self-intersection. This is treated in Lemma~\ref{lemma:nonzero}.

Throughout this section, we will fix a decorated hyperbolic metric $(m,r)\in\T^d(\s)$. We recall that if $\alpha$ and $\beta$ are two geodesics on $\s$ for $(m,r)$, then the angle from $\alpha$ to $\beta$ at $p\in\alpha\cap\beta$ in $\s$ is the angle measured from $\alpha$ to $\beta$ for the orientation of $\s$, and the generalized angle from $\alpha$ to $\beta$ at $v\in\alpha\cap\beta$, $v\in V$ is the length of the horocycle segment measured from $\alpha$ to $\beta$ for the orientation of $\s$.

We start with the case of two loops intersecting in $\s$. The following formulae are well-known, Part (1) is the trace identity and Part (2) can be interpreted as Wolpert's cosine formula applied to trace functions. We nonetheless give a proof of these formulae for completeness.

\begin{lemma}\label{3.1} Let $\alpha$ and $\beta$ be two closed geodesics of lengths $a$ and $b$, and let $\theta$ be the angle from $\alpha$ to $\beta$ at $p\in \alpha\cap\beta$. If $x$ and $y$ are the lengths of the geodesic representatives of $\alpha_p\beta^+$ and $\alpha_p\beta^-$ respectively, then we have
\begin{enumerate}[(1)]
\item $\cosh\frac{x}{2}+\cosh\frac{y}{2}=2\cosh\frac{a}{2}\cosh\frac{b}{2}, $

\item $\cosh\frac{x}{2}-\cosh\frac{y}{2}=2\sinh\frac{a}{2}\sinh\frac{b}{2}\cos\theta.$
\end{enumerate}
\end{lemma}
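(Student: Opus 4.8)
The plan is to work directly in the hyperbolic plane using the Fenchel–Nielsen / axis picture for the two closed geodesics. Lift $\alpha$ and $\beta$ to geodesics $\widetilde\alpha$, $\widetilde\beta$ in $\mathbb{H}^2$ crossing at a point $\widetilde p$ over $p$, and let $A, B \in PSL_2(\mathbb{R})$ be hyperbolic elements of the monodromy representation $\rho\colon\pi_1(\s)\to PSL_2(\mathbb{R})$ having $\widetilde\alpha$, $\widetilde\beta$ as their respective axes, with translation lengths $a$ and $b$. Then $\mathrm{tr}\,A = \pm 2\cosh\frac a2$, $\mathrm{tr}\,B = \pm2\cosh\frac b2$, while $AB$ and $AB^{-1}$ represent (up to free homotopy) the two resolutions $\alpha_p\beta^+$ and $\alpha_p\beta^-$ at $p$; their translation lengths are $x$ and $y$, so $\mathrm{tr}\,AB = \pm2\cosh\frac x2$ and $\mathrm{tr}\,AB^{-1} = \pm 2\cosh\frac y2$. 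The key identity is the classical $SL_2$ trace relation $\mathrm{tr}\,A\cdot\mathrm{tr}\,B = \mathrm{tr}\,AB + \mathrm{tr}\,AB^{-1}$, which after fixing signs consistently (choosing lifts to $SL_2(\mathbb{R})$ so that all four traces are positive, using that the resolutions of two geodesics are again geodesics and hence have curling number zero) gives Part (1).

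For Part (2) the plan is to compute the same traces concretely in terms of $a$, $b$, and $\theta$. Conjugate so that $\widetilde p$ is a fixed convenient point and the axes of $A$ and $B$ pass through it making angle $\theta$; write $A$ and $B$ explicitly as products of a translation along a fixed geodesic and a rotation by $\theta$ about $\widetilde p$. A direct matrix computation then yields
\begin{equation*}
\mathrm{tr}\,AB = 2\cosh\tfrac a2\cosh\tfrac b2 + 2\sinh\tfrac a2\sinh\tfrac b2\cos\theta,
\qquad
\mathrm{tr}\,AB^{-1} = 2\cosh\tfrac a2\cosh\tfrac b2 - 2\sinh\tfrac a2\sinh\tfrac b2\cos\theta,
\end{equation*}
(the sign of $\cos\theta$ being pinned down by the convention that $\theta$ is measured from $\alpha$ to $\beta$ for the orientation of $\s$, which distinguishes the positive from the negative resolution). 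Subtracting, and again identifying $\mathrm{tr}\,AB = 2\cosh\frac x2$, $\mathrm{tr}\,AB^{-1} = 2\cosh\frac y2$, gives Part (2). Adding recovers Part (1), so the two computations are really one.

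Alternatively — and this is probably the cleaner route to record, since the rest of Section~\ref{lengths} is organized around the ``cosine laws'' of Guo–Luo — one can avoid explicit matrices by applying the hyperbolic law of cosines in a suitable (possibly degenerate) hyperbolic triangle. Namely, the perpendicular from the common point, or rather the right-angled hexagon/pentagon built from the common perpendiculars of the relevant axes, relates the half-lengths $a/2$, $b/2$, $x/2$, $y/2$ and the angle $\theta$ by exactly the cosine law in Appendix A of \cite{GL}; this is the template that Lemmas~\ref{3.2}–\ref{3.4} will follow. Either way, the identification of the right-hand sides with $2\cosh\frac x2$ and $2\cosh\frac y2$ requires knowing that $\alpha_p\beta^\pm$ are represented by $AB$ and $AB^{-1}$ and that these are again simple enough that their geodesic length is governed by the trace with no curling correction.

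The main obstacle I expect is bookkeeping of signs and orientations: (i) choosing the $SL_2(\mathbb{R})$ lifts of $A$, $B$, $AB$, $AB^{-1}$ coherently so that all four traces come out with a common sign (otherwise the trace identity $\mathrm{tr}\,A\,\mathrm{tr}\,B = \mathrm{tr}\,AB+\mathrm{tr}\,AB^{-1}$ would produce a spurious minus sign); and (ii) making sure that $AB$ corresponds to the \emph{positive} resolution $\alpha_p\beta^+$ (turn left) and $AB^{-1}$ to $\alpha_p\beta^-$, consistently with the orientation convention fixed at the start of the section, so that the $\cos\theta$ term carries the correct sign in Part (2). Once these conventions are nailed down, both identities follow from one short matrix computation (or one application of the cosine law), so the only real content is the correct dictionary between resolutions, group elements, and signs.
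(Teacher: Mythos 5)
Your proposal is correct, but it takes a genuinely different route from the paper. The paper proves both identities synthetically in the universal cover: it takes two consecutive lifts $B_1,B_2$ of $p$ on a lift of $\beta$, shows by an isometric-triangle argument that the midpoint $M$ of $\overline{B_1B_2}$ lies on lifts of both resolutions with $|A_1M|=\frac{x}{2}$ and $|A_2M|=\frac{y}{2}$, and then applies the hyperbolic law of cosines twice, in triangles with sides $\frac{a}{2},\frac{b}{2},\frac{x}{2}$ and angle $\pi-\theta$, respectively $\frac{a}{2},\frac{b}{2},\frac{y}{2}$ and angle $\theta$; summing and subtracting the two relations gives (1) and (2) at once. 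Your main route instead goes through the monodromy: the two smoothings correspond to $AB$ and $AB^{-1}$, Part (1) is the classical $SL_2$ trace identity, and Part (2) is the explicit computation $\mathrm{tr}(AB)=2\cosh\frac{a}{2}\cosh\frac{b}{2}+2\sinh\frac{a}{2}\sinh\frac{b}{2}\cos\theta$ with $A,B$ written as translations along axes through a common point. This is shorter, and your explicit formula in fact disposes of the sign worry you raise: with positive-trace lifts of $A$ and $B$ one gets $\mathrm{tr}(AB),\,\mathrm{tr}(AB^{-1})\geq 2\cosh\frac{a-b}{2}>2$ for $0<\theta<\pi$, so both resolutions are hyperbolic, their geodesic representatives exist, and all four traces are positive, leaving only the (orientation-convention) matching of $\alpha_p\beta^+$ with $AB$ to check; the curling-number caveat is not needed here since the lemma concerns only lengths of geodesic representatives of free homotopy classes. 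What the paper's synthetic template buys is uniformity: the same lift-and-cosine-law construction extends directly to the decorated cases of Lemmas~\ref{3.2}--\ref{3.4} and \ref{3.6}--\ref{3.8}, where the curves are arcs with lengths measured between horocycles and a trace computation is not immediately available (and note the paper's configuration uses ordinary triangles with vertices at lifts of $p$, not the right-angled hexagons of your alternative sketch, though the resulting cosine-law relations coincide with your matrix formulas).
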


\begin{figure}[htbp]\centering
\includegraphics[width=10.5cm]{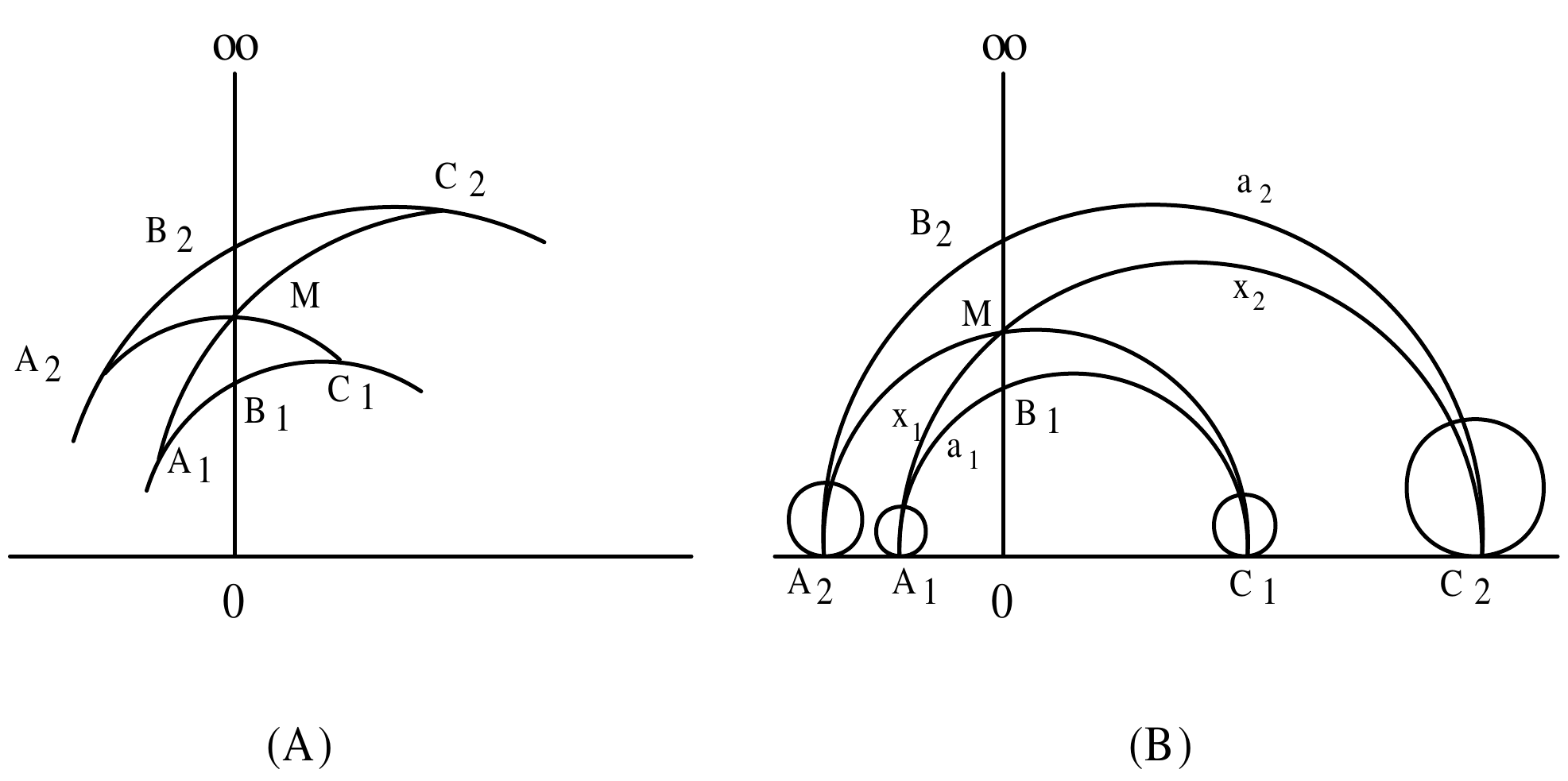}\\
\caption{}\label{curve-curve}
\end{figure}

\begin{proof} We consider Figure \ref{curve-curve} (A). Let $\overline{0\infty}$ be a lift of the geodesic $\beta$ in the universal cover $\mathbb{H}^2$ of $\s$. Let $\{B_i\}_{i\in\mathbb{Z}}$ be the lifts of $p$ on $\overline{0\infty}$ so that $|B_iB_{i+1}|=b$. Let $A_i$ and $C_i$ for $i=1,2$ be the points on the lift of the geodesic $\alpha$ passing through $B_i$ so that $|A_iB_i|=|B_iC_i|=\frac{a}{2},$ hence $|A_iC_i|=a$ and $\overline{A_iC_i}$ is a lift of $\alpha$ for $i=1,2$. Now take the mid-point $M$ of $\overline{B_1B_2}$, and connect $A_1$ and $C_2$ to $M$ by geodesics. Since $|A_1B_1|=|B_2C_2|=\frac{a}{2}$ and $|B_1M|=|MB_2|=\frac{b}{2}$, and $\angle A_1B_1M=\angle MB_2C_2=\pi-\theta$, the triangles $A_1B_1M$ and $MB_2C_2$ are isometric, hence the anlges $\angle A_1MB_1=\angle B_2MC_2$ and $\angle B_1A_1M=\angle MC_2B_2$. Therefore, the points $A_1$, $M$ and $C_2$ are on the geodesic representing a lift of $\alpha_p\beta^+$, and $|A_1M|=|MC_2|=\frac{x}{2}$. By the same argument, we have that $A_2$, $M$ and $C_1$ are on a lift of $\alpha_p\beta^-$ and $|A_2M|=|MC_1|=\frac{y}{2}.$ Applying the cosine law to the triangles $A_1B_1M$ and $A_2B_2M$ respectively, we have \[\cos(\pi-\theta)=\frac{-\cosh\frac{x}{2}+\cosh\frac{a}{2}\cosh\frac{b}{2}}{\sinh\frac{a}{2}\sinh\frac{b}{2}}\ \text{ and }\ \cos\theta=\frac{-\cosh\frac{y}{2}+\cosh\frac{a}{2}\cosh\frac{b}{2}}{\sinh\frac{a}{2}\sinh\frac{b}{2}}\,.\]
 Since $\cos(\pi-\theta)=-\cos\theta$, the sum of the two equalities implies Part (1) and the difference implies Part (2).
\end{proof}

\begin{lemma}\label{3.2} Let $\alpha$ be a geodesic arc of length $a$ and let $\beta$ be a closed geodesic of length $b$. Let $\theta$ be the angle from $\alpha$ to $\beta$ at $p\in \alpha\cap\beta$.  If $x$ and $y$ are the lengths of geodesic representatives of the arcs $\alpha_p\beta^+$ and $\alpha_p\beta^-$ respectively, then we have
\begin{enumerate}[(1)]
\item $e^{\frac{x}{2}}+e^{\frac{y}{2}}=2e^{\frac{a}{2}}\cosh\frac{b}{2},$
\item $e^{\frac{x}{2}}-e^{\frac{y}{2}}=2 e^{\frac{a}{2}}\sinh\frac{b}{2}\cos\theta\,.$
\end{enumerate}
\end{lemma}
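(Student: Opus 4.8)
The plan is to mimic the construction used in the proof of Lemma~\ref{3.1}, but now carefully track the fact that $\alpha$ is an \emph{arc} ending at a puncture, so that one of its endpoints lies on a horocycle rather than being an interior geodesic point. Concretely, I would lift $\beta$ to a geodesic $\overline{0\infty}$ in $\mathbb{H}^2$, let $\{B_i\}$ be the lifts of the intersection point $p$ along $\overline{0\infty}$ with $|B_iB_{i+1}|=b$, and through each $B_i$ draw the lift of $\alpha$. Because $\alpha$ is an arc, each such lift has one end going out to a point on $\partial\mathbb{H}^2$ decorated by a horocycle; one should choose the lifts compatibly so that the relevant horocycle is the same decoration throughout. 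Taking the midpoint $M$ of $\overline{B_1B_2}$ and joining it to the appropriate endpoints, the same isometry argument as in Lemma~\ref{3.1} (the two ``half'' triangles $A_1B_1M$ and $MB_2B_1$-type pieces are isometric, with the angle $\pi-\theta$ at the intersection) shows that the broken path through $M$ straightens to a geodesic lift of $\alpha_p\beta^{\pm}$.

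The key computational step is then to apply the correct ``cosine law'' from Appendix~A of~\cite{GL} — not the cosine law for an ordinary hyperbolic triangle with three finite sides, but the one for a \emph{generalized triangle} with one ideal vertex decorated by a horocycle, i.e.\ the law that governs $\lambda$-lengths. This is precisely the version in which hyperbolic cosines get replaced by exponentials: for such a decorated triangle with the ideal vertex opposite the side of length $b/2$, and with the two finite sides being half of $\alpha$'s $\lambda$-length contribution and half of $(\alpha_p\beta^{\pm})$'s, the law reads (schematically) $e^{\,\text{half-}\lambda\text{-length of resolution}} = e^{\,\text{half-}\lambda\text{-length of }\alpha}\big(\cosh\tfrac{b}{2} \mp \sinh\tfrac{b}{2}\cos\theta\big)$, the two signs coming from the angles $\pi-\theta$ and $\theta$ at $p$. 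Writing these two equations out explicitly for $\alpha_p\beta^+$ (angle $\pi-\theta$) and $\alpha_p\beta^-$ (angle $\theta$), adding them gives Part~(1) and subtracting them gives Part~(2), exactly as in Lemma~\ref{3.1}, with the replacement $\cosh \leadsto \exp$ reflecting the passage from a closed geodesic endpoint to a horocyclic (cusped) endpoint.

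The main obstacle I expect is bookkeeping around the horocyclic endpoint: one must make sure that the signed length $l(\alpha)$ — measured between horocycles — is exactly the quantity that enters the decorated cosine law, and that the midpoint construction produces a geodesic segment of $\alpha_p\beta^{\pm}$ whose length is measured between the \emph{same} horocycles (so that $e^{x/2}$ and $e^{y/2}$ are genuinely the $\lambda$-lengths of the resolved arcs, not shifted by some horocyclic displacement). Once the decorated triangle is set up so that the ideal vertex carries a fixed horocycle, the corresponding law from Appendix~A of~\cite{GL} applies verbatim, and the remaining steps are the same two-line sum/difference manipulation as before. A brief remark at the end would note that the signs $e^{x/2}, e^{y/2}$ are automatically positive, consistent with $\lambda$-lengths, and that taking $\theta\to$ its complement corresponds to switching the roles of the two resolutions.
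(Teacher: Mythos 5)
Your geometric setup is essentially the paper's: lift $\beta$ to $\overline{0\infty}$, take the lifts $B_1,B_2$ of $p$, use the congruence of the two generalized triangles of type $(0,1,1)$ cut off by the point $M$ on $\overline{0\infty}$ to see that the geodesic $\overline{A_1C_2}$ representing $\alpha_p\beta^{\pm}$ passes through the midpoint of $\overline{B_1B_2}$ (the paper defines $M$ as the intersection of $\overline{A_1C_2}$ with $\overline{0\infty}$ and deduces $|B_1M|=|MB_2|=\tfrac b2$; your order of argument is the reverse but equivalent), and then apply the decorated cosine law with angles $\pi-\theta$ and $\theta$ and take sum and difference. So the route is the same. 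However, there is a genuine gap at the key computational step, precisely at the point you labelled ``bookkeeping'' and then declared to apply ``verbatim''. The half-triangle $A_1B_1M$ has truncated sides $a_1=d(B_1,H_{A_1})$ and $x_1=d(M,H_{A_1})$, where $H_{A_1}$ is the lifted horocycle at $A_1$; the other half-triangle has truncated sides $a_2$ and $x_2$. In general $a_1\neq a_2$ and $x_1\neq x_2$: only the sums satisfy $a_1+a_2=a$ and $x_1+x_2=x$, because the two ideal vertices carry different (lifts of) horocycles. Consequently your schematic law $e^{x/2}=e^{a/2}\bigl(\cosh\tfrac b2+\sinh\tfrac b2\cos\theta\bigr)$ is not an instance of any single cosine law from Appendix A of \cite{GL}: the law applied to $A_1B_1M$ reads $\cos(\pi-\theta)=\frac{-e^{x_1}+e^{a_1}\cosh\frac b2}{e^{a_1}\sinh\frac b2}$, involving $x_1,a_1$, not $x/2,a/2$.

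The missing ingredient is the identity $x_1-a_1=x_2-a_2\ (=\tfrac{x-a}{2})$, which is what allows the ratio $e^{x_1-a_1}$ in the cosine law to be replaced by $e^{(x-a)/2}$ and hence produces the symmetric formulas in the statement. The paper obtains it by applying the sine law to the two congruent triangles $A_1B_1M$ and $C_2B_2M$, giving $\frac{e^{a_1}}{e^{x_1}}=\frac{\sin\angle A_1MB_1}{\sin\angle A_1B_1M}=\frac{\sin\angle C_2MB_2}{\sin\angle C_2B_2M}=\frac{e^{a_2}}{e^{x_2}}$; alternatively one can note that $x_i-a_i$ is unchanged if the horocycle at the ideal vertex is rescaled, and then transport the horocycle by the congruence. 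Either way, this is the one genuinely new point in passing from Lemma \ref{3.1} (where the analogous equality $|A_1B_1|=|B_2C_2|=\tfrac a2$ is forced by taking points at distance $\tfrac a2$ along the closed geodesic) to the arc case, and your proposal does not supply it. With that step added, the rest of your argument goes through and coincides with the paper's proof.
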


\begin{proof} Let us look at Figure \ref{curve-curve} (B). Let $\overline{0\infty}$ be a lift of $\beta$ in the universal cover $\mathbb{H}^2$. Let $\{B_i\}_{i\in\mathbb{Z}}$ be the lifts of $p$ on $\overline{0\infty}$ so that $|B_i,B_{i+1}|=b$, and let $A_i$ and $C_i$ for $i=1,2$ be the end points of the lifts of $\beta$ passing through $B_i$. Let $M$ be the intersection of $\overline{0\infty}$ and the geodesic connecting $A_1$ and $C_2$. Let $a_1$ be the distance from $B_i$ to the horocycle centered at $A_i$ and let $a_2$ be the distance from $B_i$ to $C_i$ for $i=1,2$ so that $a_1+a_2=a$, and let $x_1$ be the distance from $M$ to the horocycle centered at $A_1$ and let $x_2$ be the distance from $M$ to the horocycle centered at $C_2$ so that $x_1+x_2=x$. Since $\angle A_1B_1M=\angle C_2B_2M$ and $\angle A_1MB_1=\angle C_2MB_2$, we have that the ideal triangles $A_1B_1M$ and $C_2B_2M$ of type $(0,1,1)$ are isometric which implies that $|B_1M|=|MB_2|=\frac{b}{2}$. Applying the cosine law to the triangle $A_1B_1M$, we have
\[\cos(\pi-\theta)=\frac{-e^{x_1}+e^{a_1}\cosh\frac{b}{2}}{e^{a_1}\sinh\frac{b}{2}}.\] Applying the sine law to the triangles $A_1B_1M$ and $C_2B_2M$, we have \[\frac{e^{a_1}}{e^{x_1}}=\frac{\sin\angle A_1MB_1}{\sin\angle A_1B_1M}=\frac{\sin\angle  C_2MB_2}{\sin\angle C_2B_2M}=\frac{e^{a_2}}{e^{x_2}}\ \text{ hence }\ \frac{a_2-a_1}{2}=\frac{x_2-x_1}{2}\,.\]
Using this, the cosine law above becomes \[\cos(\pi-\theta)=\frac{-e^{\frac{x}{2}}+e^{\frac{a}{2}}\cosh\frac{b}{2}}{e^{\frac{a}{2}}\sinh\frac{b}{2}}.\] By the same argument applied to the generalized triangles $A_2B_2M$ and $B_1C_1M$, we obtain \[\cos\theta=\frac{-e^{\frac{y}{2}}+e^{\frac{a}{2}}\cosh\frac{b}{2}}{e^{\frac{a}{2}}\sinh\frac{b}{2}}\,.\]
Part (1) is obtained by taking the sum of the two equalities above and Part (2) by taking their difference.\end{proof}

The following lemma involving $\lambda$--length can be found in \cite{Penner2} (Lemma A1). Part (1) was proved first by Penner in \cite{Penner1} and is the celebrated Ptolemy relation.

\begin{lemma}(\textbf{Penner\,\cite{Penner2}})\label{3.3} Let $\alpha$ and $\beta$ be two geodesic arcs of lengths $a$ and $b$, and let $\theta$ be the angle from $\alpha$ to $\beta$ at $p\in\alpha\cap\beta$. If $x$ and $x'$ respectively are the lengths of the geodesic representatives of the components of $\alpha_p\beta^+$, and $y$ and $y'$ respectively are the lengths of the geodesic representatives of the components of $\alpha_p\beta^-$, then we have
\begin{enumerate}[(1)]
\item $e^{\frac{x}{2}}e^{\frac{x'}{2}}+e^{\frac{y}{2}}e^{\frac{y'}{2}}=e^{\frac{a}{2}}e^{\frac{b}{2}},$

\item $e^{\frac{x}{2}}e^{\frac{x'}{2}}-e^{\frac{y}{2}}e^{\frac{y'}{2}}= e^{\frac{a}{2}}e^{\frac{b}{2}}\cos\theta.$\hfill$\square$
\end{enumerate}
\end{lemma}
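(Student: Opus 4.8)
The plan is to deduce both identities from the geometry of one decorated ideal quadrilateral in $\mathbb{H}^2$, in the same spirit as Lemmas~\ref{3.1} and~\ref{3.2}. Fix $(m,r)\in\T^d(\s)$ and choose geodesic lifts $\widetilde\alpha,\widetilde\beta$ to $\mathbb{H}^2$ meeting at a lift $\widetilde p$ of $p$. Pushing the decorating horocycles down to the four ideal endpoints of $\widetilde\alpha\cup\widetilde\beta$ produces a decorated ideal quadrilateral with vertices $P_1,P_2,P_3,P_4$ in cyclic order on $\partial\mathbb{H}^2$, where $\widetilde\alpha=\overline{P_1P_3}$ and $\widetilde\beta=\overline{P_2P_4}$ are its two diagonals (they separate each other on the boundary circle, so such a labelling exists). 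The four sides $\overline{P_1P_2},\overline{P_2P_3},\overline{P_3P_4},\overline{P_4P_1}$ are geodesic lifts of the four arc components obtained by resolving $\alpha$ and $\beta$ at $p$, with truncated lengths equal to the lengths of those components; from the left/right-turn convention defining $\alpha_p\beta^{\pm}$ together with the orientation of $\s$, one checks that one pair of opposite sides, say of lengths $x$ and $x'$, lifts the two components of $\alpha_p\beta^+$ and the complementary pair, of lengths $y$ and $y'$, lifts the two components of $\alpha_p\beta^-$.

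The point $\widetilde p$ subdivides this quadrilateral into the four generalized triangles $\widetilde pP_1P_2$, $\widetilde pP_2P_3$, $\widetilde pP_3P_4$, $\widetilde pP_4P_1$, each having a single finite vertex, at $\widetilde p$, and two decorated ideal vertices. Write $a_1,a_2$ for the signed lengths of the two subsegments of $\widetilde\alpha$ from $\widetilde p$ to the horocycles at $P_1,P_3$, so that $a_1+a_2=a$, and similarly $b_1,b_2$ for the subsegments of $\widetilde\beta$ at $P_2,P_4$, so that $b_1+b_2=b$. In $\widetilde pP_1P_2$ the two sides at $\widetilde p$ have lengths $a_1$ and $b_1$ (the distances from $\widetilde p$ to the two horocycles) and the side opposite $\widetilde p$ is $\overline{P_1P_2}$, of length exactly $x$; the other three triangles are similar. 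I would then apply the cosine law for a generalized triangle with two decorated ideal vertices and one finite vertex of angle $\phi$ --- the appropriate instance of the laws collected in Appendix~A of \cite{GL} --- which expresses the $\lambda$-length of the side opposite $\phi$ as $e^{(d_1+d_2)/2}\sin\frac{\phi}{2}$, where $d_1,d_2$ are the distances from the finite vertex to the two horocycles. Here the two triangles lifting $\alpha_p\beta^+$ have angle $\pi-\theta$ at $\widetilde p$ while the two lifting $\alpha_p\beta^-$ have angle $\theta$; multiplying the two equations coming from the first pair and using $a_1+a_2=a$, $b_1+b_2=b$ gives $e^{x/2}e^{x'/2}=e^{a/2}e^{b/2}\cos^2\frac{\theta}{2}$, and the second pair gives $e^{y/2}e^{y'/2}=e^{a/2}e^{b/2}\sin^2\frac{\theta}{2}$. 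Adding these yields Part~(1) and subtracting them yields Part~(2). (Part~(1) alone also follows at once from the $SL_2(\mathbb{R})$ description of decorated horocycles by vectors in $\R^2$ with $\lambda$-length the absolute value of a determinant, which turns it into the Grassmann--Pl\"ucker relation for a $2\times4$ matrix, the signs being dictated by the cyclic order of $P_1,\dots,P_4$; and of course one may simply quote \cite{Penner2}, as the authors do.)

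The main obstacle is orientation bookkeeping: one has to verify that it is the pair of triangles with angle $\pi-\theta$ at $\widetilde p$, rather than the pair with angle $\theta$, that lifts the positive resolution $\alpha_p\beta^+$ --- this is exactly what produces the sign $+\cos\theta$ in Part~(2) and keeps the conventions consistent with Lemmas~\ref{3.1} and~\ref{3.2}. One must also allow the $a_i,b_i$ to be signed lengths so that $a_1+a_2=a$ and $b_1+b_2=b$ survive when the horocycles are large, and extract (or quickly rederive, by placing the two ideal vertices at $0$ and $\infty$ in the upper half-plane) the precise form of the two-ideal-vertex cosine law from \cite{GL}. None of this is deep, but it is where the care lies.
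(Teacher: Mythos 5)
Your argument is correct in outline, but note that the paper does not prove Lemma~\ref{3.3} at all: it simply quotes Penner (\cite{Penner2}, Lemma~A1), which is why the statement ends with $\square$. What you have written is essentially a reconstruction of Penner's derivation, carried out in exactly the style the paper uses for the neighboring results: lift to $\mathbb{H}^2$, split the configuration at the lift of the intersection point into generalized triangles, and apply the cosine law for a triangle of type $(0,0,1)$, namely $e^{\ell/2}=e^{(d_1+d_2)/2}\sin\frac{\phi}{2}$, which is the same law invoked in the proof of Lemma~\ref{3.7} (there in the form $e^{y/2}=e^{(a_1+a_2)/2}\sin\frac{\theta}{2}$). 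Your identities $e^{x/2}e^{x'/2}=e^{(a+b)/2}\cos^2\frac{\theta}{2}$ and $e^{y/2}e^{y'/2}=e^{(a+b)/2}\sin^2\frac{\theta}{2}$ do give both parts at once, and they are in fact slightly stronger than the lemma as stated. The two points you flag are indeed the only places requiring care: the identification of the pair of opposite sides subtending the angle $\pi-\theta$ at $\widetilde p$ with the components of $\alpha_p\beta^+$ (which must match the convention that produces $\cos(\pi-\theta)$ for the positive resolution in the proofs of Lemmas~\ref{3.1} and~\ref{3.2}), and the use of signed distances $a_i,b_i$ so that $a_1+a_2=a$, $b_1+b_2=b$ persist when $\widetilde p$ lies inside a horoball; the cosine law extends to that case. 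Your parenthetical alternative via the $2\times 4$ Pl\"ucker relation for decorated horocycles only yields Part~(1), as you say, so the triangle decomposition (or the citation to \cite{Penner2}) is still needed for Part~(2). Compared with simply citing Penner, your route has the virtue of making the paper's treatment of Lemmas~\ref{3.1}--\ref{3.4} uniform, at the cost of the orientation bookkeeping you describe.
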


\begin{lemma}\label{3.4} Let $\alpha$ and $\beta$ be two geodesic arcs of lengths $a$ and $b$ each having exactly one end at a puncture $v$, and let $\theta$ be the generalized angle from $\alpha$ to $\beta$ and $\theta'$ be the generalized angle from $\beta$ to $\alpha$. Let $r$ be the length of the horocycle centered at $v$, and let $x$ and $y$ be the lengths of the geodesic representatives of $\alpha_v\beta^+$ and $\alpha_v\beta^-$ respectively. Then we have
\begin{enumerate}[(1)]
\item $e^{\frac{x}{2}}+e^{\frac{y}{2}}=re^{\frac{a}{2}}e^{\frac{b}{2}},$

\item $e^{\frac{x}{2}}-e^{\frac{y}{2}}=(\theta'-\theta)e^{\frac{a}{2}}e^{\frac{b}{2}}.$
\end{enumerate}
\end{lemma}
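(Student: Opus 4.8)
The plan is to read off both identities directly from the universal cover $\mathbb{H}^2$ of $(\s,m)$, working only with the elementary Euclidean geometry of horocycles; in contrast to Lemmas~\ref{3.1}--\ref{3.3} no cosine law is really needed here, although one could alternatively apply the cosine law for decorated ideal triangles from Appendix~A of \cite{GL} to the triangle whose vertices are $v$ and the two other endpoints of $\alpha$ and $\beta$. First I would normalize the upper half-plane model so that a lift of $v$ sits at $\infty$, the horocycle of the decoration at $v$ lifts to the line $L=\{\operatorname{Im}z=1\}$, and the parabolic deck transformation $\gamma_v$ fixing $\infty$ is $z\mapsto z+r$. This last normalization encodes exactly the hypothesis that the decorating horocycle at $v$ has length $r=r(v)$, since the hyperbolic length of $L/\langle\gamma_v\rangle$ is $\int_0^r dx=r$.

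Next I would fix the lift $\widetilde\alpha=\{\operatorname{Re}z=0\}$ of $\alpha$, running from $\infty$ to $0\in\R$, and lift the decorating horocycle at the other endpoint of $\alpha$ to the horocycle tangent to $\R$ at $0$; since running down $\widetilde\alpha$ from its intersection with $L$ (at height $1$) to the top of that horocycle has signed length $a$, this horocycle has Euclidean diameter $e^{-a}$, and likewise the decorating horocycle at the other endpoint of $\beta$ lifts to a horocycle of Euclidean diameter $e^{-b}$. The lifts of $\beta$ emanating from $\infty$ meet $\R$ along an orbit $\{c+nr:n\in\mathbb Z\}$; I would choose the representative with $0<c<r$, so that $\widetilde\alpha$ separates the two consecutive lifts at $c$ and $c-r$. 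Following the two ways of turning around the cusp $v=\infty$, the resolution $\alpha_v\beta^+$ lifts to the geodesic from $0$ to $c$ and $\alpha_v\beta^-$ to the geodesic from $0$ to $c-r$ (or vice versa). Then the standard formula for the $\lambda$-length between horocycles based at $p,q\in\R$ with Euclidean diameters $d_p,d_q$, namely $\lambda=|p-q|\,(d_pd_q)^{-1/2}$ (see \cite{Penner1}), gives
\[
e^{x/2}=\lambda(\alpha_v\beta^+)=c\,e^{(a+b)/2},\qquad e^{y/2}=\lambda(\alpha_v\beta^-)=(r-c)\,e^{(a+b)/2}.
\]

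For the angles, $\widetilde\alpha$ meets $L$ at $(0,1)$ and the chosen lift of $\beta$ meets $L$ at $(c,1)$, so, $L$ lying at height $1$, the horocyclic segment joining these two points has length $c$ in one direction and $r-c$ in the other; matching the ``positive direction for the orientation of $\s$'' with the turning conventions used for $\alpha_v\beta^{\pm}$ identifies these with $\theta'=c$ and $\theta=r-c$. Adding and subtracting the two displayed equalities then yields $e^{x/2}+e^{y/2}=r\,e^{a/2}e^{b/2}$ and $e^{x/2}-e^{y/2}=(\theta'-\theta)\,e^{a/2}e^{b/2}$, which are (1) and (2). I expect the only genuinely delicate point to be the orientation bookkeeping in this last step together with the correct matching of the two resolutions: one must verify that the resolution turning left at $v$ lands on the lift of $\beta$ lying on the side of $\widetilde\alpha$ opposite to the positively oriented horocyclic segment from $\alpha$ to $\beta$, since reversing this would flip the sign in (2). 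Identity (1), by contrast, is insensitive to all of these choices, and everything else in the argument is a routine horocycle computation.
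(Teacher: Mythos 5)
Your argument is correct, and it proves the lemma by the same underlying geometry as the paper --- lift to the universal cover with the cusp at $\infty$, identify the two resolutions with the geodesics joining the foot of the lift of $\alpha$ to the two adjacent lifts of $\beta$, and observe that the two generalized angles are the two horocyclic segments whose lengths add up to $r$ --- but you reach the key identity by a different mechanism. The paper applies the cosine law for decorated ideal triangles from Appendix~A of \cite{GL} to the two triangles $CA_2B_1$ and $CA_1B_1$, obtaining $\theta'=e^{\frac{x-a-b}{2}}$ and $\theta=e^{\frac{y-a-b}{2}}$ and then summing/subtracting with $r=\theta+\theta'$; you instead normalize the half-plane model ($v$ at $\infty$, decoration horocycle at height $1$, deck translation $z\mapsto z+r$), read off the horocycle diameters $e^{-a}$, $e^{-b}$ from the signed lengths, and use Penner's formula $\lambda=|p-q|(d_pd_q)^{-1/2}$ to get $e^{x/2}$ and $e^{y/2}$ as $c\,e^{(a+b)/2}$ and $(r-c)\,e^{(a+b)/2}$. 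This is in effect a self-contained re-derivation of the relevant $(0,0,0)$ cosine law, so your route is more elementary (no appeal to \cite{GL}) at the cost of redoing a computation the paper quotes. The one delicate point you flag --- which adjacent lift of $\beta$ the left-turn resolution lands on --- does work out as you state: with the surface orientation pulled back to the standard orientation of $\mathbb{H}^2$, turning left while approaching the cusp moves in the direction opposite to the positively oriented horocyclic segment from $\alpha$ to $\beta$, so the positive resolution pairs with $\theta'$ and the negative with $\theta$, exactly matching the paper's $\theta'=e^{\frac{x-a-b}{2}}$, $\theta=e^{\frac{y-a-b}{2}}$; and since your provisional labeling pairs the two choices consistently, either resolution of the ambiguity yields identities (1) and (2) unchanged. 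Note also that your hypothesis bookkeeping is right where it matters: the assumption that each arc has exactly one end at $v$ is what makes the feet of the lifts of $\beta$ a single $\langle z\mapsto z+r\rangle$--orbit $\{c+nr\}$, which is the step that would need the modification described in Remark~\ref{multiple} otherwise.
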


\begin{figure}[htbp]\centering
\includegraphics[width=4.5cm]{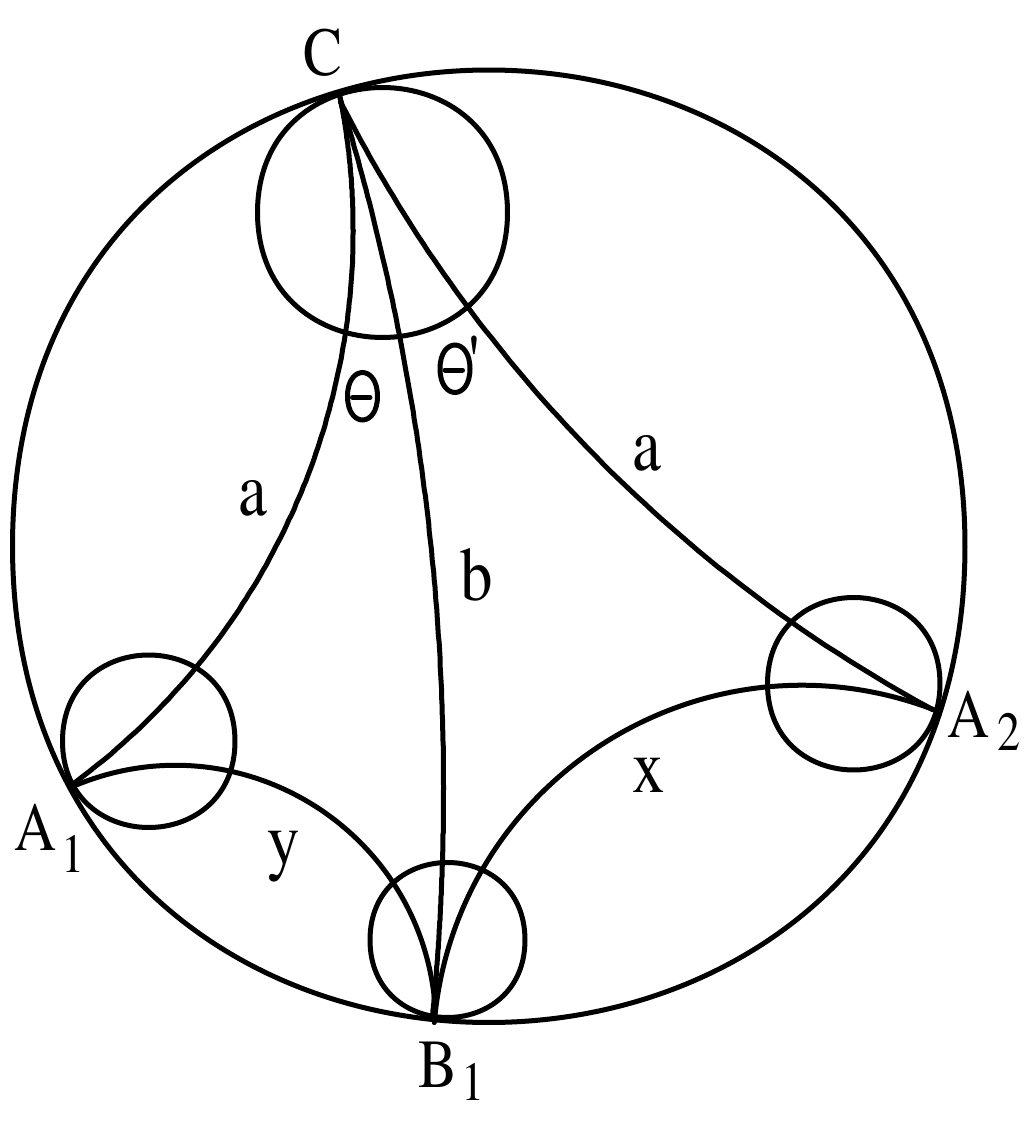}\\
\caption{}\label{puncture-crossing}
\end{figure}

\begin{proof} Let us look at Figure \ref{puncture-crossing}. Let $C$ be a lift of $v$ in the universal cover $\mathbb{H}^2$, and let $\overline{A_1C}$ and $\overline{B_1C}$ be the respective lifts of $\alpha$ and $\beta$ passing through $C$. Then $\overline{A_2B_1}$ and $\overline{A_1B_1}$ are lifts of $\alpha_v\beta^+$ and $\alpha_v\beta^-$, respectively. Applying the cosine law to the ideal triangles $CA_2B_1$ and $CA_1B_1$, we have \[\theta'=e^{\frac{x-a-b}{2}}\ \text{ and }\ \theta=e^{\frac{y-a-b}{2}}\,.\]
Since $r=\theta+\theta'$, the sum of the two equalities above implies part (1), and the difference implies part (2).\end{proof}

\begin{remark}\label{multiple}
Note that if $\alpha$ or $\beta$ have several of there ends meeting at $v$, similar formulas hold replacing $x$ and $y$ by all the possible positive and negative resolutions between $\alpha$ and $\beta$, taking the sum of their $\lambda$--lengths instead and considering the sums of the generalized angles between their ends. See the definition of the Goldman bracket on $\C(\s)$ and that of the Weil-Petersson Poisson bi-vector $\Pi_{WP}$ for comparison.
\end{remark}

For each point of self-intersection $p$ of an arc or a loop $\alpha$, one of its two resolutions at $p$ is connected and the other one is not. We call the former one the \emph{non-separating resolution} of $\alpha$ and the later one the \emph{separating resolution} of $\alpha$. Note that if $\alpha$ is an arc, then the separating resolution of $\alpha$ consists of an arc and a loop, which we call the \emph{arc component} and the \emph{loop component} respectively.

Although the curling number of a geodesic is always 0, when resolving a self-intersection the curling number of one of its resolutions may be 1. As such we have the following lemma.

\begin{lemma}\label{lemma:nonzero} Let $\alpha$ be a closed geodesic or a geodesic arc. Then the curling number $c(\beta)$ of the non-separating resolution $\beta$ of $\alpha$ at each of its points of self-intersection is at most $1$; and the only possibility that $c(\beta)=1$ is as shown in Figure \ref{nonezero}.
\end{lemma}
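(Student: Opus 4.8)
The plan is to analyze the self-intersection point $p$ of the geodesic $\alpha$ together with the local picture of the non-separating resolution $\beta$ near $p$, and to show that any ``curl'' created by the resolution must be removable by a one-way Reidemeister I move except in one exceptional local configuration. First I would set up the combinatorial framework: a geodesic representative minimizes the number of self-intersection points in its (regular) isotopy class, so $c(\alpha)=0$, i.e. $\alpha$ carries no one-way Reidemeister I move. The non-separating resolution $\beta$ at $p$ is obtained by a local surgery in a disk neighborhood $D$ of $p$, replacing the transverse double point by the connected smoothing. Away from $D$, $\beta$ and $\alpha$ agree as curves on $\s$, so any curl of $\beta$ must involve the surgered strands inside $D$; in particular, a curl of $\beta$ is an innermost topologically trivial loop $c$ bounded by an arc of $\beta$ that passes through (a neighborhood of) $p$. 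I would enumerate the possible positions of the two strands of $\alpha$ through $p$ relative to this innermost loop.

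Next I would carry out the case analysis. Orient $\alpha$ and follow it through $p$ twice; label the four local rays at $p$ cyclically. The non-separating smoothing reconnects these rays in the unique way that keeps the curve connected. A curl in $\beta$ arises precisely when, after the smoothing, one of the two newly joined arcs, together with a short return path near $p$, bounds an embedded disk in $\s$ containing no other part of $\beta$. Because $\alpha$ was a geodesic with no curls, such a disk, if it exists, cannot have been present before the surgery; hence the disk must be ``created'' by the reconnection, which forces the relevant sub-arc of $\alpha$ between its two passages through $p$ to be itself a trivial loop based near $p$ — this is exactly the picture in Figure \ref{nonezero}. In that case $\beta$ carries exactly one one-way Reidemeister I move (removing that single curl), so $c(\beta)=1$. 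In every other relative position of the strands, I would argue that either no trivial loop is created, so $c(\beta)=0$, or a trivial loop together with the rest of $\beta$ would exhibit a bigon or monogon that could be pushed back to $\alpha$, contradicting minimality of self-intersections of the geodesic $\alpha$. An auxiliary point is that $\beta$ cannot carry two or more one-way Reidemeister I moves: two nested curls on $\beta$ would again descend, after undoing the surgery, to a curl or an excess self-intersection of $\alpha$, again contradicting $c(\alpha)=0$ and geodesicity. Hence $c(\beta)\le 1$.

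The main obstacle I anticipate is the bookkeeping in the case analysis: one must be careful that the ``curl'' of $\beta$ is genuinely a one-way Reidemeister I curl in the sense of the definition (an innermost topologically trivial loop at a vertex) and not merely a trivial loop in free homotopy that is nonetheless essential as an immersed curve, and one must handle separately the possibility that the surgered strands close up in a way that changes whether a sub-loop of $\alpha$ is null-homotopic. The cleanest way to control this is to work in the universal cover $\mathbb{H}^2$: lift $\alpha$ to a geodesic, note that distinct lifts through a lift of $p$ have no further intersections, and observe that a curl of $\beta$ corresponds to a half-plane bounded by a lift of $\beta$ that projects into an embedded disk; this geometric picture pins down the exceptional configuration and rules out all others. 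The remaining verifications — that in the exceptional case the curl is removable by exactly one move, and that the curling number cannot exceed $1$ — are then short and I would leave the routine diagram-chasing to the reader.
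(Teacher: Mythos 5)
There is a genuine gap at the point where you identify the exceptional configuration. You claim that a curl of $\beta$ can only arise when ``the relevant sub-arc of $\alpha$ between its two passages through $p$'' (that is, one of the two lobes of $\alpha$ at $p$) ``is itself a trivial loop based near $p$''. But a lobe of a geodesic at a self-intersection point is never null-homotopic: lifting to $\mathbb{H}^2$, the lift of $\alpha$ is an embedded geodesic, so its two passages through $p$ differ by a nontrivial deck transformation. Under your criterion the case $c(\beta)=1$ would therefore never occur and your argument would show $c(\beta)=0$ always, contradicting the statement itself (and Figure \ref{nonezero}), and invalidating the cases (2) of Lemmas \ref{3.6} and \ref{3.7}, which genuinely require $c(\beta)=1$ and the twisted cosine laws. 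The null-homotopic loop responsible for the curl is not a lobe of $\alpha$: it exists only after the smoothing, and it consists of the \emph{entire} other lobe, the two smoothing arcs, and the two short end-segments of the first lobe up to a nearby crossing $p'$ of that lobe with itself, which is the vertex of the curl. So the ``disk created by the reconnection'' is bounded by a loop mixing both lobes, not by a single lobe, and your case analysis misses exactly the configuration the lemma is about.

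The paper's proof is the corrected form of your dichotomy: given a curl $c$ of $\beta$ with vertex $p'$, either $c$ is contained in one component $\alpha_i$ of $\alpha-p$, or it is not. In the first case $c$ is also a sub-loop of the geodesic $\alpha$, forcing a curl or a bigon (with vertices $p$ and $p'$) on $\alpha$, which is impossible; this part of your argument is sound. In the second case, following $\beta$ through the resolution shows that $c$ must contain the whole other lobe, that it is the unique curl (a second curl would lie inside a lobe and descend to a curl of $\alpha$), and that the configuration is forced to be that of Figure \ref{nonezero}. To repair your write-up you would need to replace the ``lobe is trivial'' criterion by this analysis of which sub-loop of $\beta$ at a surviving crossing $p'$ can be null-homotopic; note also that $p'$ cannot be an intersection of the two different lobes, since a null-homotopic loop made of two geodesic arcs meeting only at $p$ and $p'$ would lift to a geodesic bigon in $\mathbb{H}^2$.
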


\begin{figure}[htbp]\centering
\includegraphics[width=6cm]{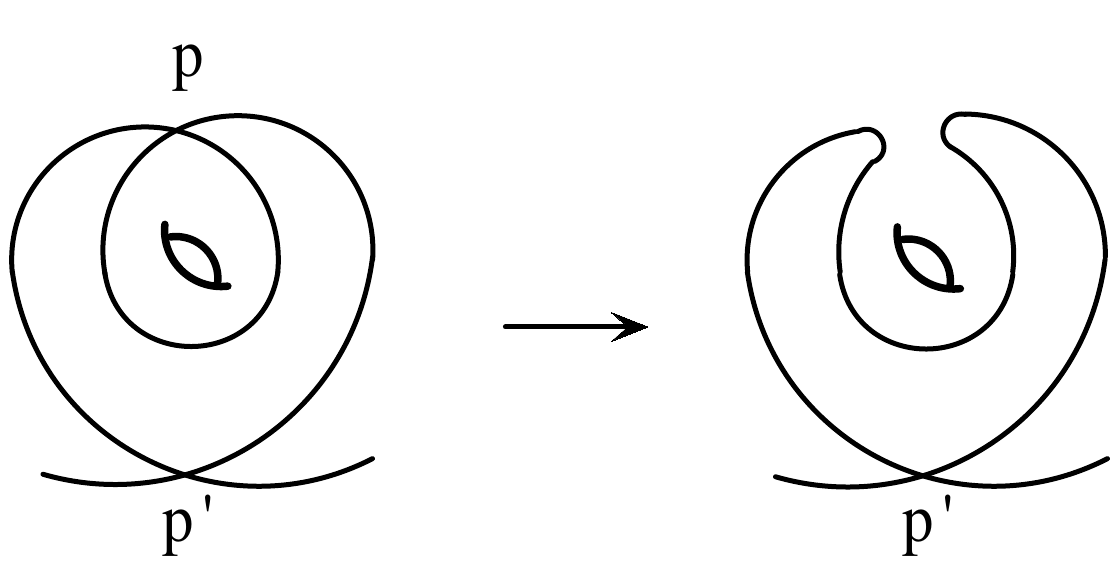}\\
\caption{The case $c(\beta)=1$.}\label{nonezero}
\end{figure}

\begin{proof} If $c(\beta)>0$, let $c$ be one of its curl and let $p'$ be the vertex of $c$. Let also $\alpha_1$ and $\alpha_2$ be the components of $\alpha-p$. We have the following two cases:
\begin{enumerate}[(a)]
\item $p'\in \alpha_i$ and $c\subset \alpha_i$ for $i=1$ or $2$, 

\item $p'\in \alpha_i$ but $c\nsubseteq\alpha_i$ for $i=1$ or $2$.
\end{enumerate}
However, if (a) occurred, then $\alpha$ itself would contain either a curl or a bi-gon with vertices $p$ and $p'$, which is excluded since $\alpha$ is a geodesic. Hence the only possibility is (b), in which case if it is true for say $i=1$, then $\alpha_2\subset c$ and $c$ is the only curl in $\beta$, otherwise $\alpha$ would also contain one. One can see then that the only possible configuration is that of Figure~\ref{nonezero}.\end{proof}

\begin{lemma}\label{3.6} Let $\alpha$ be a closed geodesic of length $a$ and $p$ be one of its self-intersection points. Let $x$ and $y$ respectively be the lengths of the geodesic representatives of the two components of the separating resolution of $\alpha$, and let $z$ be the length of the geodesic representative of the non-separating resolution $\beta$ of $\alpha$. 
\begin{enumerate}[(1)]
\item If $c(\beta)=0$, then $$\cosh\frac{a}{2}=2\cosh\frac{x}{2}\cosh\frac{y}{2}+\cosh\frac{z}{2}.$$

\item If $c(\beta)=1$, then $$\cosh\frac{a}{2}=2\cosh\frac{x}{2}\cosh\frac{y}{2}-\cosh\frac{z}{2}.$$
\end{enumerate}
In addition, the formulae hold when some components of the resolutions of $\alpha$ are circles around a puncture.

\end{lemma}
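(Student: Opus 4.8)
The plan is to reduce the statement to the two-curve identities already established, specifically Lemma~\ref{3.1}, by exhibiting a closed geodesic $\delta$ on $\s$ whose two resolutions at a point recover the configuration at $p$. Concretely, I would lift $\alpha$ to $\mathbb{H}^2$ and let $\widetilde{\alpha}$ be a geodesic axis; the self-intersection point $p$ corresponds to two lifts $\widetilde{\alpha}_1,\widetilde{\alpha}_2$ of $\alpha$ crossing at a point $P$, with deck transformations $g_1,g_2$ translating along them. The non-separating resolution $\beta$ is represented by one of the elements $g_1g_2$ or $g_1g_2^{-1}$, and the two components of the separating resolution correspond to $g_1$ and $g_2$ acting on sub-loops of $\alpha$; writing $a = \ell(g_1g_2^{\pm 1})$-type relations via traces in $PSL_2(\R)$ is the cleanest route. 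Since $\lambda(\gamma)=2\cosh\frac{\ell(\gamma)}{2}=\pm\operatorname{tr}\rho(\gamma)$, the whole statement is a shadow of the $SL_2$ identity $\operatorname{tr}(AB)+\operatorname{tr}(AB^{-1})=\operatorname{tr}(A)\operatorname{tr}(B)$ applied to the appropriate pair of matrices representing the two arcs emanating from $p$.

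The key steps, in order: (i) Identify the two ``halves'' $\alpha_1,\alpha_2$ of $\alpha$ at $p$; passing once around $\alpha$ is the concatenation, so $\rho(\alpha)=\rho(\alpha_1)\rho(\alpha_2)$ up to conjugacy, where $\rho(\alpha_i)$ need not individually be hyperbolic but the separating resolution has components freely homotopic to loops represented by $\rho(\alpha_1)$ and $\rho(\alpha_2)$. (ii) Observe that the non-separating resolution $\beta$ is freely homotopic to the loop represented by $\rho(\alpha_1)\rho(\alpha_2)^{-1}$ (this is exactly the combinatorial content of ``non-separating''). (iii) Apply the trace identity $\operatorname{tr}(\rho(\alpha_1)\rho(\alpha_2))+\operatorname{tr}(\rho(\alpha_1)\rho(\alpha_2)^{-1})=\operatorname{tr}(\rho(\alpha_1))\operatorname{tr}(\rho(\alpha_2))$ and translate traces into $2\cosh(\ell/2)$ — but tracking the signs carefully, because $\operatorname{tr}=\pm 2\cosh(\ell/2)$ and the sign depends on whether the loop lifts to $SL_2$ with positive or negative trace. (iv) Show that the sign discrepancy is precisely governed by the curling number $c(\beta)$: when $c(\beta)=0$ all signs align giving the $+$ version, and when $c(\beta)=1$ (the configuration of Figure~\ref{nonezero}, classified in Lemma~\ref{lemma:nonzero}) the lift of $\beta$ has the opposite trace sign, producing the $-\cosh\frac{z}{2}$. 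The factor of $2$ in front of $\cosh\frac{x}{2}\cosh\frac{y}{2}$ comes from $\operatorname{tr}(\rho(\alpha_1))\operatorname{tr}(\rho(\alpha_2)) = (\pm 2\cosh\frac{x}{2})(\pm 2\cosh\frac{y}{2})$, i.e. the $\lambda$ of a loop carries a factor $2\cosh$ rather than $e^{\ell/2}$.

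Alternatively — and this may be cleaner pictorially, matching the style of Lemmas~\ref{3.1}--\ref{3.4} — I would argue directly in $\mathbb{H}^2$ with a picture analogous to Figure~\ref{curve-curve}(A): take a fundamental segment of $\widetilde{\alpha}$ of length $a$ running between two lifts of $p$, take the midpoint $M$, and use the isometry of two triangles $A_1B_1M$ and $MB_2C_2$ coming from the self-intersection angle, exactly as in the proof of Lemma~\ref{3.1}, to identify the geodesics through $M$ representing the separating components (lengths $x,y$) and the non-separating resolution (length $z$). Then apply the hyperbolic cosine law to the relevant triangles and eliminate the angle. This reduces the whole lemma to the case of Lemma~\ref{3.1} with one of the two ``curves'' being $\alpha$ thought of twice over, so that it is literally a specialization rather than a new computation. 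The final clause — that the formulae persist when a resolution component is a loop around a puncture — follows by continuity/degeneration: such a loop is a limit of closed geodesics (or one invokes the convention $\lambda(\text{peripheral loop})=2$, i.e. $\cosh\frac{\ell}{2}\to 1$), and both sides of the identity are continuous in the geometric data, so it suffices to check the limiting relation, which is a direct substitution.

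The main obstacle I anticipate is Step (iv): pinning down exactly \emph{why} the curling number flips the sign of $\cosh\frac{z}{2}$, and making this rigorous rather than a sign-chasing heuristic. The issue is that $\ell(\beta)$ is intrinsically defined from the \emph{geodesic} representative of the free homotopy class, whereas $z$ as it appears through the trace identity is read off from an element of $\pi_1$ whose lift to $SL_2(\R)$ may have trace $-2\cosh\frac{z}{2}$; one must match the combinatorial curling number of the resolved \emph{diagram} (an invariant of the regular isotopy class, via Lemma~\ref{lemma:nonzero}) with this algebraic sign. I expect this is handled by a careful local analysis at the configuration of Figure~\ref{nonezero} — showing the non-separating resolution there bounds (together with part of $\alpha$) an immersed once-punctured monogon or an annulus whose core is $\beta$, forcing the extra half-twist — and that everything else (the triangle isometries, the cosine law manipulations, the factor of $2$, the puncture-degeneration) is routine once this sign bookkeeping is set up correctly.
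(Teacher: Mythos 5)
The heart of this lemma is precisely the dichotomy between $c(\beta)=0$ and $c(\beta)=1$, and that is the one point your proposal does not actually prove. In your route (i)--(iv) you reduce everything to $\operatorname{tr}(AB)+\operatorname{tr}(AB^{-1})=\operatorname{tr}(A)\operatorname{tr}(B)$, and you must then show that the lift-independent sign data attached to the four traces makes the $2\cosh\frac{x}{2}\cosh\frac{y}{2}$ term appear with $+$ in both cases while only the $\cosh\frac{z}{2}$ term flips, governed exactly by the curling number; you state this as Step (iv) and then say you ``expect'' it follows from a local analysis of Figure~\ref{nonezero}. That expectation is the entire content of the case distinction, so as written the argument has a gap exactly where the lemma is nontrivial. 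The paper does not argue via traces at all: it works in $\mathbb{H}^2$ with the two branches through a lift $P$ of $p$, drops the common perpendiculars to the axes $X$, $Y$, $Z$ of the three resolution geodesics, and applies the cosine laws for generalized triangles of type $(1,-1,-1)$ from Guo--Luo, obtaining $\sin^2\frac{\theta}{2}=\cosh\frac{x}{2}\cosh\frac{y}{2}/(\cosh\frac{a_1}{2}\cosh\frac{a_2}{2})$ and a second cosine law involving $z$; the sign flip in case (2) then comes out mechanically because, in the configuration of Figure~\ref{nonezero} (the only one possible, by Lemma~\ref{lemma:nonzero}), the generalized triangle carrying the $Z$--side is \emph{twisted}, and the twisted cosine law of Appendix~\ref{A} has $+\cosh\frac{z}{2}$ where the untwisted one has $-\cosh\frac{z}{2}$. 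The sign bookkeeping you defer is exactly what that twisted trigonometry was developed to do.

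Your alternative route does not repair this. The lemma is not ``literally a specialization'' of Lemma~\ref{3.1}: at a self-intersection the two sub-loops based at $p$ are broken geodesics, consecutive lifts of $p$ along a branch are at distances $a_1$ and $a_2$ with $a_1+a_2=a$, and the lengths $x,y$ of the geodesic representatives of those sub-loops are not the branch lengths, so the midpoint/isometric-triangle trick of Lemma~\ref{3.1} says nothing about $x$, $y$, $z$ until one introduces the perpendicular feet on the axes $X$, $Y$, $Z$ and the $(1,-1,-1)$ triangles --- note also that the shape of the identity here ($\cosh\frac{a}{2}$ alone on one side, a product of resolution terms on the other) is different from that of Lemma~\ref{3.1}, and nothing in that proof produces the $c(\beta)$ dependence. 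Finally, the degenerate clause cannot be obtained by viewing a circle around a puncture as a limit of closed geodesics: the metric is fixed, the peripheral class has no geodesic representative and is not approximated by essential geodesics on the same surface. In the trace picture it is a parabolic with $|\operatorname{tr}|=2$, and in the paper's picture one sets the corresponding length to $0$ and reruns the computation with the degenerate generalized triangles of type $(0,1,1)$ --- a short verification, but a verification, not a continuity appeal.
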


\begin{figure}[htbp]\centering
\includegraphics[width=9.5cm]{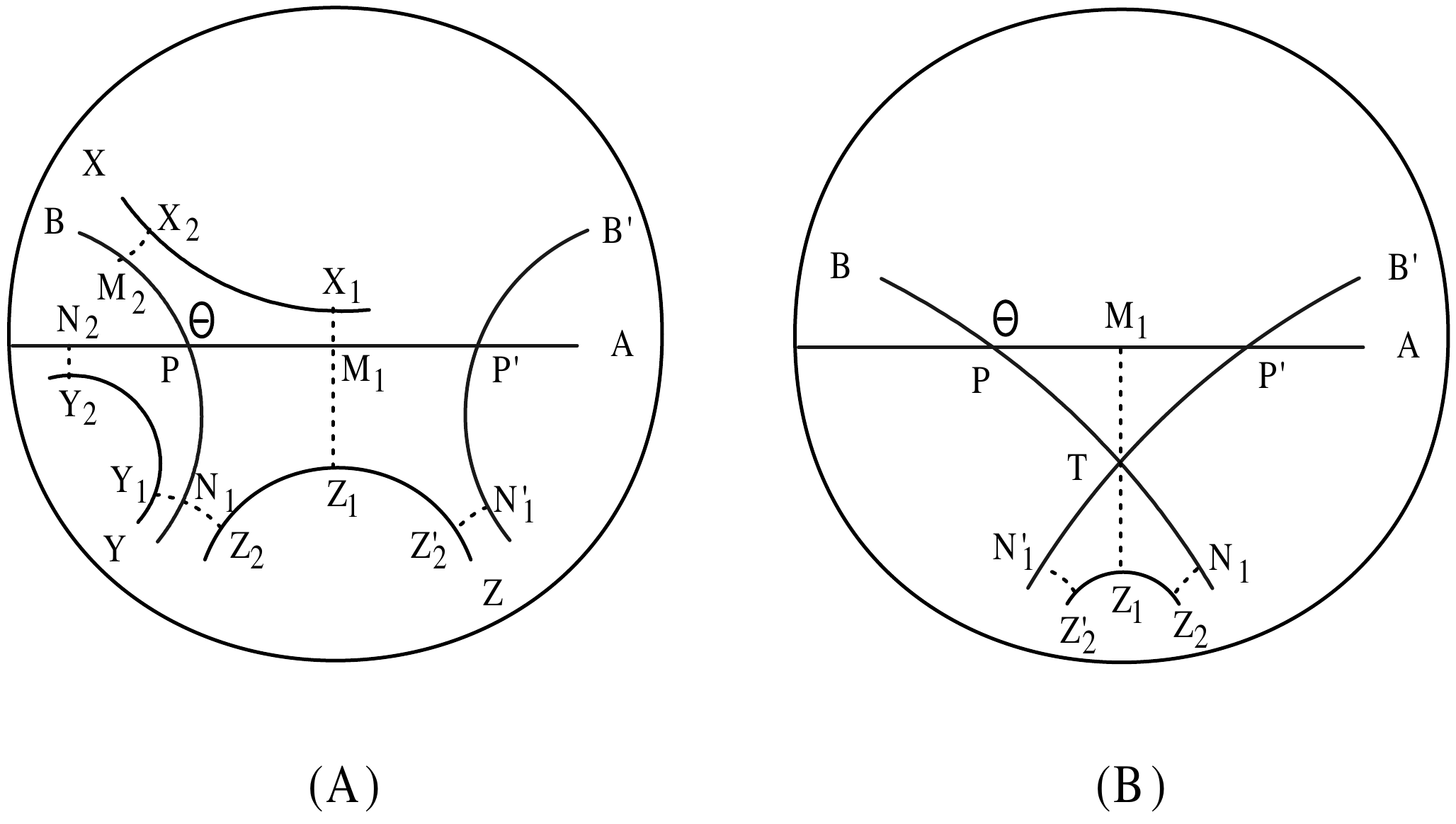}\\
\caption{}\label{self-intersection}
\end{figure}

\begin{proof}  For (1), let us look at (A) of Figure \ref{self-intersection}. Let $P$ be a lift of $p$ in the universal cover $\mathbb{H}^2$, and let $\theta$ the angle between the two lifts  $A$ and $B$ of $\alpha$ passing through $P$. Let $X$ and $Y$ be the corresponding lifts of the components of the separating resolution of $\alpha$, and let $Z$ be the corresponding lift of the non-separating resolution of $\alpha$. Let $M_1\in A$, $N_1\in B$ and $Z_1$, $Z_2\in Z$ be such that $|M_1Z_1|$ realizes the distance $d(A,Z)$ and $|N_1Z_2|$ realizes the distance $d(B,Z)$. Let $P'$ be the lift of $p$ on $A$ next to $P$ and let $B'$ be the other lift of $\alpha$ passing through $P'$. Let $N_1'\in B'$ and $Z_2'\in Z$ be such that $|N_1'Z_2'|$ realizes $d(B',Z)$. Then $\overline{N_1'Z_2'}$ and $\overline{N_1Z_2}$ are the lifts of the same geodesic segment, which implies that $|N_1'Z_2'|=|N_1Z_2|$. Since $\angle N_1'P'M_1=\angle N_1PM_1$, the generalized triangles $M_1Z_1Z_2'N_1'P'$ and $M_1Z_1Z_2N_1P$ of type $(1,-1,-1)$ are isometric. Therefore, the lengths $|Z_2'Z_1|=|Z_1Z_2|=\frac{z}{2}$ and $|P'M_1|=|PM_1|\doteq a_1$. Let $M_1''\in A$ and $X_1\in X$ be such that $|M_1''X_1|$ realizes $d(A,X)$, and let $N_1''\in B$ and $Y_1\in Y$ be such that $|N_1''Y_1|$ realizes $d(B,Y)$. By a similar argument as above, we see that $|PM_1''|=\frac{1}{2}|P'P|=a_1$ and $PN_1''=PN_1\doteq a_2$. Let $M_2\in B$, $N_2\in A$, $X_2\in X$ and $Y_2\in Y$ be such that $|M_2X_2|$ realizes $d(B,X)$ and $|N_2Y_2|$ realizes $d(A,Y)$. Then as above, we have $|PM_2|=a_1$ and $|PN_2|=a_2$. Since $|PM_1|=|PM_2|=\frac{1}{2}|P'P|$, the points $M_1$ and $M_2$ cover the same point on the surface $\s$, hence $X_1$ and $X_2$ cover the same point on $\s$ and $|X_1X_2|=x$. The same argument implies $|Y_1Y_2|=y$. Applying the cosine law to the generalized triangles $PM_1X_1X_2M_2$ and $PN_1Y_1Y_2N_2$ of type $(1,-1,-1)$, we have \[\cos\theta=\frac{-\cosh x+\sinh^2\frac{a_1}{2}}{\cosh^2\frac{a_1}{2}}=\frac{-\cosh y+\sinh^2\frac{a_2}{2}}{\cosh^2\frac{a_2}{2}}\,,\]
which implies \[\sin^2\frac{\theta}{2}=\frac{\cosh\frac{x}{2}\cosh\frac{y}{2}}{\cosh\frac{a_1}{2}\cosh\frac{a_2}{2}}\,.\]
Applying the cosine law to the generalized triangle $PM_1Z_1Z_2N_1$ of the same type, we have \[\cos(\pi-\theta)=\frac{-\cosh\frac{z}{2}+\sinh\frac{a_1}{2}\sinh\frac{a_2}{2}}{\cosh\frac{a_1}{2}\cosh\frac{a_2}{2}}\,.\]
From the last two equations and the identity $\cos(\pi-\theta)=2\sin^2\frac{\theta}{2}-1$ we obtain the result. Note that when some components of the resolutions of $\alpha$ are curves around a puncture, then the corresponding lengths $x$, $y$ or $z$ are taken to be $0$, and the corresponding generalized triangles become union of generalized ideal triangles of type $(0,1,1)$. Applying the cosine law for such triangles we obtain formula (1) in these degenerated cases. 

For (2), let us look at (B) of Figure \ref{self-intersection}. Applying similarly the cosine law to the generalized triangles $PM_1X_1X_2M_2$ and $PN_1Y_1Y_2N_2$ of type $(1,-1,-1)$, we obtain
\[\cos\theta=\frac{-\cosh x+\sinh^2\frac{a_1}{2}}{\cosh^2\frac{a_1}{2}}=\frac{-\cosh y+\sinh^2\frac{a_2}{2}}{\cosh^2\frac{a_2}{2}}\,,\]
which implies \[\sin^2\frac{\theta}{2}=\frac{\cosh\frac{x}{2}\cosh\frac{y}{2}}{\cosh\frac{a_1}{2}\cosh\frac{a_2}{2}}\,.\]
Since $c(\beta)=1$, there is an intersection $T$ between $B$ and $B'$ and the generalized triangle $PM_1Z_1Z_2N_1$ of type $(1,-1,-1)$ is twisted. Applying the cosine law for such generalized triangle (see Appendix~\ref{A}), we have \[\cos(\pi-\theta)=\frac{\cosh\frac{z}{2}+\sinh\frac{a_1}{2}\sinh\frac{a_2}{2}}{\cosh\frac{a_1}{2}\cosh\frac{a_2}{2}}\,,\]
and the identity $\cos(\pi-\theta)=2\sin^2\frac{\theta}{2}-1$ implies the result. \end{proof}

\begin{lemma}\label{3.7} Let $\alpha$ be a geodesic arc of length $a$ and $p$ be one of its points of self-intersection. For the separating resolution of $\alpha$, let $x$ and $y$ be the lengths of the geodesic representatives of the loop and of the arc component respectively. We also let  $z$ be the length of the geodesic representative of the non-separating resolution $\beta$ of $\alpha$.
\begin{enumerate}[(1)]
\item If $c(\beta)=0$, then 
$$e^{\frac{a}{2}}=2\cosh\frac{x}{2}e^{\frac{y}{2}}+e^{\frac{z}{2}}.$$

\item If $c(\beta)=1$, then 
$$e^{\frac{a}{2}}=2\cosh\frac{x}{2}e^{\frac{y}{2}}-e^{\frac{z}{2}}.$$
\end{enumerate}
In addition, the formulae hold when the loop component in the separating resolution of $\alpha$ is a circle around a puncture.
\end{lemma}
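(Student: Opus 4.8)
The plan is to adapt the proof of Lemma~\ref{3.6} to the case of an arc, the new feature being that ideal vertices now enter the generalized triangles. Keeping the decorated metric $(m,r)$ fixed, I would pass to the universal cover $\mathbb{H}^2$ of $(\s,m)$, lift the self-intersection point $p$ to a point $P$, and let $A$ and $B$ be the two complete geodesics covering $\alpha$ through $P$, meeting there at an angle $\theta$. The loop component of the separating resolution, regarded as a based loop at $p$, lifts to the axis $X$ of the corresponding deck transformation, while the arc component and the non-separating resolution $\beta$ lift to complete geodesics $Y$ and $Z$. Unlike in Lemma~\ref{3.6}, the geodesics $Y$ and $Z$ inherit an end of $\alpha$ and are therefore asymptotic to $A$ (and to $B$) at lifts of the punctures at the ends of $\alpha$, on which the decoration fixes horocycles; $X$, on the other hand, is asymptotic to none of $A,B,Y,Z$.

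Next I would record three relations obtained from the cosine laws of \cite{GL}:
\begin{enumerate}[(i)]
\item the common perpendiculars from $A$ and from $B$ onto $X$ bound, together with the subsegments of $A$ and $B$ issuing from $P$, a generalized triangle of type $(1,-1,-1)$ with angle $\theta$ at $P$, exactly as in Lemma~\ref{3.6}; its cosine law relates $\theta$, $\cosh\frac x2$, and the distances from $P$ to $X$ along $A$ and $B$;
\item $A$, $B$ and $Y$ bound a generalized triangle with an ideal vertex at a lift of an end of $\alpha$, decorated by its horocycle; its cosine law relates $\theta$, $e^{y/2}$, and distances along $A$ and $B$;
\item likewise $A$, $B$ and $Z$ bound such a triangle, whose cosine law relates $\pi-\theta$, $e^{z/2}$, and distances along $A$ and $B$, provided $c(\beta)=0$.
\end{enumerate}
Eliminating $\theta$ and the auxiliary distances between (i), (ii) and (iii) --- after the same manipulations as in Lemma~\ref{3.6}, where one uses $1+\cosh t=2\cosh^2\frac t2$ and $\cosh u\cosh w+\sinh u\sinh w=\cosh(u+w)$ --- would give formula~(1).

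For formula~(2): by Lemma~\ref{lemma:nonzero} one has $c(\beta)\le 1$, and the only configuration with $c(\beta)=1$ is the one of Figure~\ref{nonezero}, in which $\beta$ carries an extra self-intersection. Then the generalized triangle of step~(iii) becomes twisted, and I would use the twisted cosine law of Appendix~\ref{A} in its place; as in Lemma~\ref{3.6}(2) this flips the sign of the $e^{z/2}$ term, producing formula~(2). Finally, if the loop component is a circle around a puncture, then $x=0$, the geodesic $X$ degenerates to an ideal point, and the triangle of step~(i) degenerates into generalized ideal triangles, whose cosine law still yields the formula with $2\cosh\frac x2=2$. I expect the crux to be the geometric bookkeeping of steps~(ii) and (iii): determining the exact type of each generalized triangle --- which of its vertices are ideal, regular, or hyper-ideal, and which rays of $A$ and $B$ bound it --- and matching the configuration of Figure~\ref{nonezero} to the correct twisted cosine law from Appendix~\ref{A}.
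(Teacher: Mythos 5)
Your plan follows the paper's own strategy: the same universal-cover setup, the same three kinds of generalized triangles (one of type $(1,-1,-1)$ against the axis of the loop component, one of type $(0,0,1)$ for the arc component, and triangles with one decorated ideal vertex and one hyper-ideal vertex for the non-separating resolution), the twisted cosine law of Appendix~\ref{A} when $c(\beta)=1$, and the degenerate reading when the loop component encircles a puncture. So in outline you are reproducing the paper's argument.

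There is, however, one concrete step that your sketch mis-attributes and that would not go through as stated: the elimination is \emph{not} ``the same manipulations as in Lemma~\ref{3.6}''. In Lemma~\ref{3.6} every side enters through its full length $x$, $y$, $z$, whereas here the sides ending at the decorated ideal vertices are only seen through truncated pieces: $a_1=d(P,H_Y)$, $a_2=d(P,H_{Y_1})$, and, for the non-separating resolution, lengths $z_1$ and $z_2$ measured on two \emph{different} lifts $D$ and $D_1$ of $\beta$ (a single lift $Z$ is asymptotic to $A$ but not to $B$). Consequently the cosine law in your step (iii) involves $e^{z_1}$ and $e^{a_1}$ separately, not $e^{z/2}$ and $e^{(a_1+a_2)/2}$, and the system cannot be closed without two further inputs that the paper supplies and that mirror the proof of Lemma~\ref{3.2} rather than Lemma~\ref{3.6}: (a) the feet of the perpendiculars from $A$ and $B$ to $X$ are equidistant from $P$ (the common value being $\frac{a_3}{2}$, so that $a=a_1+a_2+a_3$) and coincide with the truncation points of the two $z$-triangles; (b) the sine law applied to those two triangles, using that the two perpendicular segments cover the same segment of $\s$, gives $\frac{a_2-a_1}{2}=\frac{z_2-z_1}{2}$, which is exactly what symmetrizes the cosine law into a relation among $e^{\frac{z}{2}}$, $e^{\frac{a_1+a_2}{2}}$, $a_3$ and $\theta$. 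Only then does the $\theta$-elimination close, via $\cos\theta+2\sin^2\frac{\theta}{2}=1$ and $\sinh\frac{a_3}{2}+\cosh\frac{a_3}{2}=e^{\frac{a_3}{2}}$, rather than the identities you quote. None of this changes your overall plan, but it is the missing ingredient without which steps (i)--(iii) do not combine.
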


\begin{figure}[htbp]\centering
\includegraphics[width=9.5cm]{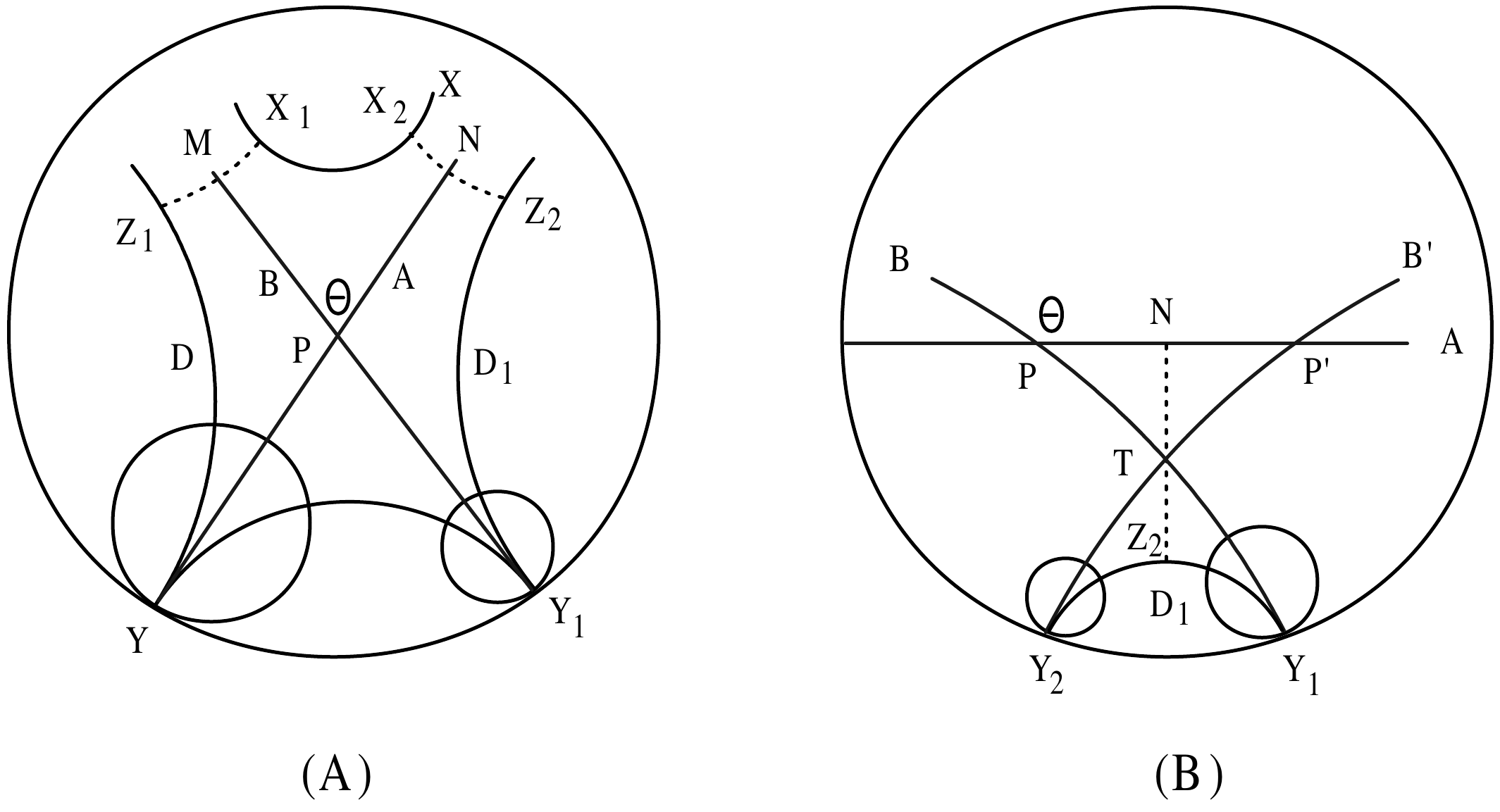}\\
\caption{}\label{arc}
\end{figure}

\begin{proof} For (1),  let $P$ in (A) of Figure \ref{arc} be a lift of $p$ in the universal cover $\mathbb{H}^2$, and let $A$ and $B$ be the two lifts of $\alpha$ passing through $P$ with $\theta$ the angle between them at $p$. Let the end point $Y$ of $A$ and the end point $Y_1$ of $B$ respectively be the lifts of the two end points of $\alpha$ so that $\overline{YY_1}$ is a lift of the geodesic representative of the arc component of the separating resolution of $\alpha$. Let $X$ be the corresponding lift of the geodesic representative of the loop component of the separating resolution of $\alpha$, and let $D$ and $D_1$ be the lifts of the geodesic representative of the non-separating resolution of $\alpha$.  We take points $X_1$ and $X_2\in X$,  $M\in B$ and $N\in A$ such that $|MX_1|$ realizes $d(B,X)$ and $|NX_2|$ realizes $d(A,X)$. Since $\overline{MX_1}$ and $\overline{NX_2}$ cover the same curve on $\s$, we have $|MX_1|=|NX_2|$. Applying the sine law to the generalized triangle $PMX_1X_2N$ of type $(-1,-1,1)$, we have $|PM|=|PN|\doteq\frac{a_3}{2}$. Suppose $M'\in A'$, $N'\in A$, $Z_1\in D$ and $Z_2\in D_1$ are the points such that $|M'Z_1|$ realizes $d(B,D)$ and $|N'Z_2|$ realizes $d(A,D_1)$. Then $\overline{M'Z_1}$ and $\overline{N'Z_2}$ cover the same curve on $\s$, hence $|M'Z_1|=|N'Z_2|$. Applying the cosine law to the generalized ideal triangles $PYZ_1M'$ and $PY_1Z_2N'$ of type $(-1,0,1)$, we see that
\begin{equation*}
\begin{split}
\sinh|PM'|&=\frac{1+\cos(\pi-\theta)\cosh|M'Z_1|}{\sin(\pi-\theta)\sinh|M'Z_1|}\\
&=\frac{1+\cos(\pi-\theta)\cosh|N'Z_2|}{\sin(\pi-\theta)\sinh|N'Z_2|}=\sinh|PN'|\,.
\end{split}
\end{equation*}
Therefore we have $|PM'|=|PN'|$. Hence $M'=M$, $N'=N$ and $|PM'|=|PN'|=\frac{a_3}{2}$. Let $H_Y$ and $H_{Y_1}$ respectively be the horocycles centered at $Y$ and $Y_1$, and $a_1=d(P,H_Y)$, $a_2=d(P,H_{Y_1})$, $z_1=d(Z_1,H_Y)$ and $z_2=d(Z_2,H_{Y_1})$. Then $a=a_1+a_2+a_3$ and $z=z_1+z_2$. Applying the cosine law to the generalized ideal triangle $PYZ_1M$, we have \[\cos(\pi-\theta)=\frac{-e^{z_1}+e^{a_1}\sinh\frac{a_3}{2}}{e^{a_1}\cosh\frac{a_3}{2}}\,.\]
From the sine law applied to the generalized ideal triangles $PYZ_1M$ and $PY_1Z_2N$, we have \[\frac{e^{z_1}}{e^{a_1}}=\frac{\sin(\pi-\theta)}{\sinh|MZ_1|}=\frac{\sin(\pi-\theta)}{\sinh|NZ_2|}=\frac{e^{z_2}}{e^{a_2}}\,,\ \text{ hence }\ \frac{a_2-a_1}{2}=\frac{z_2-z_1}{2}\,.\]
Using this, the cosine law above becomes \[\cos(\pi-\theta)=\frac{-e^{\frac{z}{2}}+e^{\frac{a_1+a_2}{2}}\sinh\frac{a_3}{2}}{e^{\frac{a_1+a_2}{2}}\cosh\frac{a_3}{2}}\,,\]
hence \[e^{\frac{z}{2}}=e^{\frac{a_1+a_2}{2}}(\sinh\frac{a_3}{2}+\cosh\frac{a_3}{2}\cos\theta)\,.\]
Applying the cosine law to the generalized triangle $PMX_1X_2N$, we have \[\cos\theta=\frac{-\cosh x+\sinh^2\frac{a_3}{2}}{\cosh^2\frac{a_3}{2}}\,,\ \text{hence }\ 2\cosh\frac{x}{2}=2\cosh\frac{a_3}{2}\sin\frac{\theta}{2}\,,\]
and the cosine law applied to the generalized ideal triangle $PYY_1$ of type $(0,0,1)$ gives \[e^{\frac{y}{2}}=e^{\frac{a_1+a_2}{2}}\sin\frac{\theta}{2}\,.\]
Therefore, we have
\begin{equation*}
\begin{split}
2\cosh\frac{x}{2}e^{\frac{y}{2}}+e^{\frac{z}{2}}&=e^{\frac{a_1+a_2}{2}}(\sinh\frac{a_3}{2}+\cosh\frac{a_3}{2}\cos\theta+2\cosh\frac{a_3}{2}\sin^2\frac{\theta}{2})\\
&=e^{\frac{a_1+a_2}{2}}e^{\frac{a_3}{2}}=e^{\frac{a}{2}}\,.
\end{split}
\end{equation*} 

For (2), let us look at (B) of Figure \ref{arc}. Applying similarly the cosine law to the generalized triangle $PMX_1X_2N$, we have 
\[\cos\theta=\frac{-\cosh x+\sinh^2\frac{a_3}{2}}{\cosh^2\frac{a_3}{2}}\,,\]
which implies
\[2\cosh\frac{x}{2}=2\cosh\frac{a_3}{2}\sin\frac{\theta}{2}\,,\]
and the cosine law applied to the generalized ideal triangle $PYY'$ of type $(0,0,1)$ gives \[e^{\frac{y}{2}}=e^{\frac{a_1+a_2}{2}}\sin\frac{\theta}{2}.\]
When $c(\beta)=1$, there is an intersection $T$ between $A'$ and $A''$ and the generalized triangles $PNZ_2Y'$ and $P'NZ_2Y''$ of type $(0,1,-1)$ are twisted. Applying the cosine law to $PNZ_2Y'$, we have \[\cos(\pi-\theta)=\frac{e^{z_1}+e^{a_1}\sinh\frac{a_3}{2}}{e^{a_1}\cosh\frac{a_3}{2}}\,.\]
From the sine law for the generalized ideal triangles $PNZ_2Y'$ and $PNZ_2Y''$, we have \[\frac{e^{z_1}}{e^{a_1}}=\frac{\sin(\pi-\theta)}{\sinh|NZ_2|}=\frac{e^{z_2}}{e^{a_2}}\,,\ \text{which implies }\ \frac{a_2-a_1}{2}=\frac{z_2-z_1}{2}\,.\]
Using this, the cosine law above becomes \[\cos(\pi-\theta)=\frac{e^{\frac{z}{2}}+e^{\frac{a_1+a_2}{2}}\sinh\frac{a_3}{2}}{e^{\frac{a_1+a_2}{2}}\cosh\frac{a_3}{2}}\,,\]
hence \[-e^{\frac{z}{2}}=e^{\frac{a_1+a_2}{2}}(\sinh\frac{a_3}{2}+\cosh\frac{a_3}{2}\cos\theta)\,.\] Therefore, we obtain
\[2\cosh\frac{x}{2}e^{\frac{y}{2}}-e^{\frac{z}{2}}=e^{\frac{a}{2}}\,.\] 
\end{proof}

\begin{lemma}\label{3.8} Let $\alpha$ be a geodesic arc of length $a$ both of whose ends meet at a puncture $v$, and let $r$ be the length of the horocycle centered at $v$. Let also $x$ and $y$ be the lengths of the geodesic representatives of the two resolutions $\alpha_1$ and $\alpha_2$ of $\alpha$ at $v$. Then we have
\[e^{\frac{a}{2}}=\frac{2}{r}(\cosh\frac{x}{2}+\cosh\frac{y}{2})\,.\]
In addition, the formula holds when some of the components of the resolutions of $\alpha$ are circles around a puncture.
\end{lemma}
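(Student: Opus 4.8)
The plan is to lift $\alpha$ to the universal cover $\mathbb{H}^2$ and compute directly, in the spirit of the proof of Lemma~\ref{3.4}. Fix a lift $\widetilde\alpha$ of $\alpha$. Since both ends of $\alpha$ are at $v$, the two ideal endpoints of $\widetilde\alpha$ are lifts of $v$, so after an isometry of $\mathbb{H}^2$ I may take $\widetilde\alpha$ to be the positive imaginary axis, with the lift $\infty$ of $v$ carrying the horocycle $\{\operatorname{Im}z=1\}$; then, since $a$ is the signed distance along $\widetilde\alpha$ between the decorating horocycles, the lift $0$ of $v$ carries the Euclidean circle of diameter $e^{-a}$ tangent to $\mathbb{R}$ at $0$.

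The main point is that the two ends of $\alpha$ at $v$ cut the horocycle at $v$ into two arcs, the arc associated to the resolution $\alpha_i$ being the horocyclic segment that $\alpha_i$ ``turns through''; their lengths add up to $r$, so it suffices to show this length equals $2e^{-a/2}\cosh\tfrac{x}{2}$ (respectively $2e^{-a/2}\cosh\tfrac{y}{2}$) and sum. To compute the $\alpha_1$--arc I identify the deck transformation $g$ that is the holonomy of the loop $\alpha_1$: since $\alpha_1$ is formed by joining the end of $\alpha$ at the lift $0$ to the adjacent lift $g\widetilde\alpha$ of $\alpha$ at the cusp $\infty$, one has $g(0)=\infty$, and since the decoration is $\pi_1$--equivariant, $g$ sends the horocycle of diameter $e^{-a}$ at $0$ to $\{\operatorname{Im}z=1\}$. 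These two conditions pin down $g=\begin{pmatrix} t & -e^{-a/2}\\ e^{a/2} & 0\end{pmatrix}$ up to sign, where $|t|=|\operatorname{tr}g|=2\cosh\tfrac{x}{2}$ because $x$ is the translation length of $g$; hence $g\widetilde\alpha$ is the vertical geodesic over $g(\infty)=te^{-a/2}$, and the arc of $\{\operatorname{Im}z=1\}$ between $\operatorname{Re}z=0$ and $\operatorname{Re}z=te^{-a/2}$ has length $|t|e^{-a/2}=2e^{-a/2}\cosh\tfrac{x}{2}$. The same computation with the other resolution gives $2e^{-a/2}\cosh\tfrac{y}{2}$, and adding, $r=2e^{-a/2}\bigl(\cosh\tfrac{x}{2}+\cosh\tfrac{y}{2}\bigr)$, i.e. $e^{a/2}=\tfrac{2}{r}\bigl(\cosh\tfrac{x}{2}+\cosh\tfrac{y}{2}\bigr)$. (Alternatively one can read this off the cosine law for an ideal triangle with horocyclic vertices, as in Lemma~\ref{3.4}, applied to the triangle with vertices $\infty$, $0$, $g(\infty)$, once one checks that its third side has $\lambda$--length $2e^{a/2}\cosh\tfrac{x}{2}$.) When instead a resolution is a circle around a puncture, the corresponding $g$ is parabolic, so $|\operatorname{tr}g|=2$ and one sets $x=0$; the ideal triangle degenerates but the formula persists, exactly as in the degenerate cases ending the proofs of Lemmas~\ref{3.6} and~\ref{3.7}.

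The step I expect to be the real work is the bookkeeping of the cusp configuration: checking that the two resolutions $\alpha_1,\alpha_2$ really correspond to the two complementary horocyclic arcs cut off at $v$ by the two ends of $\alpha$, and that $g$ is precisely the $\pi_1$--translate making $g\widetilde\alpha$ adjacent to $\widetilde\alpha$ at the cusp $\infty$ — which is where the hypothesis that the two ends of $\alpha$ form a consecutive crossing at $v$ enters. Everything else is a short computation in the upper half-plane, and the appearance of $\cosh\tfrac{x}{2}$ rather than a bare $\lambda$--length (as in Lemma~\ref{3.4}) is exactly the reflection of the fact that here the resolutions are loops rather than arcs.
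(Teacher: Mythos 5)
Your argument is correct, and it establishes exactly the identity the paper needs, but the computational core is genuinely different from the paper's. Both proofs share the same skeleton: the two ends of $\alpha$ cut the horocycle at $v$ into two arcs of lengths $\theta_1,\theta_2$ with $\theta_1+\theta_2=r$, and everything reduces to showing $\theta_1=2e^{-a/2}\cosh\frac{x}{2}$ and $\theta_2=2e^{-a/2}\cosh\frac{y}{2}$. The paper computes $\theta_1$ synthetically: it takes the lift $\overline{BB'}$ of the geodesic representative of $\alpha_1$ (the axis of your $g$), drops the common perpendiculars from the two adjacent lifts of $\alpha$, uses the sine law of the generalized triangle of type $(0,-1,-1)$ from Guo--Luo to see that both feet sit at distance $\frac{a}{2}$ from the horocycle, and then reads $\theta_1^2=\frac{\cosh x+1}{e^a/2}$ from the corresponding cosine law; circles around punctures are handled by degenerating these triangles. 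You instead normalize in the upper half-plane and pin down the holonomy $g$ of $\alpha_1$ from $g(0)=\infty$ together with equivariance of the decoration, so that $\theta_1=|\operatorname{tr}g|\,e^{-a/2}=2e^{-a/2}\cosh\frac{x}{2}$, with the parabolic case ($|\operatorname{tr}g|=2$, $x=0$) absorbing the puncture-circle degeneration with no extra discussion. What each buys: your matrix computation is self-contained (no appeal to the Appendix A cosine laws) and uniform across the degenerate case, while the paper's argument keeps the proof in the same toolkit of generalized-triangle trigonometry used throughout Section 3.2. The bookkeeping point you flag is indeed the only delicate step, and your justification is the right one: the complementary horocyclic arc at $v$ meets $\alpha$ only at its endpoints, so its lift starting on $\widetilde\alpha$ ends on the adjacent lift, which the construction of $g$ identifies with $g\widetilde\alpha$; this is precisely where the relation $r=\theta_1+\theta_2$ and the matching of arcs with resolutions come from.
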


\begin{figure}[htbp]\centering
\includegraphics[width=4.5cm]{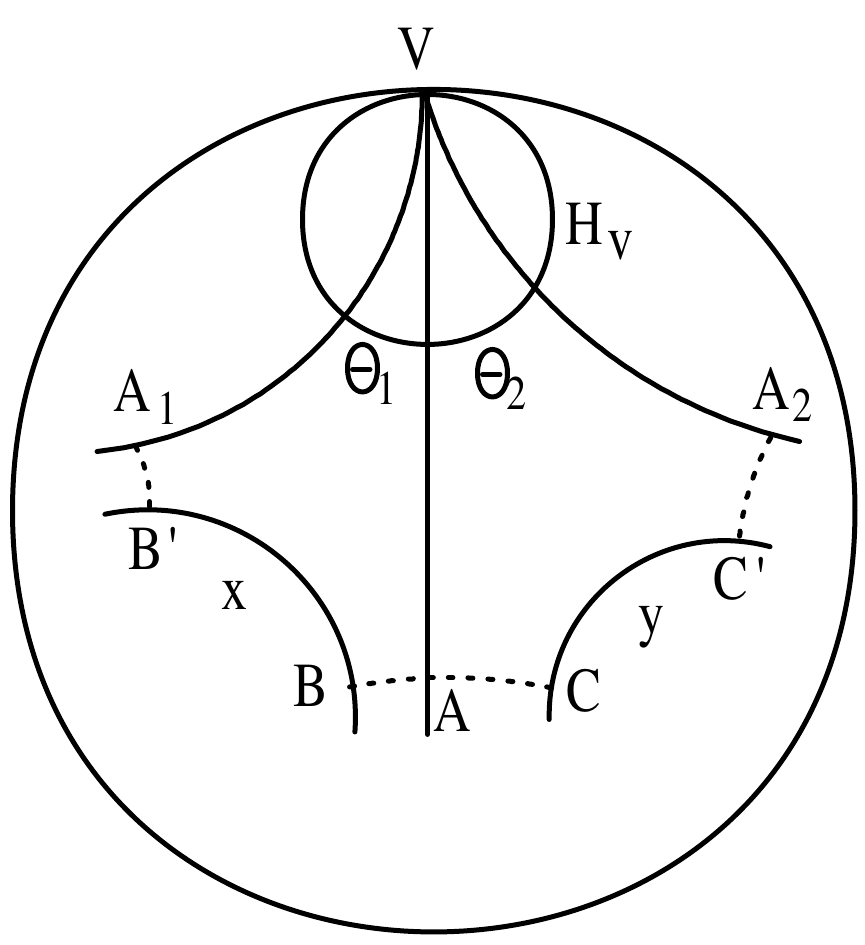}\\
\caption{}\label{self-arc}
\end{figure}

\begin{proof} In Figure \ref{self-arc}, let $V$ be the lift of $v$ and let $H_V$ be the lift of the horocycle centered at $V$, let  $\overline{AV}$ and $\overline{A_1V}$ be the lifts of $\alpha$ passing through $V$ in the universal cover $\mathbb{H}^2$. Let $\theta_1$ be the generalized angle between $\overline{AV}$ and $\overline{A_1V}$ and let $\overline{BB'}$ be the corresponding lift  of the geodesic representative of the homotopy class of $\alpha_1$. We take the point $A$, $A_1$, $B$ and $B'$ such that $|AB|$ realizes the distance from $\overline{AV}$ to $\overline{BB'}$ and $|A_1B'|$ realizes the distance from $\overline{A_1V}$ to $\overline{BB'}$. Since $\overline{AB}$ and $\overline{A_1B'}$ cover the same line in $\s$, we have $|AB|=|A_1B'|$ and $|BB'|=x$. By the sine law for the generalized triangle $CABB'A'$ of type $(0,-1,-1)$, we have \[\frac{e^{d(A,H_V)}}{\sinh|A_1B'|}=\frac{e^{d(A_1,H_V)}}{\sinh|AB|}=1,\]
which implies that $d(A,H_V)=d(A_1,H_V)=\frac{a}{2}$. Applying the cosine law to the generalized triangle $CABB'A'$, we have
\[\theta_1^2=\frac{\cosh x+1}{\frac{e^{a}}{2}},\]
which implies that
\[\theta_1=\frac{2\cosh\frac{x}{2}}{e^{\frac{a}{2}}}.\]
Similarly, if we let $\overline{A_2V}$ be the other lift of $\alpha$ adjacent to $\overline{AV}$ and let $\theta_2$ be the generalized angle between $\overline{AV}$ and $\overline{A_2V}$, we obtain 
\[\theta_2=\frac{2\cosh\frac{y}{2}}{e^{\frac{a}{2}}},\]
which together with the previous identity implies the formula.\end{proof}


\subsection{Generalized trace identities and the algebra homomorphism}\label{algebra}

Combining the results from the previous section, we obtain the following generalized trace identities. 

\begin{proposition}\label{prop:induction} 
\begin{enumerate}[(a)]

\item For a generalized curve $\alpha$ with $p$ one of its self-intersection points in $\s$, let $\alpha_1$ and $\alpha_2$ be the components of the separating resolution of $\alpha$ at $p$ and let $\beta$ be the non-separating resolution of $\alpha$ at $p$. Then we have $$(-1)^{c(\alpha)}\lambda(\alpha)=(-1)^{c(\alpha_1)+c(\alpha_2)}\lambda(\alpha_1)\lambda(\alpha_2)+(-1)^{c(\beta)}\lambda(\beta).$$
\item Let $\alpha$ and $\beta$ be two generalized curves with $p\in \s$ one of their intersections. If $\gamma_1$ and $\gamma_2$ are the resolutions of $\alpha$ and $\beta$ at $p$, then we have 
$$(-1)^{c(\alpha)+c(\beta)}\lambda(\alpha)\lambda(\beta)=(-1)^{c(\gamma_1)}\lambda(\gamma_1)+(-1)^{c(\gamma_2)}\lambda(\gamma_2).$$
\end{enumerate}
\end{proposition}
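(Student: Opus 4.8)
The plan is to reduce Proposition~\ref{prop:induction} to the lengths identities of Section~\ref{lengths} by a careful bookkeeping of how the curling number $c(\cdot)$ behaves under resolutions, and then to match signs. For part~(b), observe first that the statement is multiplicative: writing $\alpha=\alpha'\cup\alpha_0$ and $\beta=\beta'\cup\beta_0$ where $\alpha_0,\beta_0$ are the components through $p$, the factors $(-1)^{c(\alpha')}\lambda(\alpha')$ and $(-1)^{c(\beta')}\lambda(\beta')$ appear on both sides and may be cancelled, so it suffices to treat the case where $\alpha,\beta$ are each a single arc or loop. Then there are exactly four cases according to whether each of $\alpha,\beta$ is an arc or a loop, and these are governed respectively by Lemma~\ref{3.1}, Lemma~\ref{3.2} (in both orders), and Lemma~\ref{3.3}. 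In each case I would take the \emph{sum} of the two parts of the relevant lemma, which produces $\cosh\frac{x}{2}$ or $e^{x/2}$ (i.e. $\lambda$ of the positive resolution) on one side; the point is that, because $\alpha$ and $\beta$ are geodesics hence have $c=0$, one of the two resolutions $\gamma_1,\gamma_2$ is a geodesic with $c=0$ while the other may acquire a curl. Here Lemma~\ref{lemma:nonzero} does the essential work: it says $c$ of the non-separating-type resolution is $0$ or $1$, and pins down exactly when it is $1$, which is precisely the configuration in which the corresponding generalized triangle is ``twisted'' and the cosine law flips a sign (cf. the $\pm\cosh\frac{z}{2}$ dichotomy already isolated in Lemmas~\ref{3.6}--\ref{3.7}). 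The sign $(-1)^{c(\gamma_i)}$ therefore absorbs exactly that flip, and the identity follows.

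For part~(a), the strategy is the same but now the single self-intersection point $p$ of $\alpha$ is split by Lemmas~\ref{3.6}, \ref{3.7} and \ref{3.8} depending on whether $\alpha$ is a loop, an arc with $p$ in the interior, or an arc with $p$ at a puncture (the puncture case of (a) being Lemma~\ref{3.8} together with Remark~\ref{multiple}). In the loop case Lemma~\ref{3.6} gives directly $\cosh\frac{a}{2}=2\cosh\frac{x}{2}\cosh\frac{y}{2}\pm\cosh\frac{z}{2}$, i.e. $\lambda(\alpha)=\lambda(\alpha_1)\lambda(\alpha_2)\pm\lambda(\beta)$ since $\lambda$ of a loop is $2\cosh(\ell/2)$ and the factor of $2$ is already present; in the arc case Lemma~\ref{3.7} gives $e^{a/2}=2\cosh\frac{x}{2}e^{y/2}\pm e^{z/2}$, again matching $\lambda(\alpha)=\lambda(\alpha_1)\lambda(\alpha_2)\pm\lambda(\beta)$ with $\alpha_1$ the loop component and $\alpha_2$ the arc component of the separating resolution. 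The sign in front of $\lambda(\beta)$ is $+$ when $c(\beta)=0$ and $-$ when $c(\beta)=1$, which is exactly $(-1)^{c(\beta)}$; and since $\alpha$ is a geodesic, $c(\alpha)=0=c(\alpha_1)=c(\alpha_2)$ (the separating resolution of a geodesic has geodesic representatives with no curls — a bigon or curl in $\alpha_1$ or $\alpha_2$ would descend to one in $\alpha$), so the other signs are trivial. One must also handle the degenerate situations where a resolution component is a loop around a puncture, using the puncture relation $\lambda(\text{circle around }v)=2$ and the degenerate cosine laws already noted in the statements of Lemmas~\ref{3.6}--\ref{3.8}; this is routine.

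The main obstacle, and the only genuinely delicate point, is the sign reconciliation in the $c(\beta)=1$ case: one must be certain that the unique configuration producing a curl identified in Lemma~\ref{lemma:nonzero} is \emph{exactly} the configuration producing the ``twisted'' generalized triangle that flips the sign of $\cosh\frac{z}{2}$ (resp. $e^{z/2}$) in Lemmas~\ref{3.6}--\ref{3.7}, with no off-by-one discrepancy and no additional curls appearing on the \emph{separating} resolution. I would dispatch this by drawing the lift picture: the curl in $\beta$ comes from the intersection point $T$ between the two lifts $B,B'$ of $\alpha$ through adjacent lifts of $p$, and this $T$ is precisely the data that forces the generalized triangle $PM_1Z_1Z_2N_1$ to be twisted — so the correspondence is a bijection, not merely an implication. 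A secondary point to check is that in part~(b) the resolution that can pick up a curl is correctly identified with the geometric configuration of Lemma~\ref{lemma:nonzero} even though there $\alpha$ and $\beta$ are \emph{distinct} curves; but since the local picture near $p$ only sees two strands, the same analysis applies verbatim after passing to the connected resolution. Everything else is bookkeeping of the already-established trigonometric identities.
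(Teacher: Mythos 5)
There is a genuine gap: your argument only establishes the case in which the curves involved are geodesics, which is precisely the base case of the paper's proof, and it omits the part of the statement that actually requires work. Proposition~\ref{prop:induction} is asserted for arbitrary generalized curves, i.e.\ regular isotopy classes in which Reidemeister Move I is \emph{not} allowed; such curves need not be regularly isotopic to geodesics, and this is exactly why the signs $(-1)^{c(\cdot)}$ appear in the formula. Yet at the decisive moments you assume geodesicity outright (``because $\alpha$ and $\beta$ are geodesics hence have $c=0$'' in (b), ``since $\alpha$ is a geodesic, $c(\alpha)=0=c(\alpha_1)=c(\alpha_2)$'' in (a)). The multiplicative reduction at the start of (b) does not make the remaining components geodesic, and one cannot simply replace $\alpha$ by its geodesic representative $\overline{\alpha}$: the resolutions in the statement are resolutions of the given diagram at $p$, and when $p$ is a vertex of a bigon (of $\alpha$ with itself, or of $\alpha$ with $\beta$) the geodesic representative has no corresponding crossing at all, so the length identities of Section~\ref{lengths} cannot be applied directly. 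The paper deals with this by an induction on the number of intersection points: in the bigon case it removes the bigon by a Reidemeister Move II, identifies the resolutions of the original configuration with resolutions of the simpler curve at the \emph{other} vertex $p'$ of the bigon (with one curling number shifted by $1$), and applies the inductive hypothesis, with formulas (a) and (b) feeding into each other; it also treats the curl case separately (one separating component is a trivial loop, contributing $(-1)^{c}\lambda=-2$), and in the no-bigon case it justifies $c(\beta)-c(\alpha)\in\{0,1\}$ for a general $\alpha$ by pushing the curls together before invoking Lemma~\ref{lemma:nonzero}. None of this inductive bookkeeping appears in your proposal; the sentence ``everything else is bookkeeping of the already-established trigonometric identities'' is exactly where the missing proof lives.

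Two smaller points. In (b), for two distinct geodesics the resolutions at an intersection acquire no curls at all (a curl would force a null-homotopic loop made of two geodesic arcs with common endpoints, which is impossible), so Lemma~\ref{lemma:nonzero} --- which concerns the non-separating resolution at a \emph{self}-intersection --- is not the relevant tool there; what the geodesic case of (b) needs is just part (1) of Lemmas~\ref{3.1}--\ref{3.3}, whereas taking the sum of parts (1) and (2) as you propose isolates only the positive resolution and does not yield the stated identity. Also, the puncture case you fold into (a) via Lemma~\ref{3.8} belongs to Proposition~\ref{prop:puncture}, not to this statement. Your discussion of the twisted cosine laws and of the exact correspondence between the curl configuration of Lemma~\ref{lemma:nonzero} and the twisted generalized triangle is correct and matches the paper's treatment of the geodesic case of (a), but by itself it does not prove the proposition.
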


\begin{proof} First note that by part (1) of Lemma \ref{3.1}\,-\,\ref{3.3}, Lemma \ref{3.6} and \ref{3.7}, the formulae (a) and (b) are true when $\alpha$ and $\beta$ are geodesics. In the general case when the generalized curves are not all geodesics, we use induction on the number of intersection points. If a generalized curve $\alpha$ has only one self-intersection $p$ which is a vertex of a curl $C$, then we let $\alpha'$ be the generalized curve obtained from $\alpha$ by removing $C$ via a Reidemeister Move I. We let $\alpha_1$ and $\alpha_2$ be the components of the separating resolution of $\alpha$ at $p$, and let $\beta$ be the non-separating resolution of $\alpha$ at $p$. Note that one of $\alpha_1$ and $\alpha_2$, say $\alpha_1$, is a trivial loop since $p$ is the vertex of the curl $C$; and $\alpha_2$ and $\beta$ are regularly isotopic to $\alpha'$. We have $(-1)^{c(\alpha)}\lambda(\alpha)=-(-1)^{c(\alpha')}\lambda(\alpha')$ and $(-1)^{c(\alpha_1)+c(\alpha_2)}\lambda(\alpha_1)\lambda(\alpha_2)+(-1)^{c(\beta)}\lambda(\beta)=-(-1)^{c(\alpha_2)}2\lambda(\alpha_2)+(-1)^{c(\beta)}\lambda(\beta)=-(-1)^{c(\alpha')}\lambda(\alpha').$ Hence (a) is true in this case. If $\alpha$ has only one self-intersection and no curl, then $\alpha$ is regularly isotopic to a geodesic, and (a) is true by Lemma \ref{3.6} and \ref{3.7}. If two simple generalized curves $\alpha$ and $\beta$ have only one intersection, then $\alpha$ and $\beta$ are regularly isotopic to geodesics, and (b) is true by (1) of Lemma \ref{3.1}\,-\,\ref{3.3}. Now we assume that formula (a) holds when the number of self-intersections of $\alpha$ is less than $n$, and formula (b) holds when the number of crossings $\alpha\cup\beta$ is less than $n$. 

For (a),  if the number of self-intersections of $\alpha$ is equal to $n$, we have to consider the following two cases: 
\begin{enumerate}[(1)]
\item $p$ is not a vertex of a bi-gon bounded by $\alpha$; and
\item $p$ is a vertex of a bi-gon $B$ bounded by $\alpha$. 
\end{enumerate}
We denote by $\overline{\gamma}$ the unique geodesic in the homotopy class of a generalized curve $\gamma$. In case (1),  we have that $c(\alpha)=c(\alpha_1)+c(\alpha_2)$ and $0\leqslant c(\beta)-c(\alpha)\leqslant1$. Moreover, $c(\beta)-c(\alpha)=1$ if and only if the non-separating resolution $\beta'$ of $\overline{\alpha}$ contains a curl, i.e., $c(\beta)-c(\alpha)=c(\beta')$. The way to see this is that we first push all the curls together by regular isotopy so that away from the curls the curve is equivalent to a geodesic and then apply Proposition \ref{lemma:nonzero}. In this case, we have 
\begin{equation*}
\begin{split}
(-1)^{c(\alpha)}\lambda(\alpha)&=(-1)^{c(\alpha)}\lambda(\overline{\alpha})\\
&=(-1)^{c(\alpha)}\big(\lambda(\overline{\alpha_1})\lambda(\overline{\alpha_2})+(-1)^{c(\beta')}\lambda(\beta')\big)\\
&=(-1)^{c(\alpha_1)+c(\alpha_2)}\lambda(\alpha_1)\lambda(\alpha_2)+(-1)^{c(\beta)}\lambda(\beta).
\end{split}
\end{equation*}
In case (2), there is a curl $C$ generated from the bi-gon $B$ whose vertex is the other vertex $p'$ of $B$ . If $C$ is in one of the component of the separating resolution of $\alpha$, say $\alpha_1$,  then let $\alpha_1'$ be the curve obtained from $\alpha_1$ by removing $C$ via a Reidemeister Move I. Let $\alpha'$ be the curve obtained from $\alpha$ by removing $B$ via a Reidemeister Move II. Then we have $c(\alpha')=c(\alpha)$ and $c(\alpha_1')=c(\alpha_1)-1$, and formula (a) is equivalent to 

$$(-1)^{c(\beta)}\lambda(\beta)=(-1)^{c(\alpha')}\lambda(\alpha')+(-1)^{c(\alpha_1')+c(\alpha_2)}\lambda(\alpha_1')\lambda(\alpha_2),$$
which holds by the inductive assumption to formula $(a)$. Indeed, the generalized curves $\alpha_1'$ and $\alpha_2$ are regularly isotopic to the components of the separating resolution of $\beta$ at $p'$ and $\alpha'$ is regularly isotopic to the non-separating resolution of $\beta$ at $p'$, and the number of self-intersections of $\beta$ is less than $n$. If $C\subset\beta$, then let $\beta'$ be the curve obtained from $\beta$ by removing $C$ via a Reidemeister Move I. We have $c(\beta')=c(\beta)-1$, and that formula (a) is equivalent to 
$$(-1)^{c(\alpha_1)+c(\alpha_2)}\lambda(\alpha_1)\lambda(\alpha_2)=(-1)^{c(\alpha')}\lambda(\alpha')+(-1)^{c(\beta')}\lambda(\beta'),$$
which holds by the induction assumption to formula (b). Indeed, the generalized curves $\alpha'$ and $\beta'$ are regularly isotopic to resolutions of $\alpha_1\cup\alpha_2$ at $p'$. 

For (b), we have to consider the following two cases:
\begin{enumerate}[(1)]
\item $p$ is not a vertex of a bi-gon bounded by $\alpha$ and $\beta$; and

\item $p$ is a vertex of a bi-gon $B$ bounded by $\alpha$ and $\beta$.
\end{enumerate}
We still denote by $\overline{\gamma}$ the unique geodesic in the homotopy class of a generalized curve $\gamma$. In case (1), the resolutions $p$ do not change the number of curls. We have $c(\alpha)+c(\beta)=c(\gamma_1)=c(\gamma_2),$ and 

\begin{equation*}
\begin{split}
(-1)^{c(\alpha)+c(\beta)}\lambda(\alpha)\lambda(\beta)&=(-1)^{c(\alpha)+c(\beta)}\lambda(\overline{\alpha})\lambda(\overline{\beta})\\
&=(-1)^{c(\alpha)+c(\beta)}\big(\lambda(\overline{\gamma_1})+\lambda(\overline{\gamma_2})\big)\\
&=(-1)^{c(\gamma_1)}\lambda(\gamma_1)+(-1)^{c(\gamma_2)}\lambda(\gamma_2).
\end{split}
\end{equation*}
In case (2), there is a curl $C$ generated from the bi-gon $B$ whose vertex is the other vertex $p'$ of $B$. If $C\subset \gamma_1$, say, then let $\gamma_1'$ be the curve obtained from $\alpha_1$ be removing $C$ via a Reidemeister Move I. Let $\alpha'$ and $\beta'$ be the curve obtained from $\alpha$ and $\beta$ by removing $B$ via a Reidemeister Move II. We have $c(\alpha')=c(\alpha)$, $c(\beta')=c(\beta)$ and $c(\gamma_1')=c(\gamma_1)-1$, and the result is equivalent to 

$$(-1)^{c(\gamma_2)}\lambda(\gamma_2)=(-1)^{c(\alpha')+c(\beta')}\lambda(\alpha')\lambda(\beta')+(-1)^{c(\gamma_1')}\lambda(\gamma_1'),$$
which holds by formula (a), since $\alpha'$ and $\beta'$ are regularly isotopic to the components of the separating resolution of $\gamma_2$ at $p'$ and $\gamma_1'$ is regularly isotopic to the non-separating resolution of $\gamma_2$ at $p'$.
\end{proof}

\begin{proposition}\label{prop:puncture} 
\begin{enumerate}[(a)]
\item For an arc $\alpha$ both of whose end points are at the same puncture $v$, let $\beta$ and $\gamma$ be the resolutions of $\alpha$ at $v$, and let $r(v)$ be the length of the horocycle centered at $v$. Then we have 
$$(-1)^{c(\alpha)}\lambda(\alpha)=\frac{1}{r(v)}\big((-1)^{c(\beta)}\lambda(\beta)+(-1)^{c(\gamma)}\lambda(\gamma)\big).$$

\item Let $\alpha$ and $\beta$ be two arcs intersecting at a puncture $v$. If $\gamma_1$ and $\gamma_2$ are the resolutions of $\alpha$ and $\beta$ at $v$, then we have 
$$(-1)^{c(\alpha)+c(\beta)}\lambda(\alpha)\lambda(\beta)=\frac{1}{r(v)}\big((-1)^{c(\gamma_1)}\lambda(\gamma_1)+(-1)^{c(\gamma_2)}\lambda(\gamma_2)\big).$$
\end{enumerate}
\end{proposition}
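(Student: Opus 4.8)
The plan is to reduce both identities to the case where the curves involved are geodesics — which is exactly what the length identities of Section~\ref{lengths} provide — and then to propagate to arbitrary generalized curves by keeping track of curling numbers under a reduction to minimal position.

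\emph{Geodesic case.} Suppose $\alpha$ (and, in (b), also $\beta$) is the geodesic representative of its homotopy class. Then all curling numbers occurring in the two formulas vanish: a resolution at $v$ of a geodesic, or of two geodesics, never carries a curl, since its lift to $\mathbb{H}^2$ is a concatenation of two geodesic segments asymptotic to a common lift of $v$, joined by a horocyclic segment at that lift, and such a path is embedded (geodesics sharing an ideal endpoint are disjoint, and the horocyclic segment meets each of them only at an endpoint); components that are circles around a puncture also have curling number $0$. Granting this, formula (a) is Lemma~\ref{3.8} and formula (b) is Lemma~\ref{3.4} — together with Remark~\ref{multiple} when $\alpha$ or $\beta$ has several ends at $v$ — once $\lambda$ of a loop is written as $2\cosh(\ell/2)$, $\lambda$ of an arc as $e^{\ell/2}$, and $\lambda$ of a circle around a puncture as $2$.

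\emph{General case.} I would use that any generalized curve $\alpha$ is regularly isotopic to $\overline{\alpha}$ with $c(\alpha)$ curls inserted at interior points: strip the curls by one-way Reidemeister Moves~I, then remove all bigons — including Reidemeister~II$'$ configurations, i.e.\ bigons with a vertex at a puncture — to reach minimal position, which on a hyperbolic surface coincides with $\overline{\alpha}$ up to regular isotopy. The resolution at $v$ takes place in a disk near $v$, disjoint from the curls; hence each resolution $\beta$, $\gamma$ of $\alpha$ at $v$ (resp.\ each component of the sums $\gamma_1$, $\gamma_2$ in (b)) is regularly isotopic to the corresponding resolution of $\overline{\alpha}$ (resp.\ of $\overline{\alpha}\cup\overline{\beta}$) with the same curls reinserted. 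Combined with the geodesic case this gives $c(\beta)=c(\gamma)=c(\alpha)$ in (a) and $c(\delta)=c(\alpha)+c(\beta)$ for every component $\delta$ of $\gamma_1$ or $\gamma_2$ in (b), while the $\lambda$'s are unchanged, so the identities follow from the geodesic ones after multiplying through by $(-1)^{c(\alpha)}$, respectively $(-1)^{c(\alpha)+c(\beta)}$. Alternatively, one can run the same induction on the number of interior crossings as in Proposition~\ref{prop:induction}, using Lemmas~\ref{3.8} and~\ref{3.4} in place of Lemmas~\ref{3.6} and~\ref{3.7} and invoking Proposition~\ref{prop:induction} itself whenever a reduction step lands at an interior crossing.

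\emph{Main difficulty.} The delicate point is the curling-number bookkeeping. In contrast with an interior self-intersection — where the non-separating resolution of a geodesic may pick up one curl (Lemma~\ref{lemma:nonzero}), which is why the twisted cosine laws and the case split of Proposition~\ref{prop:induction} are needed — one must check that resolving at a puncture never creates a curl, and one must be careful to remove Reidemeister~II$'$ configurations (bigons with a vertex at $v$, which do \emph{not} count toward $c(\alpha)$) before resolving, since keeping them would introduce a spurious sign. Once these points are settled the remaining verification, including the degenerate subcases where a component is a circle around a puncture, is routine and parallels Proposition~\ref{prop:induction}.
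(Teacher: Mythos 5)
Your geodesic case is fine and matches the paper (Lemma~\ref{3.8} for (a), Lemma~\ref{3.4}(1) with Remark~\ref{multiple} for (b), all curling numbers vanishing). The gap is in the reduction to it. You claim that the resolution at $v$ ``takes place in a disk near $v$'' and therefore commutes, up to regular isotopy and reinsertion of curls, with tightening $\alpha$ to $\overline{\alpha}$, so that $c(\beta)=c(\gamma)=c(\alpha)$. This fails exactly when $v$ is a vertex of a generalized bigon bounded by $\alpha$ (a Reidemeister~II$'$ configuration at $v$): removing that bigon is a regular isotopy of $\alpha$, but it does \emph{not} preserve the regular isotopy classes of the resolutions at $v$. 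In that configuration one resolution, say $\gamma$, acquires a curl (so $c(\gamma)=c(\beta_2)+1$ for the reduced curve $\beta_2$, flipping the sign), and the other resolution $\beta$ is a curve enclosing the puncture with a surviving self-crossing, so it is not regularly isotopic to a resolution of $\overline{\alpha}$ with curls reinserted at all. Your proposed fix --- ``remove the II$'$ configurations before resolving'' --- changes the statement: the proposition asserts the identity for the resolutions of the given $\alpha$, and this is what is needed later (well-definedness of $\Phi$ requires compatibility with the puncture-skein relation applied to an arbitrary diagram). After reducing you would still owe the identity $(-1)^{c(\beta)}\lambda(\beta)+(-1)^{c(\gamma)}\lambda(\gamma)=(-1)^{c(\beta_1)}\lambda(\beta_1)+(-1)^{c(\beta_2)}\lambda(\beta_2)$ relating the resolutions before and after the II$'$ move, and that is precisely the nontrivial content you have skipped. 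Relatedly, your assertion in the last paragraph that ``resolving at a puncture never creates a curl'' is false for such representatives; it is true only in the absence of a generalized bigon at $v$ (e.g.\ for geodesics).

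The paper's proof supplies exactly this missing step. In its case (2) (generalized bigon $B$ at $v$) it writes $\gamma$ as $\beta_2$ plus a curl, so $(-1)^{c(\gamma)}\lambda(\gamma)=-(-1)^{c(\beta_2)}\lambda(\beta_2)$, and resolves the puncture-enclosing curve $\beta$ at the other vertex $p'$ of $B$ using Proposition~\ref{prop:induction}(a): the separating resolution contributes a circle around $v$ (of generalized trace $2$) times $\beta_2$, giving $(-1)^{c(\beta)}\lambda(\beta)=(-1)^{c(\beta_1)}\lambda(\beta_1)+2(-1)^{c(\beta_2)}\lambda(\beta_2)$. Adding the two displays, the unwanted terms cancel to give $(-1)^{c(\beta_1)}\lambda(\beta_1)+(-1)^{c(\beta_2)}\lambda(\beta_2)$, which are the resolutions of the II$'$-reduced curve $\alpha'$, and only then does one apply your case-(1)-type argument to $\alpha'$; part (b) is handled analogously. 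So to complete your proof you must add this computation (or an equivalent invariance-under-II$'$-at-$v$ argument), invoking Proposition~\ref{prop:induction}(a) and the value $2$ for a loop around a puncture; without it the ``general case'' as you wrote it is not correct.
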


\begin{proof} By Lemma \ref{3.8}, part (a) is true when $\alpha$ is a geodesic. In the general case when $\alpha$ is not a geodesic, we have the following two cases:
\begin{enumerate}[(1)]
\item $v$ is not a vertex of a generalized bi-gon bounded by $\alpha$, that is, a bi-gon with one of its vertices a puncture; and
\item $v$ is a vertex of a generalized bi-gon $B$ bounded by $\alpha$.
\end{enumerate}
In case (1), we have $c(\alpha)=c(\beta)=c(\gamma)$, and the formula follows from the case that $\alpha$ is geodesic. In case (2), we see one of the resolutions of $\alpha$ at $v$, say $\gamma$, contains a curl from the generalized bi-gon $B$, then the other resolution $\beta$ is the one enclosing the puncture $v$. We let $\beta_1$ be the non-separating resolution of $\beta$ and let $\beta_2$ be the component of the separating resolution of $\beta$ which is not a circle around $v$. Then we have $c(\gamma)=c(\beta_2)+1$ and $\lambda(\gamma)=\lambda(\beta_2)$, hence $(-1)^{c(\gamma)}\lambda(\gamma)=-(-1)^{c(\beta_2)}\lambda(\beta_2)$. By Lemma \ref{prop:induction}, we have $(-1)^{c(\beta)}\lambda(\beta)=(-1)^{c(\beta_1)}\lambda(\beta_1)+(-1)^{c(\beta_2)}2\lambda(\beta_2),$ which implies that $(-1)^{c(\beta)}\lambda(\beta)+(-1)^{c(\gamma)}\lambda(\gamma)=(-1)^{c(\beta_1)}\lambda(\beta_1)+(-1)^{c(\beta_2)}\lambda(\beta_2).$
Let $\alpha'$ be the curve obtained from $\alpha$ by removing the generalized bi-gon $B$ via a Reidemeister Move II$'$, then $\beta_1$ and $\beta_2$ are regularly isotopic to the resolutions of $\alpha'$ at $v$. By case (1) and the last equation above, we have 
\begin{equation*}
\begin{split}
(-1)^{c(\alpha)}\lambda(\alpha)&=(-1)^{c(\alpha')}\lambda(\alpha')\\
&=\frac{1}{r(v)}\big((-1)^{c(\beta_1)}\lambda(\beta_1)+(-1)^{c(\beta_2)}\lambda(\beta_2)\big)\\
&=\frac{1}{r(v)}\big((-1)^{c(\beta)}\lambda(\beta)+(-1)^{c(\gamma)}\lambda(\gamma)\big)
\end{split}
\end{equation*}
Formula (b) is a consequence of Lemma \ref{3.4}\,(1); and the proof is similar to that of (a).
\end{proof}

Combining Propositions \ref{prop:induction} and \ref{prop:puncture}, we obtain the following intermediate theorem.

\begin{theorem}\label{main'} The map $\Phi\colon\C(\s)\rightarrow C^{\infty}(\T^d(\s))$ defined in Theorem \ref{main} is a well-defined commutative algebra homomorphism.
\end{theorem}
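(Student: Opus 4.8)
The plan is to exhibit $\Phi$ as the factorization through $\C(\s)$ of an essentially tautological homomorphism out of a free algebra; the only substantive input is Propositions~\ref{prop:induction} and \ref{prop:puncture}, so this theorem is the step that repackages the hyperbolic geometry of Section~\ref{lengths} into algebraic form.

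\emph{Step 1: a homomorphism from the free algebra.} Recall that $\C(\s)$ is the quotient of the free $\mathbb{C}$--algebra $\mathcal{F}$ generated by the regular isotopy classes of generalized curves, the punctures and their formal inverses by relations $(1')$--$(4')$, the product being union. I would first check that the assignment $v\mapsto r(v)$, $v^{-1}\mapsto r(v)^{-1}$, $\alpha\mapsto(-1)^{c(\alpha)}\lambda(\alpha)$ extends to a $\mathbb{C}$--algebra homomorphism $\widetilde\Phi\colon\mathcal{F}\to C^\infty(\T^d(\s))$. This is formal: the curling number $c$ is invariant under regular isotopy (Reidemeister Moves~II, II$'$, III neither create nor destroy curls), and for $\alpha=\bigcup_i\alpha_i$ the number $\lambda(\alpha)=\prod_i\lambda(\alpha_i)$ depends only on the free homotopy classes of the loop components and on the homotopy classes rel endpoints of the arc components together with the decoration, each factor being a smooth function on $\T^d(\s)$ (by Penner \cite{Penner1} for the $\lambda$--lengths of arcs, classically for the trace functions of loops); moreover $r(v)>0$, so $r(v)^{-1}$ is a well-defined smooth function and $r(v)\cdot r(v)^{-1}=1$. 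Multiplicativity on the generators is immediate since $c(\alpha\cup\beta)=c(\alpha)+c(\beta)$ and $\lambda(\alpha\cup\beta)=\lambda(\alpha)\lambda(\beta)$ are computed componentwise, and the empty curve goes to $1$.

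\emph{Step 2: the defining relations.} It then suffices to show that $\widetilde\Phi$ annihilates relations $(1')$--$(4')$, so that it descends to $\C(\s)$. Relations $(3')$ and $(4')$ are read off the definitions: a contractible loop has a point as geodesic representative, so $\lambda=2\cosh 0=2$, while its curling number is $1$ (the $+1$ appearing in the definition of $c$ for a null--homotopic loop), whence $\widetilde\Phi=-2$; a loop around a puncture has parabolic monodromy, so its trace has absolute value $2$, i.e. $\lambda=2$, and its curling number is $0$, whence $\widetilde\Phi=2$. For the interior skein relation $(1')$ I would simply invoke Proposition~\ref{prop:induction}: a distinguished crossing in a diagram is either a crossing of two strands of a single component --- covered by part (a), after identifying one resolution with the separating resolution $\alpha_1\cup\alpha_2$ (whose image is $(-1)^{c(\alpha_1)+c(\alpha_2)}\lambda(\alpha_1)\lambda(\alpha_2)$ by Step~1) and the other with the non--separating resolution --- or a crossing of two distinct components --- covered by part (b), which subsumes the loop--loop, arc--loop and arc--arc subcases. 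Likewise the puncture--skein relation $(2')$ is exactly Proposition~\ref{prop:puncture}: case (a) when the two resolved ends at $v$ are the two ends of a single arc, case (b) when they are ends of two distinct arcs (any further ends at $v$ being left untouched). Together with $v\cdot v^{-1}\mapsto 1$, this shows that $\widetilde\Phi$ descends to a $\mathbb{C}$--algebra homomorphism $\Phi\colon\C(\s)\to C^\infty(\T^d(\s))$, which is the map of Theorem~\ref{main}; commutativity is inherited from $\C(\s)$.

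\emph{The main point.} Given Propositions~\ref{prop:induction} and \ref{prop:puncture}, there is no analytic difficulty remaining; the only thing requiring attention is bookkeeping --- verifying that the list of possible distinguished crossings in a diagram (self-crossing of a loop or of an arc, and crossings between two components, each either in the interior of $\s$ or at a puncture) is exactly exhausted by the four cases of those two propositions, and that the sign conventions built into $(-1)^{c}$ are precisely the ones with which the propositions were proved. The genuine work --- the lengths identities of Section~\ref{lengths} and the induction on the number of crossings that controls how curling numbers behave under resolution --- has already been carried out there, and Theorem~\ref{main'} is its algebraic consequence.
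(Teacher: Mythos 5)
Your proposal is correct and follows the same route as the paper: the paper gives no separate argument for this theorem beyond "combining Propositions~\ref{prop:induction} and \ref{prop:puncture}," and your write-up simply makes explicit the routine verification (factoring through the free algebra, checking relations $(1')$--$(4')$ via the two propositions, and evaluating the framing and puncture relations using $c(\bigcirc)=1$, $c(\bigodot)=0$ and $\lambda=2$). The bookkeeping you flag — that the four cases of the propositions exhaust the possible crossings and that the sign $(-1)^{c}$ is the one built into their statements — is exactly the content the paper leaves implicit.
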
 

In \cite{Bu}, Bullock conjectured that the map he constructed from the non-quantum skein algebra to the coordinate ring $X(S)$ of the character variety was in fact an isomorphism, which he reduced to the question of showing that there are no non-zero nilpotent elements in $X(S)$. This question was later settled by Przytycki and Sikora \cite{PS}. We thus state the following conjecture.

\begin{conjecture}
The map $\Phi\colon\C(\s)\rightarrow C^{\infty}(\T^d(\s))$ is injective.
\end{conjecture}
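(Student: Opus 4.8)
The plan is to reduce the conjecture to a purely algebraic statement about Laurent polynomials and then attack that statement by a leading--term argument. Fix an ideal triangulation $T$ of $\s$ with edge set $E$. By Penner's theorem \cite{Penner1} the $\lambda$--lengths of the edges give a real--analytic diffeomorphism $\T^d(\s)\cong\R^E_{>0}$, and by the reasoning behind the Laurent expansion announced as Proposition~\ref{Laurent}, together with Penner's $h$--length formula expressing each horocyclic length $r(v)$ as a Laurent polynomial in the $\lambda(e)$, the homomorphism $\Phi$ factors as $\C(\s)\xrightarrow{\ \Phi_T\ }\mathcal{L}_T\hookrightarrow C^\infty(\T^d(\s))$, where $\mathcal{L}_T=\mathbb{C}[\lambda(e)^{\pm 1}:e\in E]$. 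Since $\R^E_{>0}$ is Zariski dense in $(\mathbb{C}^\times)^E$, a Laurent polynomial vanishing on $\R^E_{>0}$ is zero, so $\Phi$ is injective if and only if the algebra map $\Phi_T$ is injective.

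Next I would fix a convenient $\mathbb{C}[V^{\pm 1}]$--basis of $\C(\s)$. Call a generalized curve \emph{reduced} if it is a disjoint union of essential simple arcs and loops, with no curl, no contractible loop, and no loop bounding a puncture, loops being allowed with multiplicities. The same state--sum/resolution argument used above to prove that $\AS_h(\s)$ is topologically free shows that the reduced multicurves form a $\mathbb{C}[V^{\pm 1}]$--basis of $\C(\s)$: one resolves all crossings and all puncture--crossings by the skein relations (1$'$)--(2$'$) and discards trivial and peripheral loops via (3$'$)--(4$'$). Thus it suffices to prove that $\Phi_T$ sends distinct reduced multicurves — with their $v^{\pm 1}$--monomial coefficients, which are harmless since the $v$ go to the algebraically independent elements $r(v)\in\mathcal{L}_T$ — to linearly independent elements of $\mathcal{L}_T$.

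The core of the argument is a leading--term estimate. To a reduced multicurve $c$ associate its vector of geometric intersection numbers with the edges of $T$, namely $i(c)=(i(c,e))_{e\in E}\in\mathbb{Z}_{\ge 0}^E$; by Thurston's parametrization of multicurves on a punctured surface, $c\mapsto i(c)$ is injective on reduced multicurves. I would show that $\Phi_T(c)$ has a distinguished extremal monomial $\pm\prod_e\lambda(e)^{n_e(c)}$ whose exponent vector $n(c)$ is an explicit injective function of $i(c)$: resolving $c$ against $T$ triangle by triangle, the unique resolution that never ``turns a corner'' contributes the monomial whose $e$--exponent is $i(c,e)$ up to a correction determined by $i(c,\cdot)$ and $T$ alone, while every other resolution contributes a monomial strictly smaller in the partial order given by the corner count. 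For $c$ a union of arcs this is immediate from the subtraction--freeness of $\lambda$--lengths in Penner coordinates. Distinct extremal monomials then force linear independence of the $\Phi_T(c)$, hence injectivity of $\Phi_T$.

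The main obstacle is the extremal--term claim when $c$ contains loops: the Laurent expansion of a trace function in edge $\lambda$--lengths is \emph{not} subtraction--free, so one must rule out cancellation in the top monomial. This calls for a genuine convexity argument on Newton polytopes, most naturally carried out in Bonahon's shear coordinates \cite{Bo} (which already underlie Proposition~\ref{Laurent}), showing that the extremal vertex of the Newton polytope of $\Phi_T(c)$ is contributed by a single ``straight'' resolution. In effect one is after the analogue for $\T^d(\s)$ of the Przytycki--Sikora theorem \cite{PS} that the skein algebra embeds in the coordinate ring of the character variety; the genuinely new difficulty is to control, through the puncture--skein relation, the interaction between the arc generators and the loop generators, which is why the statement is recorded here only as a conjecture.
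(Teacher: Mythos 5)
There is no proof in the paper to compare against: the statement is recorded there precisely as a conjecture, and your argument does not close it. The reduction is fine as far as it goes (Penner's coordinates identify $\T^d(\s)$ with $\R^E_{>0}$, Proposition~\ref{Laurent} and Penner's $h$--length formula place the image of $\Phi$ in the Laurent ring $\mathcal{L}_T$, and density of $\R^E_{>0}$ in $(\mathbb{C}^\times)^E$ reduces injectivity of $\Phi$ to injectivity of $\Phi_T$), but the heart of the matter is exactly the step you yourself flag as missing: the claim that for a reduced multicurve containing loops the Newton polytope of $\Phi_T(c)$ has an extremal vertex realized by a single resolution, so that no cancellation can kill it. The expansion of a trace function in edge $\lambda$--lengths is not subtraction--free (the signs $(-1)^{c(\alpha)}$ and the curling--number bookkeeping of Proposition~\ref{prop:induction} enter with both signs), and no convexity or ``straightening'' lemma is proved, nor is it explained how shear coordinates --- which do not see the decoration and only pertain to loops --- would interact with the arc generators through the puncture--skein relation. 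As written, this is a plausible program, not a proof; it is the analogue of the nilpotency problem that Bullock had to leave open and Przytycki--Sikora later settled, and nothing in your sketch substitutes for that input.

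Even granting a leading--term mechanism, two subsidiary steps are incorrect or too quick as stated. First, the exponent of the claimed extremal monomial cannot be ``an explicit injective function of $i(c)$'': the map $c\mapsto (i(c,e))_{e\in E}$ is not injective on reduced multicurves once arcs are allowed, since every edge of $T$ (and any arc isotopic to one) has $i(c,e)=0$ for all $e\in E$; the extremal exponent must be defined and shown injective directly, and no such statement is established for multicurves mixing arcs and loops. Relatedly, your notion of ``reduced'' is not a basis: two arc ends meeting at the same puncture still form an intersection at $V$ and are resolvable by relation (2$'$), so the basis extracted from the state--sum argument consists of diagrams with at most one arc end at each puncture (and in particular no repeated arcs), not arbitrary disjoint unions of simple arcs. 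Second, dismissing the $\mathbb{C}[V^{\pm1}]$ coefficients as ``harmless'' because the $r(v)$ are algebraically independent is insufficient: injectivity requires the images $\Phi_T(c)$ to be linearly independent over the subalgebra generated by the $r(v)^{\pm1}$, and since each $r(v)$ is itself a nontrivial sum of Laurent monomials (Penner's $h$--lengths), these coefficients contribute their own Newton polytopes and must be incorporated into whatever extremal--term argument you eventually supply.
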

\subsection{The homomorphism of Poisson algebras}\label{Poissonalg}

To complete the proof of Theorem \ref{main}, we need the following lemma.

\begin{lemma}\label{product}
Let $T$ be an ideal triangulation of a punctured surface $\s$, and $E$ be its set of edges. Suppose $\alpha$ is a generalized curve on $\s$ and $i(\alpha,e)$ is the number of intersection points of $\alpha$ and $e\in E$. Then the product $\alpha\cdot\prod_{e\in E}e^{i(\alpha,e)}$ in $\C(\s)$ can be expressed as a polynomial $P_{\alpha}$ with variables in $E$.
\end{lemma}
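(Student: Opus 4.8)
The plan is to simplify $\alpha\cdot\prod_{e\in E}e^{i(\alpha,e)}$ geometrically, resolving every crossing by the ordinary skein relation $(1')$ alone and controlling the result by an induction on the total intersection number with the triangulation. First I would put $\alpha$ in taut position relative to $T$: transverse to each edge, meeting $e$ in exactly $i(\alpha,e)$ points, with minimal self-intersection, with no trivial loop, peripheral loop or contractible arc component (the loop components being discarded beforehand via $(3')$ and $(4')$), and forming no bigon or half-bigon with the $1$--skeleton. For each $e\in E$ I then take $i(\alpha,e)$ parallel push-offs of $e$, each hugging $e$ so tightly that the diagram $\alpha\cup(\text{all push-offs})$, which represents the product in $\C(\s)$, has only interior transverse crossings -- self-crossings of $\alpha$ and crossings of $\alpha$ with a push-off; the push-offs are pairwise disjoint because the edges of $T$ are, and near each puncture the finitely many incident ends can be combed into a fixed radial order. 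This combing is the point at which I guarantee that no crossing at a puncture is ever created, so the puncture-skein relation $(2')$ -- the only relation that produces a factor $v^{\pm1}$ -- is never invoked and no $v^{\pm1}$ appears in the answer.

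I would then induct on $N(\alpha)=\sum_{e\in E}i(\alpha,e)$, proving a little more: that $\alpha\cdot\prod_{e}e^{m_{e}}$ lies in the subalgebra generated by $E$ for any $m_{e}\geq i(\alpha,e)$. If $N(\alpha)=0$ then $\alpha$ is disjoint from the edges, hence each of its taut essential components lies in a single ideal triangle; a triangle carries no essential loop and its only essential arcs are its three sides, so $\alpha$ is already a disjoint union of edges, i.e. a monomial in $E$. If $N(\alpha)>0$, pick $e$ with $i(\alpha,e)\geq1$ and write $\alpha\cdot\prod_{e'}e'^{m_{e'}}=(\alpha\cdot\hat e)\cdot\prod_{e'}e'^{\,m_{e'}-[e'=e]}$, with $\hat e$ a single push-off of $e$ meeting $\alpha$ in its $i(\alpha,e)$ crossing points. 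Resolving all of these crossings with $(1')$ rewrites $\alpha\cdot\hat e$ as $\sum_{j}c_{j}\gamma_{j}$ with $c_{j}\in\mathbb{C}$ and $\gamma_{j}$ generalized curves -- trivial and peripheral loops (and curls) produced along the way being absorbed into the $c_{j}$ via $(3')$ and $(4')$ -- and with no $v^{\pm1}$, by the combing. Each $\gamma_{j}$ is built from sub-arcs of $\alpha$ and of $e$, the rerouting occurring in a neighborhood of $e$ disjoint from $E\setminus\{e\}$, so $i(\gamma_{j},e)=0$ while $i(\gamma_{j},e')\leq i(\alpha,e')$ for $e'\neq e$; thus $N(\gamma_{j})<N(\alpha)$ and the leftover multiplicities still dominate, and the inductive hypothesis applies to each $\gamma_{j}\cdot\prod_{e'}e'^{\,m_{e'}-[e'=e]}$. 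Summing over $j$ gives the polynomial $P_{\alpha}$.

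The hard part will be the geometric bookkeeping concealed in the inductive step: one must check that $N$ genuinely drops and, above all, that resolving never produces a contractible arc component -- for which, unlike trivial and peripheral loops, $\C(\s)$ offers no simplifying relation -- and never an essential arc not isotopic to an edge of $T$. This is where tautness of $\alpha$ relative to $T$ is essential: bigon-freeness keeps the sub-arcs of $\alpha$ between consecutive $e$-crossings essential, and one has to verify that after resolution the diagram stays bigon-free, or can be brought back to taut form (using only $(1')$, $(3')$, $(4')$ and moves allowed in $\C(\s)$) without enlarging its intersection with the other edges. It is probably cleanest to phrase the step as pushing $\alpha$ across one triangle along an outermost arc, which lowers $N$ by one or two; its prototype, one crossing of $\alpha$ with one edge, is literally Penner's Ptolemy relation -- resolving the two diagonals of a quadrilateral yields its two pairs of opposite sides -- so the lemma is the global form of iterating the skein relation against the triangulation.
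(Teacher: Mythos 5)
Your overall strategy is the same as the paper's: resolve $\alpha$ against parallel copies of the edges using the skein relation, induct on the intersection number with the $1$--skeleton, and observe that the terminal components miss every edge, hence lie in ideal triangles. However, the step you yourself single out as the crux --- verifying that resolving ``never produces a contractible arc component, for which $\C(\s)$ offers no simplifying relation'' --- is a genuine flaw in the plan as written, because it is both unachievable and unnecessary. Unachievable: among the states obtained by smoothing all crossings of $\alpha$ with the edge copies, components consisting of an arc based at a single puncture and cutting off an unpunctured disk do in general occur; taut position of $\alpha$ cannot prevent this, since such arcs are created by the reconnections themselves. Unnecessary: such an arc is not an unsimplifiable generator of $\C(\s)$. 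Its two ends meet at the puncture $v$, so the puncture-skein relation (2$'$) applies and evaluates it to $v^{-1}\big(2+(-2)\big)=0$; this is exactly the computation in Lemma~\ref{lemma}(2), and it is what the paper's proof means by ``each component of the final resolution is either $0$ or up to sign an edge''. So these components contribute $0$ to $P_\alpha$, and your insistence on never invoking (2$'$) should be relaxed: it is used only to evaluate terminal trivial arcs, never to resolve a crossing, and since the result is $0$ no $v^{\pm1}$ survives in the coefficients.

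Your remaining worries dissolve for the same structural reason, with no bigon-free bookkeeping after resolution needed. Once a terminal component is disjoint from all edges it lies in a closed ideal triangle, which is a disk: an arc there joining two distinct vertices is, up to curls (each curl only contributing a sign via the framing relation), isotopic to the side joining them; an arc based at one vertex is a trivial arc, hence $0$ as above; a loop is null-homotopic, hence a constant (it cannot encircle a puncture, since every puncture has incident edges). So no ``essential arc not isotopic to an edge'' can appear. Finally, the decrease of $N$ in your inductive step is the content of Figure~\ref{pproduct}(A): after a smoothing, the strand that still meets $e$ runs parallel to $e$ into a shared puncture, and that crossing is removed by Reidemeister Move II$'$, which is part of the defining equivalence on generalized curves; the move takes place in a thin neighborhood of $e$ and of that puncture, so it creates no new intersections and $i(\gamma_j,e')$ does not increase for $e'\neq e$. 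With these corrections your induction closes and coincides with the paper's argument.
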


\begin{figure}[htbp]\centering
\includegraphics[width=11cm]{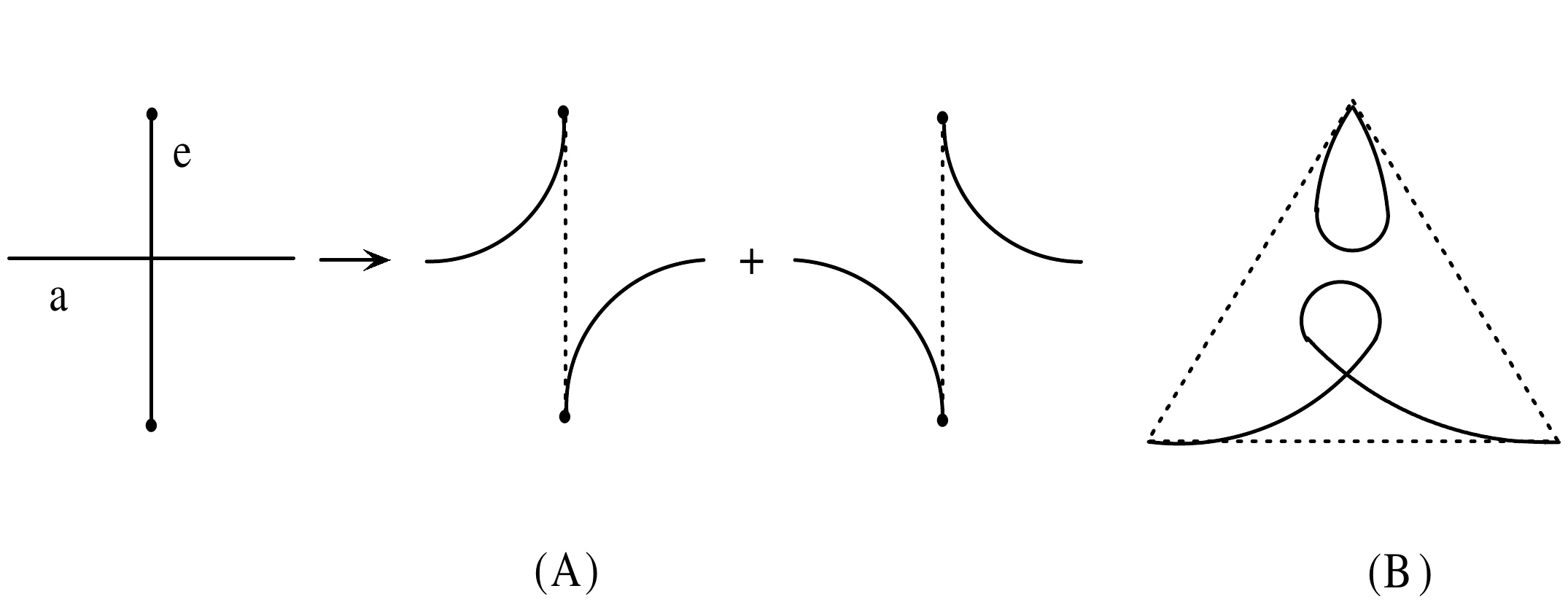}\\
\caption{}\label{pproduct}
\end{figure}

\begin{proof} Let $e\in E$ such that $\alpha\cap e\neq\emptyset$ and $p\in \alpha\cap e$. As in (A) of Figure \ref{pproduct}, each resolution of $\alpha\cdot e$ at $p$ has less intersection number with $e$ than $\alpha$ does. Resolving the product $\alpha\prod_{e\in E}e^{i(\alpha,e)}$ at each point of intersection $p\in\alpha\cap(\bigcup_{e\in E} e)\cap \s$, we see that each component of the final resolution has no intersection with each edge $e\in E$ in the surface, hence must lie in a triangle in $T$. Since a triangle is contractible, each component of the final resolution is either $0$ or up to sign an edge $e\in E$\,(as in (B) of Figure \ref{pproduct}). 
\end{proof}

As a direct consequence of Theorem \ref{main'} and Lemma \ref{product}, we have the following
\begin{proposition}\label{Laurent}
Let $T$ be an ideal triangulation of a punctured surface $\s$ endowed with a decorated hyperbolic metric, and $E$ be its set of edges. Then the $\lambda$-length $\lambda(\alpha)$ of any generalized curve $\alpha$ on $\s$  is a Laurent polynomial in $\{\lambda(e)\ |\ e\in E\}$.
\end{proposition}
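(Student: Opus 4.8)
The plan is to combine Lemma~\ref{product}, which records a polynomial identity in the purely algebraic object $\C(\s)$, with Theorem~\ref{main'}, which pushes that identity forward to an honest identity among $\lambda$--lengths via the algebra homomorphism $\Phi$.

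Fix an enumeration $E=\{e_1,\dots,e_N\}$ of the edges of $T$. By Lemma~\ref{product} there is a polynomial $P_\alpha$ in $N$ variables, depending only on $\alpha$ and $T$, such that
\[
\alpha\cdot\prod_{j=1}^{N}e_j^{\,i(\alpha,e_j)}=P_\alpha(e_1,\dots,e_N)
\]
in $\C(\s)$; note that no puncture variables $v^{\pm1}$ appear on the right, since in the proof of Lemma~\ref{product} one resolves only the crossings of $\alpha$ with the edges that lie in the interior of $\s$, so the puncture--skein relation is never used. Now apply $\Phi$. Each edge $e_j$ is an embedded arc, hence has curling number $c(e_j)=0$, so $\Phi(e_j)=\lambda(e_j)$; and $\Phi(\alpha)=(-1)^{c(\alpha)}\lambda(\alpha)$ by definition. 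Since $\Phi$ is a homomorphism of commutative algebras (Theorem~\ref{main'}) we obtain the identity in $C^{\infty}(\T^d(\s))$
\[
(-1)^{c(\alpha)}\,\lambda(\alpha)\cdot\prod_{j=1}^{N}\lambda(e_j)^{\,i(\alpha,e_j)}=P_\alpha\big(\lambda(e_1),\dots,\lambda(e_N)\big).
\]

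To finish, observe that each $\lambda(e_j)=e^{l(e_j)/2}$ is a strictly positive, hence invertible, element of $C^{\infty}(\T^d(\s))$, so we may divide to get
\[
\lambda(\alpha)=(-1)^{c(\alpha)}\,\frac{P_\alpha\big(\lambda(e_1),\dots,\lambda(e_N)\big)}{\prod_{j=1}^{N}\lambda(e_j)^{\,i(\alpha,e_j)}}.
\]
The quotient of a polynomial by a single monomial in the $\lambda(e_j)$ is a finite sum of Laurent monomials, so the right--hand side is a Laurent polynomial in $\{\lambda(e)\mid e\in E\}$, which is the claim. (Recall that by Penner's theorem, quoted in the introduction, the family $\{\lambda(e)\}_{e\in E}$ is a global coordinate system on $\T^d(\s)$, so the statement has its expected meaning.)

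The argument is short because the substance is already in place: Lemma~\ref{product} supplies the combinatorial identity and Theorem~\ref{main'} supplies the homomorphism. The only points worth checking are that $P_\alpha$ is free of puncture variables, that edges have vanishing curling number, and that $\prod_{j}\lambda(e_j)^{i(\alpha,e_j)}$ is invertible as a smooth function; none of these presents a genuine obstacle. The noteworthy feature is that this covers arbitrary generalized curves, so in particular it exhibits every generalized trace function as a Laurent polynomial in the edge $\lambda$--lengths --- the fact used in Section~\ref{Poissonalg}.
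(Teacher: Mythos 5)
Your proposal is correct and is exactly the paper's argument: the paper states Proposition~\ref{Laurent} as a direct consequence of Lemma~\ref{product} and Theorem~\ref{main'}, i.e.\ one applies the homomorphism $\Phi$ to the identity $\alpha\cdot\prod_{e\in E}e^{i(\alpha,e)}=P_\alpha$ and divides by the nonvanishing product $\prod_{e\in E}\lambda(e)^{i(\alpha,e)}$, which is precisely the identity $P_{\alpha}(\lambda(e))=(-1)^{c(\alpha)}\lambda(\alpha)\prod_{e\in E}\lambda(e)^{i(\alpha,e)}$ used later in the proof of Theorem~\ref{main}. Your added checks (no puncture variables in $P_\alpha$, $c(e)=0$ for edges, invertibility of $\lambda(e)$) are accurate and only make explicit what the paper leaves implicit.
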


\begin{remark} In Appendix~\ref{B} we give explicit formulae for the generalized trace $\lambda(\alpha)$ in terms of the $\lambda$--lengths associated to the edges of an ideal triangulation, $\alpha $ being either a loop or an arc.
\end{remark}

\begin{proof}[Proof of Theorem \ref{main}] For the Poisson structures, we let $T$ be a triangulation of $\s$ with a set of edges $E$. If $e$ and $e'$ are two edges in $E$ each having exactly one end meeting at a puncture $v$, and $x$ and $y$ are the resolutions of $e$ and $e'$ at $v$, then $\{e,e'\}=\frac{1}{4}v^{-1}(x-y)$. By Theorem \ref{main'} and Lemma \ref{3.4}\,(2), we have $\Phi(\{e,e'\})=\frac{1}{4r(v)}(e^{\frac{l(x)}{2}}-e^{\frac{l(y)}{2}})=\frac{1}{4}\frac{\theta'-\theta}{r(v)}e^{\frac{l(e)}{2}}e^{\frac{l(e')}{2}}.$ We also have that $\Pi_{WP}(e^{\frac{l(e)}{2}},e^{\frac{l(e')}{2}})=\frac{1}{16}\frac{\theta'-\theta}{r(v)}e^{\frac{l(e)}{2}}e^{\frac{l(e')}{2}}.$ If either edge has more than one end meeting at $v$ the same computation holds, replacing $x$ and $y$ by the corresponding sums of resolutions and taking the sums of their $\lambda$--lengths instead (see Remark~\ref{multiple} following Lemma~\ref{3.4}). If $e$ and $e'$ are two disjoint edges in $E$, then $\Phi(\{e,e'\})=4\Pi_{WP}(e^{\frac{l(e)}{2}},e^{\frac{l(e')}{2}})=0$. Therefore, we have that $\Phi(\{e,e'\})=4\Pi_{WP}(\lambda(e),\lambda(e'))$ for all pairs of edges in $E$. Now for each generalized curve $\alpha$, by lemma \ref{product}, we have $\alpha\prod_{e\in E}e^{i(\alpha,e)}=P_{\alpha}$ for some polynomial $P_{\alpha}$ with variables in $E$. Since $\{,\}$ is a Poisson bracket, we have $\{\alpha\prod_{e\in E}e^{i(\alpha,e)},e_0\}=\prod_{e\in E}e^{i(\alpha,e)}\{\alpha,e_0\}+\sum_{e'\in E}\alpha\prod_{e\neq e'}e^{i(\alpha,e)}\{e',e_0\}$ for each edge $e_0$ in $E$, from which we see that $\prod_{e\in E}e^{i(\alpha,e)}\{\alpha,e_0\}=\{P_{\alpha},e_0\}-Q$ for some polynomial $Q$ in $\alpha$, $e$ and $\{e,e_0\}$ in which the degrees of $\alpha$ and $\{e,e_0\}$ are equal to $1$. Since $\Phi$ is a $\mathbb{C}$--algebra homomorphism and $\Pi_{WP}$ is a bi-vector field, we have $$P_{\alpha}(\lambda(e))=(-1)^{c(\alpha)}\lambda(\alpha)\prod_{e\in E}\lambda(e)^{i(\alpha,e)},$$ and
\begin{equation*}
\begin{split}
&\Pi_{WP}\big((-1)^{c(\alpha)}\lambda(\alpha)\prod_{e\in E}\lambda(e)^{i(\alpha,e)},\lambda(e_0)\big)\\
=&\prod_{e\in E}\lambda(e)^{i(\alpha,e)}\Pi_{WP}\big((-1)^{c(\alpha)}\lambda(\alpha),\lambda(e_0)\big)+(-1)^{c(\alpha)}Q_{\lambda},
\end{split}
\end{equation*}
where $Q_{\lambda}$ is the value of $Q$ at $\lambda(\alpha)$, $\lambda(e)$ and $\Pi_{WP}(\lambda(e),\lambda(e_0))$.
As a consequence, since $\lambda(e)\neq0$ for each $e\in E$, we have 
\begin{equation*}
\begin{split}
\Phi(\{\alpha,e_0\})&=\frac{\Phi(\{P_{\alpha},e_0\})-\Phi(Q)}{\prod_{e\in E}\Phi(e)^{i(\alpha,e)}}\\
&=\frac{4\Pi_{WP}\big(P_{\alpha}(\lambda(e)),\lambda(e_0)\big)-(-1)^{c(\alpha)}4Q_{\lambda}}{\prod_{e\in E}\lambda(e)^{i(\alpha,e)}}\\
&=4\Pi_{WP}\big((-1)^{c(\alpha)}\lambda(\alpha),\lambda(e_0)\big).
\end{split}
\end{equation*}
For two generalized curves $\alpha$ an $\beta$, we let $\alpha\prod _{e\in E}e^{i(\alpha,e)}=P_{\alpha}$ and $\beta\prod_{e\in E}e^{i(\beta,e)}=P_{\beta}$ as in Lemma \ref{product}. Then we have $\prod_{e\in E}e^{i(\alpha,e)+i(\beta,e)}\{\alpha,\beta\}=\{P_{\alpha},P_{\beta}\}-R$, where $R$ is a polynomial in $\alpha$, $\beta$, $e$, $\{\alpha,e\}$, $\{e,\beta\}$ and $\{e,e'\}$ such that the degrees of $\alpha$, $\beta$, $\{\alpha,e\}$, $\{e,\beta\}$ and $\{e,e'\}$ are all equal to $1$. Therefore, we have $\Phi(R)=4R_{\lambda}$, where $R_{\lambda}$ is the value of $R$ at $\lambda(\alpha)$, $\lambda(\beta)$, $\lambda(e)$, $\Pi_{WP}(\lambda(\alpha)$, $\lambda(e))$, $\Pi_{WP}(\lambda(e),\lambda(\beta))$ and $\Pi_{WP}(\lambda(e),\lambda(e'))$, and 
\begin{equation*}
\begin{split}
\Phi(\{\alpha,\beta\})&=\frac{\Phi(\{P_{\alpha},P_{\beta}\})-\Phi(R)}{\prod_{e\in E}\Phi(e)^{i(\alpha,e)+i(\beta,e)}}\\
&=\frac{4\Pi_{WP}\big(P_{\alpha}(\lambda(e)),P_{\beta}(\lambda(e))\big)-(-1)^{c(\alpha)+c(\beta)}4R_{\lambda}}{\prod_{e\in E}\lambda(e)^{i(\alpha,e)+i(\beta,e)}}\\
&=4\Pi_{WP}\big((-1)^{c(\alpha)}\lambda(\alpha),(-1)^{c(\beta)}\lambda(\beta)\big).
\end{split}
\end{equation*}
Let $\pi\colon \T^d(\s)\rightarrow\mathbb{R}_{>0}^V$ be the projection onto the fiber.  By Mondello\,\cite{Mondello}, the kernel of $\Pi_{WP}$ is the pull-back $\pi^*(T^*\mathbb{R}_{>0}^V)$ of the cotangent space of $\mathbb{R}_{>0}^V$. Since $d(r(v))=\pi^*(dv)\in \pi^*(T^*\mathbb{R}_{>0}^V)$, we have $$\Phi(\{v,\alpha\})=4\Pi_{WP}\big(r(v),(-1)^{c(\alpha)}\lambda(\alpha)\big)=0$$ for each puncture $v$ and each generalized curve $\alpha$.
\end{proof}

As a consequence of Theorem \ref{main}, Wolpert's cosine formula generalizes to the bi-vector field $\Pi_{WP}$ as follows:

\begin{corollary} Let $\theta_p$ be the angle from $\alpha$ to $\beta$ at $p\in \alpha\cap\beta$ in $\s$. If $\alpha$ and $\beta$ are two geodesic arcs, then let $\theta_v$ be the generalized angle from $\alpha$ to $\beta$ and let $\theta'_v$ be the generalized angle from $\beta$ to $\alpha$ at a puncture $v\in \alpha\cap\beta$. We have

$$\Pi_{WP}(l(\alpha),l(\beta))=\frac{1}{2}\sum_{p\in \alpha\cap \beta\cap\s}\cos\theta_p+\frac{1}{4}\sum_{v\in \alpha\cap\beta\cap V}\frac{\theta'_v-\theta_v}{r(v)}.$$

\end{corollary}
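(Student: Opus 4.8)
## Proof proposal for the Corollary

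The plan is to deduce the cosine formula directly from Theorem~\ref{main} by feeding it the right pair of trace functions. First I would note that $\Phi$ sends a geodesic arc $\alpha$ to $\lambda(\alpha) = e^{l(\alpha)/2}$ (the curling number of a geodesic is $0$, so there is no sign), and hence $l(\alpha) = 2\log\lambda(\alpha) = 2\log\Phi(\alpha)$. Since Theorem~\ref{main} asserts that $\Phi$ intertwines the Goldman bracket $\{\ ,\ \}$ on $\C(\s)$ with the Poisson bracket $4\,\Pi_{WP}$ on $C^\infty(\T^d(\s))$, and since $\Pi_{WP}$ is a \emph{bi-vector field} (a derivation in each slot), the chain rule gives
\begin{equation*}
\Pi_{WP}\big(l(\alpha),l(\beta)\big) = \Pi_{WP}\big(2\log\Phi(\alpha),\,2\log\Phi(\beta)\big) = \frac{4}{\Phi(\alpha)\Phi(\beta)}\,\Pi_{WP}\big(\Phi(\alpha),\Phi(\beta)\big) = \frac{1}{\lambda(\alpha)\lambda(\beta)}\,\Phi\big(\{\alpha,\beta\}\big).
\end{equation*}
So the whole computation reduces to evaluating $\Phi$ on the Goldman bracket $\{\alpha,\beta\}$ and dividing by $\lambda(\alpha)\lambda(\beta) = e^{(l(\alpha)+l(\beta))/2}$.

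Next I would expand $\{\alpha,\beta\}$ using its definition: it is $\tfrac12\sum_{p\in\alpha\cap\beta\cap\s}(\alpha_p\beta^+ - \alpha_p\beta^-)$ plus $\tfrac14\sum_{v\in\alpha\cap\beta\cap V}\tfrac1v(\alpha_v\beta^+ - \alpha_v\beta^-)$. Applying the algebra homomorphism $\Phi$ term by term, the interior-crossing contribution at $p$ becomes $\tfrac12\big(\lambda(\alpha_p\beta^+) - \lambda(\alpha_p\beta^-)\big)$ — here I use that the resolutions of two geodesic arcs at an interior point have curling number $0$, which follows as in the proof of Proposition~\ref{prop:induction} (resolving at a single crossing of geodesics produces regularly-isotopic-to-geodesic curves) — and by Lemma~\ref{3.3}(2) this equals $\tfrac12 e^{l(\alpha)/2}e^{l(\beta)/2}\cos\theta_p$. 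Similarly the puncture contribution at $v$ becomes $\tfrac14 \tfrac{1}{r(v)}\big(\lambda(\alpha_v\beta^+) - \lambda(\alpha_v\beta^-)\big)$, which by Lemma~\ref{3.4}(2) (together with Remark~\ref{multiple} if several ends meet at $v$) equals $\tfrac14 \tfrac{1}{r(v)}(\theta'_v-\theta_v)\,e^{l(\alpha)/2}e^{l(\beta)/2}$. Summing and dividing through by $\lambda(\alpha)\lambda(\beta) = e^{(l(\alpha)+l(\beta))/2}$ yields exactly the claimed formula.

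The one point requiring genuine care — and the main obstacle — is bookkeeping the curling numbers and signs so that the $(-1)^{c(\cdot)}$ factors built into $\Phi$ all collapse to $+1$. For interior crossings of two \emph{geodesic} arcs this is clean, but one must check that it persists when $\alpha,\beta$ are only in general position (not geodesic); this is handled exactly by the inductive argument of Proposition~\ref{prop:induction}(b), which already proves the signed identity $(-1)^{c(\alpha)+c(\beta)}\lambda(\alpha)\lambda(\beta) = \sum (-1)^{c(\gamma_i)}\lambda(\gamma_i)$, so $\Phi(\{\alpha,\beta\})$ is computed consistently regardless. A subtlety also arises if an end of $\alpha$ or $\beta$ at $v$ is part of a generalized bigon, but again this is subsumed by Proposition~\ref{prop:puncture}. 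Since the corollary is stated for geodesic arcs, none of these degeneracies actually occur, so the proof is really just the substitution above; I would present it in that streamlined form, invoking Lemmas~\ref{3.3} and~\ref{3.4} and Theorem~\ref{main} and leaving the sign verification to the cited propositions.
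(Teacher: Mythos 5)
Your argument is essentially the paper's own proof: both deduce the formula from Theorem~\ref{main} by applying $\Phi$ to the Goldman bracket, using the derivation property of $\Pi_{WP}$ to pass from $\lambda$-lengths to length functions, and then converting the differences $\lambda(\alpha_x\beta^+)-\lambda(\alpha_x\beta^-)$ via part (2) of the length lemmas. The one discrepancy is scope. The corollary is framed as a generalization of Wolpert's cosine formula, so $\alpha$ and $\beta$ are allowed to be geodesic loops as well as arcs (the arc hypothesis is only needed to make sense of the puncture terms), and the paper's proof accordingly divides by $\lambda'(\alpha)\lambda'(\beta)$, where $\lambda'(\alpha)=\sinh\frac{l(\alpha)}{2}$ for a loop and $\lambda'(\alpha)=\frac{1}{2}e^{l(\alpha)/2}$ for an arc, invoking Lemmas~\ref{3.1}(2) and \ref{3.2}(2) in addition to \ref{3.3}(2) and \ref{3.4}(2). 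Your simplification $l(\alpha)=2\log\Phi(\alpha)$ is valid only for arcs: for a loop $\Phi(\alpha)=2\cosh\frac{l(\alpha)}{2}$, so the chain-rule factor is $1/\sinh\frac{l(\alpha)}{2}$ rather than $2/\lambda(\alpha)$, and keeping the latter would leave a spurious factor $\tanh\frac{l(\alpha)}{2}\tanh\frac{l(\beta)}{2}$ in the interior terms. If you intend the statement only for geodesic arcs, your proof is complete as written; to recover the corollary in the generality the paper proves it, replace $1/\big(\lambda(\alpha)\lambda(\beta)\big)$ by $1/\big(4\lambda'(\alpha)\lambda'(\beta)\big)$ and quote the corresponding lemmas for the loop--loop and loop--arc crossings. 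Your sign bookkeeping is otherwise fine: geodesics have curling number $0$, and since two distinct geodesics bound no bigon, the resolutions at a crossing carry no curls, so all the $(-1)^{c(\cdot)}$ factors are indeed $+1$.
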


\begin{proof} We let $\lambda'(\alpha)=\sinh\frac{l(\alpha)}{2}$ if $\alpha$ is a loop on $\s$, and let $\lambda'(\alpha)=\frac{1}{2}e^{\frac{l(\alpha)}{2}}$  if $\alpha$ is an arc on $\s$. By Theorem \ref{main} and (2) of Lemma \ref{3.1}\,-\,\ref{3.4}, we have 
 \begin{equation*}
 \begin{split}
&\Pi_{WP}(l(\alpha),l(\beta))\\
=\,&\frac{1}{\lambda'(\alpha)\lambda'(\beta)}\Pi_{WP}(\lambda(\alpha),\lambda(\beta))\\
 =\,&\frac{1}{4\lambda'(\alpha)\lambda'(\beta)}\Phi(\{\alpha,\beta\})\\
 =\,&\frac{1}{4\lambda'(\alpha)\lambda'(\beta)}\Phi\big(\frac{1}{2}\sum_{p\in\alpha\cap\beta\cap\s}(\alpha_p\beta^+-\alpha_p\beta^-)+\frac{1}{4}\sum_{v\in\alpha\cap\beta\cap V}\frac{1}{v}(\alpha_v\beta^+-\alpha_v\beta^-)\big)\\
 =\,&\frac{1}{8}\sum_{p\in\alpha\cap\beta\cap\s}\frac{\lambda(\alpha_p\beta^+)-\lambda(\alpha_p\beta^-)}{\lambda'(\alpha)\lambda'(\beta)}+\frac{1}{16}\sum_{v\in\alpha\cap\beta\cap V}\frac{1}{r(v)}\frac{\lambda(\alpha_v\beta^+)-\lambda(\alpha_v\beta^-)}{\lambda'(\alpha)\lambda'(\beta)}\\
 =\,&\frac{1}{2}\sum_{p\in \alpha\cap \beta\cap\s}\cos\theta_p+\frac{1}{4}\sum_{v\in \alpha\cap\beta\cap V}\frac{\theta'_v-\theta_v}{r(v)}.
\end{split}
\end{equation*}
where if $\alpha_v\beta^\pm$ corresponds to a sum of resolutions seen as an element in $\C(\s)$, then $\lambda(\alpha_v\beta^\pm)$ consists of the sum of their $\lambda$--lengths.

\end{proof}


\appendix
\section{Cosine and sine laws of twisted generalized triangles}\label{A}
\begin{enumerate}
\item Type $(1,1,-1)$:
\[\includegraphics[width=3.5cm]{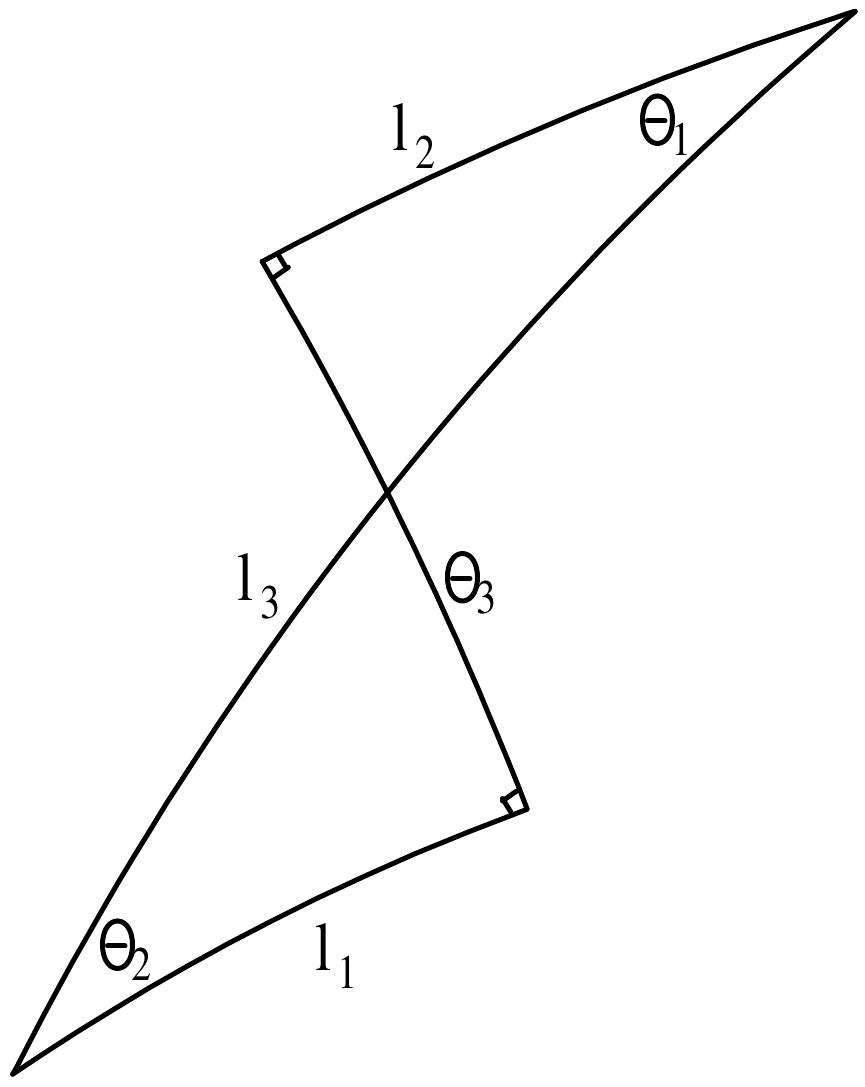}\]
\begin{equation*}
\begin{split}
&\sinh l_1=\frac{-\cos\theta_1+\cos\theta_2\cosh\theta_3}{\sinh\theta_2\sin\theta_3}\quad\quad\cos\theta_1=\frac{\sinh l_1+\sinh l_2\cosh l_3}{\cosh l_2\sinh l_3}\\
&\sinh l_2=\frac{-\cos\theta_2+\cos\theta_1\cosh\theta_3}{\sin\theta_1\sinh\theta_3}\quad\quad\cos\theta_2=\frac{\sinh l_2+\sinh l_1\cosh l_3}{\cosh l_1\sinh l_3}\\
&\cosh l_3=\frac{\cosh\theta_3-\cos\theta_1\cos\theta_2}{\sin\theta_1\sin\theta_2}\quad\quad\quad\cosh\theta_3=\frac{\cosh l_3-\sinh l_1\sinh l_2}{\cosh l_1\cosh l_2}\\
&\frac{\sin\theta_1}{\cosh l_1}=\frac{\sin\theta_2}{\cosh l_2}=\frac{\sinh\theta_3}{\sinh l_3}
\end{split}
\end{equation*}

\item Type $(1,0,-1)$:
\[\includegraphics[width=3.7cm]{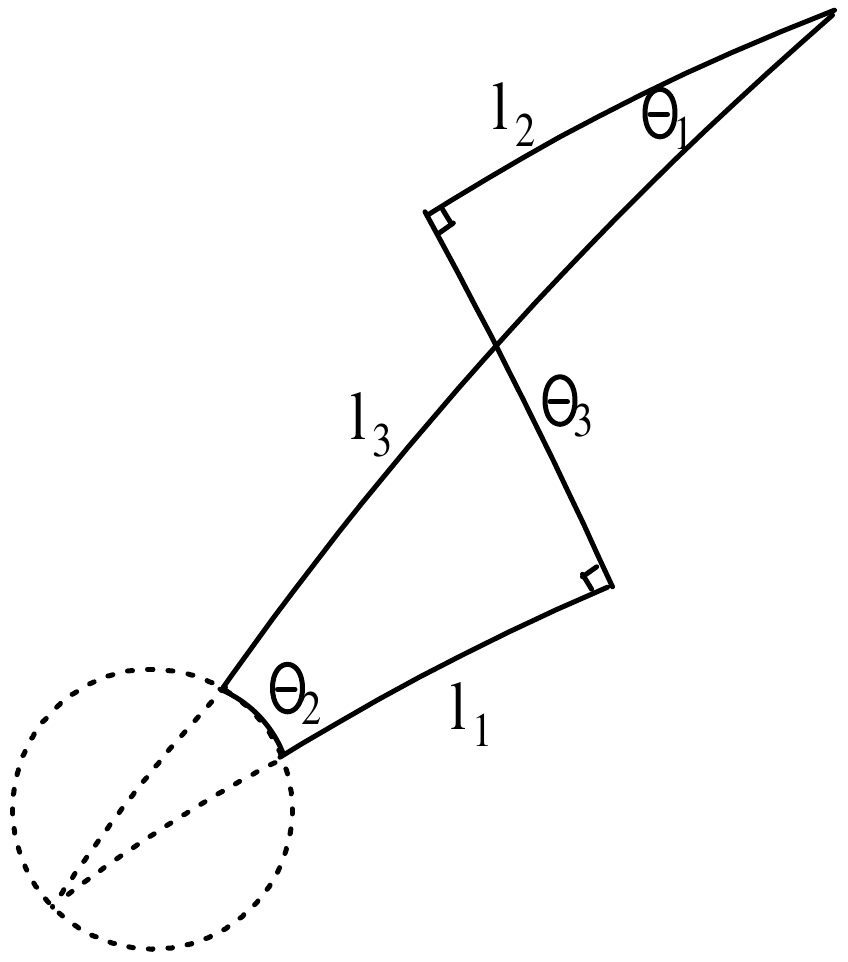}\]
\begin{equation*}
\begin{split}
&e^{l_1}=\frac{-\cos\theta_1+\cosh\theta_3}{\theta_2\sinh\theta_3}\quad\quad\quad\quad\cos\theta_1=\frac{e^{l_1}+e^{l_3}\sinh l_2}{e^{l_3}\cosh l_2}\\
&\sinh l_2=\frac{-1+\cos\theta_1\cosh\theta_3}{\sin\theta_1\sinh\theta_3}\quad\quad\theta_2^2=\frac{-\sinh l_2+\sinh(l_3-l_1)}{\frac{e^{l_1+l_3}}{2}}\\
&e^{l_3}=\frac{\cosh\theta_3-\cos\theta_1}{\theta_2\sin\theta_1}\quad\quad\quad\quad\quad\cosh\theta_3=\frac{e^{l_3}-e^{l_1}\sinh l_2}{e^{l_1}\cosh l_2}\\
&\frac{\sin\theta_1}{e^{l_1}}=\frac{\theta_2}{\cosh l_2}=\frac{\sinh\theta_3}{e^{l_3}}
\end{split}
\end{equation*}

\item Type $(1,-1,-1)$: 
\[\includegraphics[width=3.5cm]{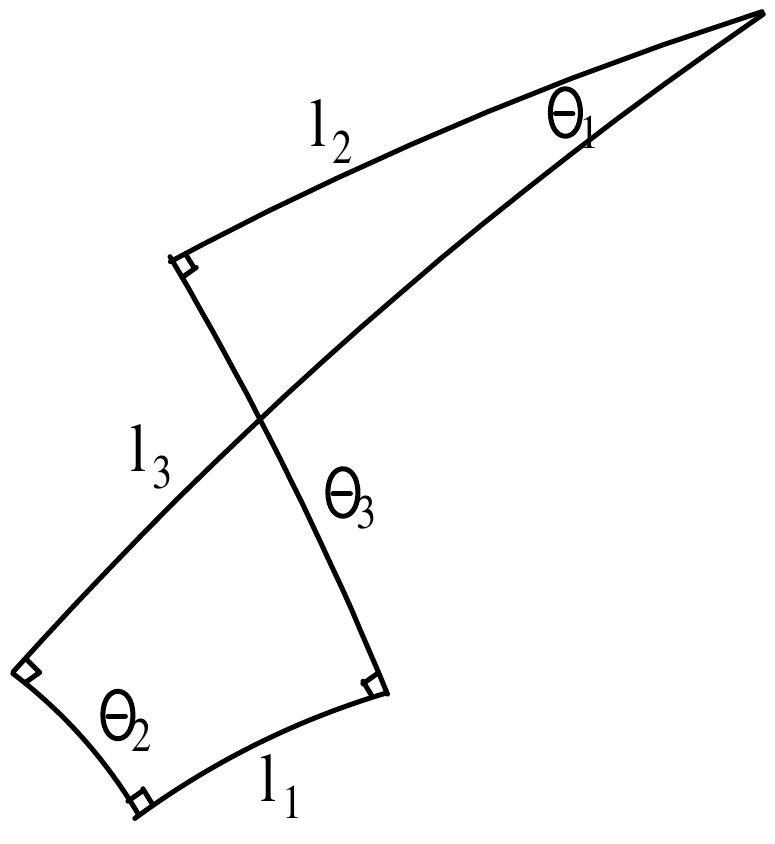}\]
\begin{equation*}
\begin{split}
&\cosh l_1=\frac{-\cos\theta_1+\cosh\theta_2\cosh\theta_3}{\sinh\theta_2\sinh\theta_3}\quad\quad\cos\theta_1=\frac{\cosh l_1+\sinh l_2\sinh l_3 }{\cosh l_2\cosh l_3}\\
&\sinh l_2=\frac{-\cosh\theta_2+\cos\theta_1\cosh\theta_3}{\sin\theta_1\sinh\theta_3}\quad\quad\cosh\theta_2=\frac{-\sinh l_2+\cosh l_1\sinh l_3}{\sinh l_1\cosh l_3}\\
&\sinh l_3=\frac{\cosh\theta_3-\cos\theta_1\cosh\theta_2}{\sin\theta_1\sinh\theta_2}\quad\quad\quad\cosh\theta_3=\frac{\sinh l_3-\cosh l_1\sinh l_2}{\sinh l_1\cosh l_2}\\
&\frac{\sin\theta_1}{\sinh l_1}=\frac{\sinh\theta _2}{\cosh l_2}=\frac{\sinh\theta _3}{\cosh l_3}
\end{split}
\end{equation*}

\item Type $(0,0,-1)$:
\[\includegraphics[width=4cm]{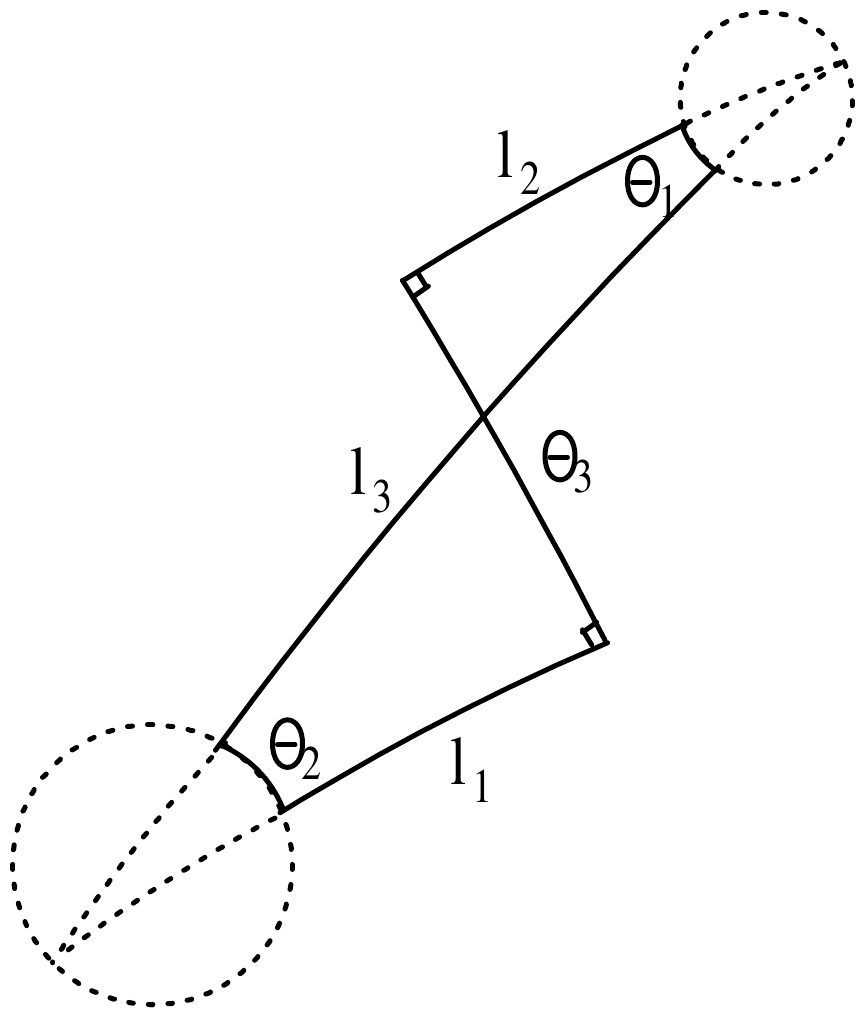}\]
\begin{equation*}
\begin{split}
&e^{l_1}=\frac{-1+\cosh\theta_3}{\theta_2\sinh\theta_3}\quad\quad\quad\ \theta_1^2=\frac{-e^{l_1}+e^{l_3-l_2}}{e^{l_2+l_3}}\\
&e^{l_2}=\frac{-1+\cosh\theta_3}{\theta_1\sinh\theta_3}\quad\quad\quad\ \theta_2^2=\frac{-e^{l_2}+e^{l_3-l_1}}{e^{l_1+l_3}}\\
&e^{l_3}=\frac{\cosh\theta_3-1}{2\theta_1\theta_2}\quad\quad\quad\quad\cosh^2\frac{\theta_3}{2}=e^{l_3-l_1-l_2}\\
&\frac{\theta_1}{e^{l_1}}=\frac{\theta_2}{e^{l_2}}=\frac{\sinh\theta_3}{2e^{l_3}}
\end{split}
\end{equation*}

\item Type $(0,-1,-1)$:
\[\includegraphics[width=3.2cm]{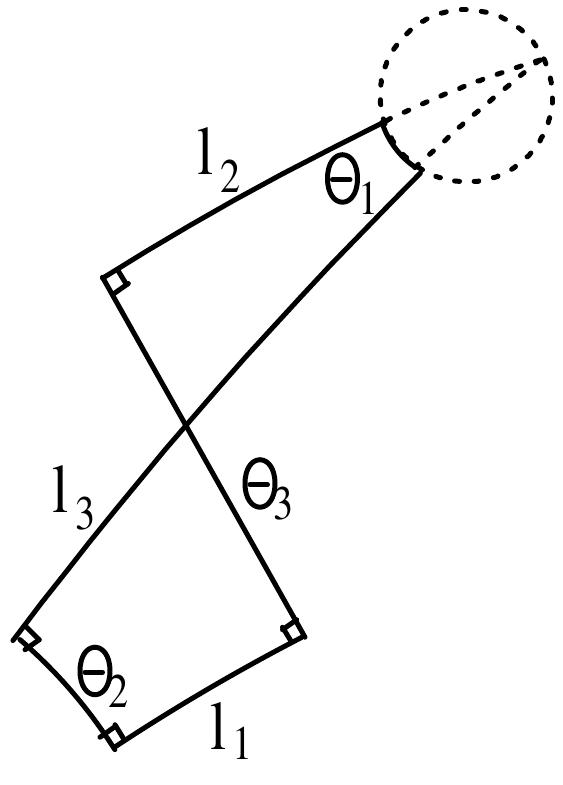}\]
\begin{equation*}
\begin{split}
&\cosh l_1=\frac{-1+\cosh\theta_2\cosh\theta_3}{\sinh\theta_2\sinh\theta_3}\quad\quad\ \theta_1^2=\frac{-\cosh l_1+\cosh(l_3-l_2)}{\frac{e^{l_2+l_3}}{2}}\\
&e^{l_2}=\frac{-\cosh\theta_2+\cosh\theta_3}{\theta_1\sinh\theta_3}\quad\quad\quad\quad\ \ \cosh\theta_2=\frac{-e^{l_2}+e^{l_3}\cosh l_1}{e^{l_3}\sinh l_1}\\
&e^{l_3}=\frac{\cosh\theta_3-\cosh\theta_2}{\theta_1\sinh\theta_2}\quad\quad\quad\quad\quad\ \  \cosh\theta_3=\frac{e^{l_3}-e^{l_2}\cosh l_1}{e^{l_2}\sinh l_1}\\
&\frac{\theta_1}{\sinh l_1}=\frac{\sinh\theta_2}{e^{l_2}}=\frac{\sinh\theta_3}{e^{l_3}}
\end{split}
\end{equation*}

\item Type $(-1,-1,-1)$:
\[\includegraphics[width=3.2cm]{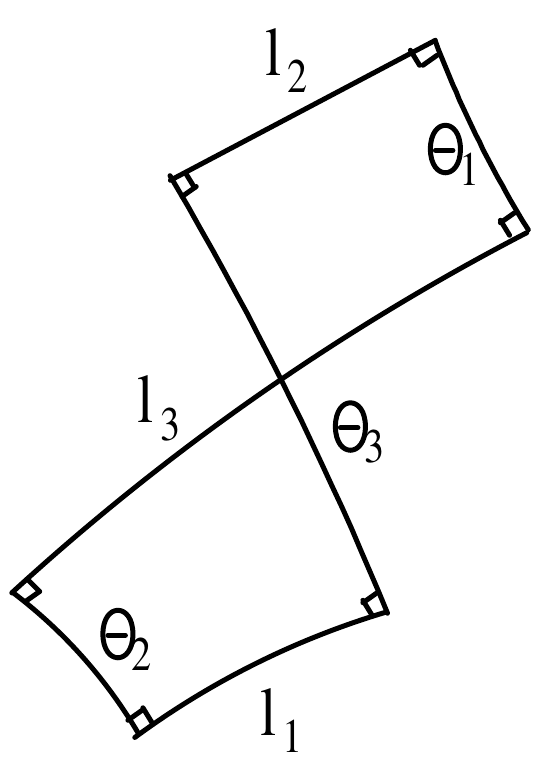}\]
\begin{equation*}
\begin{split}
&\cosh l_1=\frac{-\cosh\theta_1+\cosh\theta_2\cosh\theta_3}{\sinh\theta_2\sinh\theta_3}\quad\quad\cosh\theta_1=\frac{-\cosh l_1+\cosh l_2\cosh l_3}{\sinh l_2\sinh l_3}\\
&\cosh l_2=\frac{-\cosh\theta_2+\cosh\theta_1\cosh\theta_3}{\sinh\theta_1\sinh\theta_3}\quad\quad\cosh\theta_2=\frac{-\cosh l_2+\cosh l_1\cosh l_3}{\sinh l_1\sinh l_3}\\
&\cosh l_3=\frac{\cosh\theta_3-\cosh\theta_1\cosh\theta_2}{\sinh\theta_1\sinh\theta_2}\quad\quad\quad\cosh\theta_3=\frac{\cosh l_3-\cosh l_1\cosh l_2}{\sinh l_1\sinh l_2}\\
&\frac{\sinh\theta_1}{\sinh l_1}=\frac{\sinh\theta_2}{\sinh l_2}=\frac{\sinh\theta_3}{\sinh l_3}
\end{split}
\end{equation*}
\end{enumerate}

\section{Trace functions and $\lambda$--lengths: an explicit formula}\label{B}
Let $\s$ be a surface with at least one puncture and such that $\chi(\s)<0$, and choose a decorated hyperbolic metric $(m,r)$ on $\s$. Fix an ideal triangulation $T$ of $\s$ with triangles $\{\Delta_1,\ldots,\Delta_s\}$ and edges $\{e_1,\ldots,e_t\}$. We let $\alpha$ be a loop or an arc on $\s$ and let $\overline{\alpha}$ be the unique geodesic homotopic to $\alpha$ for the given metric. In addition, we choose an arbitrary orientation for $\overline{\alpha}$. If $\overline{\alpha}$ is a loop, we list the successive triangles that $\overline{\alpha}$ crosses by $\Delta_{i_1}$ through $\Delta_{i_n}$ starting at some arbitrary point inside the first triangle. Similarly we list the edges that $\overline{\alpha}$ crosses $e_{j_1}$ through $e_{j_n}$ in such a way that $e_{j_k}$ is shared by $\Delta_{i_k}$ and $\Delta_{i_{k+1}}$  for $1\leq k\leq n-1$ and $e_{j_n}$ is shared by $\Delta_{i_n}$ and $\Delta_{i_1}$. If $\overline{\alpha}$ is an arc, we list triangles and edges accordingly, from $\Delta_{i_1}$ containing the beginning vertex to $\Delta_{i_{n+1}}$ containing the end vertex. Note that in either case the same triangles and edges can occur several times. We denote by $\lambda_i=\lambda(e_i)$ the $\lambda$-length of each edge for $(m,r)$.

If $\overline{\alpha}$ makes a left turn in $\Delta_{i_k}$ as in (A) of Figure \ref{rl}, then we let 
\begin{equation*} M_{k} = \left(
\begin{array}{cc} \lambda_b & \lambda_r\\ 
0 & \lambda_l\\
\end{array} \right)
\end{equation*}
for the corresponding labels $b$, $l$ and $r$ and if $\overline{\alpha}$ makes a right turn in $\Delta_{i_k}$ as in (B) of Figure \ref{rl}, then we let 
\begin{equation*} M_{k} = \left(
\begin{array}{cc} \lambda_r & 0\\ 
\lambda_l & \lambda_b\\
\end{array} \right).
\end{equation*}
In addition, If $\alpha$ is an arc starting at $\Delta_{i_1}$ and ending at $\Delta_{i_{n+1}}$ as in ({C}) and (D) of Figure \ref{rl}, then we let 
\begin{equation*} M_{1}=(\ \lambda_r\ \ \lambda_l\ ) \text{\quad and\quad } M_{n+1} = \left(
\begin{array}{c} \lambda_r \\ 
\lambda_l \\
\end{array} \right).
\end{equation*}

\begin{figure}[htbp]\centering
\includegraphics[width=12cm]{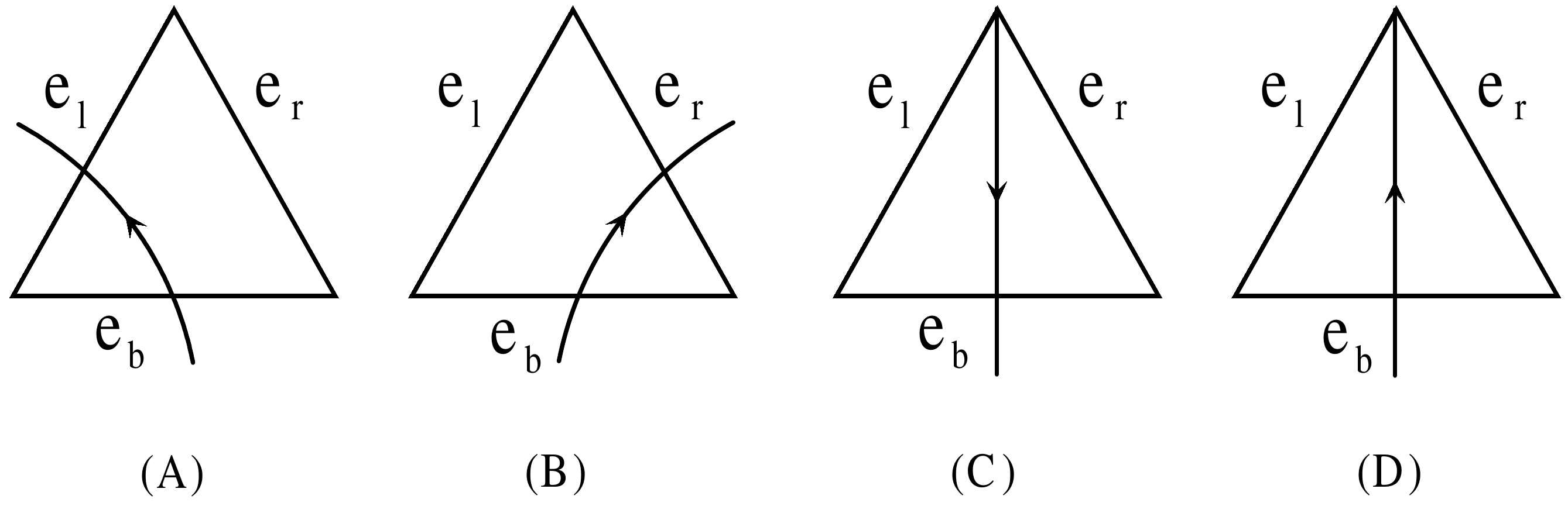}\\
\caption{intersections of $\overline{\alpha}$ with ideal triangles}\label{rl}
\end{figure}
Keeping track of the resolutions at the intersections of $\overline{\alpha}\cdot\prod_ke_{j_k}$ in the order given by the orientation of $\overline{\alpha}$, we obtain the following result.
\begin{theorem}
Let $\alpha$ be a generalized curve on a decorated hyperbolic surface $\s$ with a given ideal triangulation $T$, and let $\lambda_{j_k}$ and $M_{k}$ be as above. Then we have
\begin{equation*} 
\lambda(\alpha) = \left\{
\begin{array}{cl}
\frac{\displaystyle{tr(M_{1}\cdots M_{n})}}{\displaystyle{\lambda_{j_1}\cdots\lambda_{j_n}}}, & \text{if } \alpha \text{ is a loop},\\ 
 \\
\frac{\displaystyle{M_{1}\cdots M_{n+1}}}{\displaystyle{\lambda_{j_1}\cdots\lambda_{j_n}}}, & \text{if } \alpha \text{ is an arc}.
\end{array} \right.
  \end{equation*}
\end{theorem}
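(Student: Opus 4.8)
The plan is to compute the product $\overline{\alpha}\cdot\prod_k e_{j_k}$ directly in $\C(\s)$ by resolving, in the cyclic order dictated by the orientation of $\overline\alpha$, each of the $n$ crossings of $\overline\alpha$ with the edges of $T$, and to recognize the resulting bookkeeping as a product of $2\times2$ matrices. First I would reduce to such a computation: by Lemma~\ref{product} the product $\overline\alpha\cdot\prod_k e_{j_k}$ equals a polynomial $P_\alpha$ in the variables $E$; since $\overline\alpha$ and every edge $e_i$ are geodesics, hence have curling number $0$, the homomorphism $\Phi$ of Theorem~\ref{main'} satisfies $\Phi(\overline\alpha)=(-1)^{c(\overline\alpha)}\lambda(\overline\alpha)=\lambda(\alpha)$ and $\Phi(e_i)=\lambda_i$, so applying $\Phi$ gives $\lambda(\alpha)\prod_k\lambda_{j_k}=P_\alpha(\lambda_1,\dots,\lambda_t)$ with $\prod_k\lambda_{j_k}=\prod_{e\in E}\lambda(e)^{i(\alpha,e)}\neq0$. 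Thus it is enough to show that $P_\alpha$ evaluated at the $\lambda_i$ equals $tr(M_1\cdots M_n)$ when $\alpha$ is a loop, and equals the scalar $M_1\cdots M_{n+1}$ when $\alpha$ is an arc.

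Next I would set up the transfer-matrix bookkeeping. Cut $\overline\alpha$ at its crossing points into geodesic segments $s_k\subset\overline\Delta_{i_k}$; each $s_k$ is simple and joins two \emph{distinct} sides of $\overline\Delta_{i_k}$, since two geodesics in $\mathbb H^2$ cannot bound a bigon, so in $\Delta_{i_k}$ the curve $\overline\alpha$ either makes a left turn or a right turn. When one resolves the crossing on an edge $e_{j_k}$ using the skein relation (1$'$), the two resolutions join the two sub-arcs of $e_{j_k}$ running from the crossing point to its two endpoints onto the two strands passing through, one into $\Delta_{i_k}$ and one into $\Delta_{i_{k+1}}$; pushing these sub-arcs into the adjacent triangles, after all $n$ resolutions each segment $s_k$ is completed into an arc $\sigma_k\subset\overline\Delta_{i_k}$ with both endpoints at vertices (punctures) of $\Delta_{i_k}$. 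Since $\overline\Delta_{i_k}$ is a disk with precisely its three vertices punctured, $\sigma_k$ has curling number $0$ and, according to the two resolution choices at the bounding edges, is either isotopic to one of the three sides of $\Delta_{i_k}$ (so $\Phi(\sigma_k)$ is the corresponding $\lambda$) or a trivial arc at a corner vertex (so $\Phi(\sigma_k)=0$ by Lemma~\ref{lemma}~(2)); in a self-folded triangle a side may be a loop around a puncture, which is handled by the puncture relation and causes no essential change. Fixing a labelling of the two sub-arcs of each edge that is consistent for the two triangles sharing it, $P_\alpha$ evaluated at the $\lambda_i$ becomes the sum over all labellings of $\prod_k\Phi(\sigma_k)$: for a loop this is exactly $tr(M_1\cdots M_n)$, the trace appearing because the two admissible labellings of the ``closing'' edge $e_{j_n}$ (shared between the last and first triangles) are summed; for an arc the two terminal segments $s_1,s_{n+1}$ are each completed by a single sub-arc into an arc from an endpoint puncture of $\alpha$ to a vertex, producing the row vector $M_1$ and the column vector $M_{n+1}$ and the scalar $M_1\cdots M_{n+1}$.

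The delicate part — and the main obstacle — is the final matching step: choosing the orientation conventions and the labelling of the two sub-arcs of each edge (these are exactly the conventions encoded in Figure~\ref{rl}) so that (a) the labels agree for the two triangles sharing an edge, which is what makes the local contributions compose as an ordinary matrix product and makes the loop case close up as a genuine trace rather than an anti-diagonally twisted pairing; and (b) for each of the two local shapes, the left turn of Figure~\ref{rl}~(A) and the right turn of Figure~\ref{rl}~(B), the four values of $\Phi(\sigma_k)$ land in the entries of $M_k$ precisely as stated — with the $0$ in position $(2,1)$ (resp.\ $(1,2)$) coming from the combination that rounds the corner at the vertex $b\cap l$ (resp.\ $b\cap r$), and the off-diagonal $\lambda_r$ (resp.\ $\lambda_l$) from the combination yielding the side of $\Delta_{i_k}$ opposite the turning vertex. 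Once these finitely many local checks are carried out, together with the analogous checks for $M_1$ and $M_{n+1}$ in the arc case and for the degenerate loop-around-a-puncture terms, the theorem follows by assembling the local contributions into the single trace (or matrix product) and dividing by $\lambda_{j_1}\cdots\lambda_{j_n}$.
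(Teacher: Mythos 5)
Your overall route is the one the paper intends (its own ``proof'' is essentially the single sentence preceding the statement): reduce via Lemma~\ref{product} and the homomorphism of Theorem~\ref{main'} to evaluating the resolution of $\overline\alpha\cdot\prod_k e_{j_k}$, and organize that resolution, in the order given by the orientation of $\overline\alpha$, as a transfer--matrix sum. Your local analysis inside one triangle is also the right heart of the argument: each segment $s_k$ joins two distinct sides (no bigon between two geodesics), the four ways of capping its ends by sub-arcs of the crossed edges give the corner arc, which vanishes by Lemma~\ref{lemma}\,(2), and arcs isotopic in the triangle to the three sides, which accounts for the single zero entry and the three $\lambda$'s in each $M_k$, with the trace (resp.\ the row/column vectors) closing up the loop (resp.\ arc) case.

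There is, however, a genuine gap in the combinatorial claim that drives the computation: you assert that resolving ``each of the $n$ crossings of $\overline\alpha$ with the edges'' terminates the expansion, each term being the disjoint union of the capped arcs $\sigma_k$. This is literally true only when $\overline\alpha$ is embedded and meets every edge at most once. In general the product contains $i(\alpha,e)$ parallel copies of each edge $e$, and \emph{each} copy meets the geodesic $\overline\alpha$ at $i(\alpha,e)$ points, so there are $\sum_e i(\alpha,e)^2>n$ crossings; moreover the self-crossings of a non-simple $\overline\alpha$ persist after your $n$ resolutions. The statement can be repaired, but it needs to be said how: after the $n$ chosen resolutions every term \emph{is} the union of the arcs $\sigma_k$, and since the product in $\C(\s)$ is simply union, crossings between \emph{different} components $\sigma_k$, $\sigma_{k'}$ (which is where the self-crossings of $\overline\alpha$ and most of the extra copy-crossings end up) require no further resolution --- one just evaluates $\Phi$ of the product $\prod_k[\sigma_k]$. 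What does require care is that each individual $\sigma_k$ be embedded and curl-free, so that $\Phi([\sigma_k])=+\lambda$ of the corresponding edge with no sign: when a triangle is traversed between two of its sides that are glued to the same edge of $T$, a careless choice of the parallel push-offs makes a capping sub-arc cross $s_k$ itself, putting a curl on $\sigma_k$ and a spurious $(-1)$ into the matrix entry. One must either argue that the push-offs can always be chosen so that the leftover crossings occur only between distinct components, or restructure the whole argument as an induction on $n$ using Propositions~\ref{prop:induction} and \ref{prop:puncture} applied one crossing at a time. As written, your proposal (like the paper's one-line proof) silently assumes the simplest configuration.
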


In the case of a loop, one should compare this formula with the expression of trace functions in terms of shear coordinates which can be found in \cite{BW2} Lemma 3.

\noindent
\noindent
Julien Roger\\
Department of Mathematics, Rutgers University\\
New Brunswick, NJ 08854, USA\\
(juroger@math.rutgers.edu)
\\

\noindent 
Tian Yang\\
Department of Mathematics, Rutgers University\\
New Brunswick, NJ 08854, USA\\
(tianyang@math.rutgers.edu)


\begin{thebibliography}{xxxxx}

\bibitem[AU11]{AU} Andersen, J\o rgen E.; Ueno, Kenji, {\em Construction of the Reshetikhin-Turaev TQFT from conformal field theory}. Preprint, {\tt arXiv:1110.5027}.

\bibitem[BZ05]{BZ} Berenstein, Arkady; Zelevinsky, Andrei, {\em Quantum cluster algebras}, Adv. Math. 195 (2005), no. 2, 405--455.

\bibitem[BHMV95]{BHMV} Blanchet, Christian; Habegger, Nathan; Masbaum, Gregor; Vogel, Pierre, {\em Topological quantum field theories derived from the Kauffman bracket}. Topology 34 (1995), no. 4, 883--927.

\bibitem[Bo96]{Bo} Bonahon, Francis, {\em Shearing hyperbolic surfaces,
bending pleated surfaces and Thurston's symplectic form}.  Ann. Fac.
Sci. Toulouse Math. (6)  5  (1996), 233--297.

\bibitem[BW10]{BW1} --------, {\em Kauffman brackets, character varieties and triangulations of surfaces}. Preprint, {\tt arXiv:1009.0084}.

\bibitem[BW11]{BW2} Bonahon, Francis; Wong, Helen, {\em Quantum traces for representations of surface groups in $SL_2(\mathbb{C})$}, Geom. Topol. 15  (2011),  no. 3, 1569--1615.

\bibitem[Bu97]{Bu} Bullock, Doug, {\em Rings of $SL_2(\mathbf{C})$-characters and the Kauffman bracket skein module}. Comment. Math. Helv. 72 (1997) no. 4, 521--542.

\bibitem[BFK98]{BFK2} Bullock, Doug; Frohman, Charles; Kania-Bartoszy\'nska, Joanna, {\em Topological interpretations of lattice gauge field theory}.  Comm. Math. Phys. 198  (1998),  no. 1, 47--81.

\bibitem[BFK99]{BFK} Bullock, Doug; Frohman, Charles; Kania-Bartoszy\'nska, Joanna, {\em Understanding the Kauffman bracket skein module}. J. Knot Theory Ramifications 8 (1999), no. 3, 265--277.

\bibitem[CM09]{CM} Charles, Laurent; March\'e, Julien, {\em Multicurves and regular functions on the representation variety of a surface in $SU(2)$}. Preprint, {\tt arXiv:0901.3064}.

\bibitem[DP11]{DP} Dupont, Gr\'egoire; Palesi, Fr\'ed\'eric, {\em Quasi-cluster algebras from non-orientable surfaces}. Preprint, {\tt arXiv:1105.1560}.

\bibitem[FWW02]{FWW} Freedman, Michael H.; Walker, Kevin; Wang, Zhenghan, {\em Quantum $SU(2)$ faithfully detects mapping class groups modulo center}. Geom. Topol. 6 (2002), 523–539

\bibitem[Go86]{G} Goldman, William M., {\em Invariant functions on Lie groups and Hamiltonian flows of surface group representations}. Invent. Math. 85 (1986), no. 2, 263--302.

\bibitem[GL09]{GL} Guo, Ren; Luo, Feng, {\em Rigidity of polyhedral
surfaces. II}. Geom. Topol.  13  (2009),   1265--1312.

\bibitem[Ka95]{Kassel} Kassel, Christian, {\em Quantum groups}. Graduate Texts in Mathematics, 155. Springer-Verlag, New York, 1995.

\bibitem[KS98]{KS} Korogodski, Leonid I.; Soibelman, Yan S., {\em Algebras of functions on quantum groups}. Part I. Mathematical Surveys and Monographs, 56. American Mathematical Society, Providence, RI, 1998. 

\bibitem[Mo09]{Mondello}  Mondello, Gabriele, {\em Triangulated Riemann surfaces
with boundary and the Weil-Petersson Poisson structure}. J.
Differential Geom. 81 (2009),  391--436.

\bibitem[MW11]{MW} Musiker, Gregg; Williams, Lauren, {\em Matrix formulae and skein relations for cluster algebras from surfaces}. Preprint, {\tt arXiv:1108.3382}.

\bibitem[Pe87]{Penner1}  Penner, Robert C., {\em The decorated Teichm\"{u}ller
space of punctured surfaces}.  Comm. Math. Phys. 113  (1987),
299--339.

\bibitem[Pe92]{Penner2}  --------, {\em Weil-Petersson volumes}. J. Differential Geom. 35 (1992), no. 3, 559--608.

\bibitem[Pr91]{Przytycki} Przytycki, J\'ozef H., {\em Skein modules of $3$--manifolds}.  Bull. Polish Acad. Sci 39 (1991) 91--100.

\bibitem[PS00]{PS} Przytycki, J\'ozef H.; Sikora, Adam S., {\em On skein algebras and $SL_2(\mathbf{C})$-character varieties}. Topology 39 (2000), no. 1, 115--148.

\bibitem[Tu88]{Turaev1} Turaev, Vladimir G., {\em The Conway and Kauffman modules of a solid tours. (Russian)}. Zap. Nauchn. Sem. Leningrad. Otdel. Mat. Inst. Steklov. (LOMI) 167 (1988), Issled. Topol. 6, 79--89, 190; translation in J. Soviet Math. 52 (1990), no. 1, 2799--2805.

\bibitem[Tu91]{Turaev2}  --------, {\em Skein quantization of Poisson algebras of loops on surfaces}. Ann. Sci. \'Ecole Norm. Sup. (4) 24 (1991), no. 6, 635--704.

\bibitem[Wo83]{Wolpert} Wolpert, Scott, {\em On the symplectic geometry of deformations of a hyperbolic surface}.  Ann. of Math. (2) 117 (1983), no.2, 207--234.

\end{thebibliography}
\end{document}